\theoremstyle{plain} 
\newtheorem{lem}{Lemma}[section]
\newtheorem{thm}[lem]{Theorem}
\newtheorem{cor}[lem]{Corollary}
\newtheorem*{cor*}{Corollary}
\newtheorem*{corA'}{Corollary A'}
\newtheorem*{corB'}{Corollary B'}
\theoremstyle{definition}
\newtheorem{defn}[lem]{Definition}
\newtheorem*{defn*}{Definition}
\newtheorem{ex}[lem]{Example}
\newtheorem*{ex*}{Example}
\newtheorem{rem}[lem]{Remark}
\newtheorem*{rem*}{Remark}
\newtheorem*{ass}{Assumptions}
\newtheorem*{quest}{Question}
\theoremstyle{remark}
\DeclareMathOperator{\diam}{diam}
\DeclareMathOperator{\dist}{dist}
\DeclareMathOperator{\Arg}{Arg}
\DeclareMathOperator{\supp}{supp}
\newcommand{\C}{\mathbb C}
\newcommand{\R}{\mathbb R}
\newcommand{\Z}{\mathbb Z}
\newcommand{\N}{\mathbb N}
\newcommand{\D}{\mathbb D}
\newcommand{\BB}{\mathcal B}
\newcommand{\AAA}{\mathcal A}
\newcommand{\KK}{\mathcal K}
\newcommand{\EE}{\mathcal E}
\newcommand{\FF}{\mathcal F}
\newcommand{\II}{\mathcal I}
\newcommand{\ul}{{\underline{\lambda}}}
\renewcommand{\aa}{{\underline{a}}}
\newcommand{\bb}{{\underline{b}}}
\newcommand{\QQ}{\mathcal Q}
\newcommand{\bd}{\partial}
\renewcommand{\Re}{\textup{Re}}
\renewcommand{\Im}{\textup{Im}}
\begin{document}

\title[On the dimension of points which escape to infinity at given rate]{On the dimension of points which escape to infinity at given rate under exponential iteration} 
\date{\today}

\author{Krzysztof Bara\'nski}
\address{Institute of Mathematics, University of Warsaw,
ul.~Banacha~2, 02-097 Warszawa, Poland}
\email{baranski@mimuw.edu.pl}

\author{Bogus{\l}awa Karpi\'nska}
\address{Faculty of Mathematics and Information Science, Warsaw
University of Technology, ul.~Ko\-szy\-ko\-wa~75, 00-661 Warszawa, Poland}
\email{bkarpin@mini.pw.edu.pl}

\thanks{Supported by the National Science
Centre, Poland, grant no 2018/31/B/ST1/02495.}
\subjclass[2010]{Primary 37F10, 37F35, 30D05, 30D15.}

\bibliographystyle{alpha}

\begin{abstract} We prove a number of results concerning the Hausdorff and packing dimension of sets of points which escape (at least in average) to infinity  at a given rate under non-autonomous iteration of exponential maps. In particular, we generalize the results proved by Sixsmith in 2016 and answer his question on annular itineraries for exponential maps.
\end{abstract}

\maketitle

\section{Introduction}\label{sec:intro}

In this paper we study the iteration of \emph{exponential maps}
\[
E_\lambda(z) = \lambda e^z, \qquad z \in \C, \quad \lambda \in \C \setminus \{0\}
\]
and, more generally, the \emph{non-autonomous exponential iteration}
\[
\cdots \circ E_{\lambda_n} \circ \cdots \circ E_{\lambda_1},
\]
where $\lambda_1, \lambda_2, \ldots \in \C \setminus \{0\}$. We study the dimension of sets of points $z \in \C$  which escape to infinity (at least in average) at a prescribed speed, meaning that $a_n \le |E_{\lambda_n} \circ \cdots \circ E_{\lambda_1}(z)| \le b_n$ for  given sequences $a_n, b_n$.

For a transcendental entire map $f\colon \C \to \C$ the \emph{escaping set} $I(f)$ is defined as
\[
I(f) = \{z \in \C: |f^n(z)| \to \infty \text{ as } n \to \infty\},
\]
while the \emph{Julia set} $J(f)$ is the set of points $z \in \C$, where the iterates $f^n$ do not form a normal family in any neighbourhood of $z$. There is a close relationship between the Julia set and escaping set -- the set $J(f)$ is equal to the boundary of $I(f)$, as proved by Eremenko in  \cite{ere-escaping}. Furthermore, Eremenko and Lyubich showed in~\cite{erem-lyub} that for functions $f$ in the class 
\[
\BB = \{\text{transcendental entire maps with a bounded set of critical and asymptotic values}\}, 
\]
in particular for exponential maps, the escaping set is contained in the Julia set, so $J(f) = \overline{I(f)}$. 

The dimension of the Julia sets of transcendental entire functions was first considered by McMullen in \cite{mcmullen-exp}, who proved that all Julia sets of exponential maps have Hausdorff dimension ($\dim_H$) equal to $2$. Since then, the question of the size of the Julia and escaping sets and their dynamically defined subsets has attracted a lot of attention (see, among others, the references mentioned in this section).

In fact, in \cite{mcmullen-exp} it was showed that $\dim_H (A(E_\lambda)) = 2$, where 
\[
A(f) = \{z \in I(f): |f^{n+l}(z)| \ge M^n_f(R), n \in \N, \text{ for some } l \ge 0\}
\]
is the \emph{fast escaping set} of $f$, introduced by Bergweiler and Hinkkanen in \cite{BergHink} and then studied by Rippon and Stallard in \cite{RS-fast}. Here $R > 0$ is a large fixed number, $M_f(r) = \max_{|z| = r} |f(z)|$ for $r > 0$ and $M_f^n$ denotes the $n$-th iterate of $M_f(\cdot)$. In fact, results by Bergweiler, Karpi\'nska and Stallard \cite{BKS} and Rippon and Stallard \cite{RS-quitefast} imply that $\dim_H A(f) = 2$ for all transcendental entire $f \in \mathcal B$ of finite order or `not too large' infinite order. It is then a natural question to determine the dimension of subsets of $J(f) \cap I(f)$ consisting of points escaping to infinity at a slower rate, or other dynamically defined subsets of $J(f) \cap I(f) \setminus A(f)$. A number of such sets, including 
\emph{slow escaping set}
\[
L(f) = \Big\{z \in I(f) : \limsup_{n\to\infty} \frac 1 n \log |f^n(z)| < \infty\Big\} 
\]
and \emph{moderately slow escaping set}
\[
M(f) = \Big\{z \in I(f) : \limsup_{n\to\infty} \frac 1 n \log\log |f^n(z)| < \infty\Big\} 
\]
have been defined and studied in recent years (see e.g.~\cite{RS-quitefast, RS-slow}).

\begin{rem}[{\bf Topological structure}{}] It is well-known (see e.g.~\cite{DK,DT,aarts,SZ}) that escaping sets of exponential maps contain disjoint hairs (simple curves converging to $\infty$ with some special properties). For exponential maps with an attracting fixed point and, more generally, for maps of finite order from the class $\BB$ with a unique Fatou component, the Julia set is the union of hairs together with their endpoints (see~\cite{B-paradox,hairs,RRRS}). In \cite{RempeStallard-fast} Rempe, Rippon and Stallard showed that for all transcendental entire $f \in \mathcal B$ of finite order, the hairs without endpoints are contained in $A(f)$. Therefore, for exponential maps with an attracting fixed point, the set $I(E_\lambda) \setminus A(E_\lambda)$ is contained in the union of endpoints of the hairs. 
\end{rem}

\begin{rem}[{\bf Points with bounded trajectories}{}] Let $J_{bd}(f)$ denote the set of points in the Julia set of $f$ with bounded trajectories. In \cite{B-area} it is proved that the Hausdorff dimension of $J_{bd}(E_\lambda)$ is larger than $1$. Furthermore, in \cite{UZ-finer} it is showed that $\dim_H(J(E_\lambda) \setminus I(E_\lambda)) \in (1,2)$ for all hyperbolic exponential maps $E_\lambda$.
More generally, $\dim_H(J_{bd}(f)) > 1$ for every transcendental entire map in the class $\BB$ (see \cite{BKZ-tracts}) and $\dim_H(J(f) \setminus (I(f) \cup J_{bd}(f)) > 1$ for every transcendental entire map $f$ in the class $\BB$ (see \cite{OS}).
\end{rem}

To conduct a refined analysis of the sets of points with given escape rate, for a transcendental entire map $f$ and sequences $\aa = (a_n)_{n=1}^\infty$, $\bb = (b_n)_{n=1}^\infty$ with $0 < a_n \le b_n$ let
\begin{align*}
I_\aa^\bb(f) &= \{z \in \C: a_n \le |f^n(z)| \le b_n \text{ for every sufficiently large } n \in \N\}\\
I^\bb(f) &= \{z \in \C: |f^n(z)| \le b_n \text{ for every sufficiently large } n \in \N\}.
\end{align*}
To guarantee that the sets $I_\aa^\bb(f)$ are not empty, one usually assumes that the sequence $\aa$ is \emph{admissible}, which roughly means $a_{n+1} < M_f(a_{n})$ (with a precise definition depending on the context). 

Surprisingly, a natural question of determining the dimension of the sets $I_\aa^\bb(f)$ has not been answered completely even for the well-known exponential family. Let us summarize what is known about the size of the sets $I_\aa^\bb(E_\lambda)$ and, more generally, the sets $I_\aa^\bb(f)$ for $f \in \BB$. In \cite{Rempe-exp} Rempe proved that $I_\aa^\bb(E_\lambda) \neq \emptyset$ for every admissible sequence $\aa = (a_n)_{n=1}^\infty$ with $a_n \to\infty$ and $b_n = ca_n$, $c > 1$. The result was generalized by Rippon and Stallard in \cite{RS-slow} to the case of arbitrary transcendental entire (or meromorphic) maps $f$. Moreover, they showed that if $b_n \to \infty$, then $I^{\bb}(f) \cap I(f) \neq \emptyset$. In \cite{BP} Bergweiler and Peter proved that $\dim_H (I(f) \cap I^{\bb}(f)) \ge 1$ for every transcendental entire map $f$ in the class $\BB$, provided $b_n \to \infty$. 

In \cite{KU} Karpi\'nska and Urba\'nski, considering a related topic, studied the Hausdorff dimension of subsets of the escaping set for exponential maps consisting of points whose symbolic itineraries (describing of the imaginary part of $E_\lambda^n(z)$) grow in modulus to infinity at a given rate. They found that the Hausdorff dimension of these sets can achieve any number in the interval $[1,2]$. As noted in \cite{sixsmith}, the subsets of $I(E_\lambda)$ considered in \cite{KU} are contained in the fast escaping set $A(E_\lambda)$.

A motivation for our work was the paper \cite{sixsmith} by Sixsmith, who proved several results on the dimension of the sets $I_\aa^\bb(E_\lambda)$. In particular, he showed that $\dim_H I_\aa^\bb(E_\lambda) = 1$ in the following cases:
\begin{itemize}
\item[(a)] $a_n = c_1 R^n$, $b_n = c_2 R^n$ for $c_1, c_2 > 0$, $R > 1$,
\item[(b)] $a_n = n^{(\log^+)^p(n)}$, $b_n = R^n$ for $p \in \N$, $R > 1$, where $(\log^+)^p$ denotes the $p$-th iterate of $\log^+ = \max(\log, 0)$,
\item[(c)] $a_n = e^{n {\log^+}^p(n)}$, $b_n = e^{e^{pn}}$ for $p \in \N$,
\item[(d)] $\lim_{n\to\infty}a_n = \infty$, $a_{n+1} < R^{a_n^{(1/\log R)}}$, $\lim_{n\to\infty}\frac{\log a_{n+1}}{\log(a_1 \cdots a_n)} = 0$ and $b_n = Ra_n$ for large $n$, where $R > 1$ is a sufficiently large constant. 
\end{itemize}
Note that in the cases (a)--(b) the sets $I_\aa^\bb(E_\lambda)$ are contained in the slow escaping set $L(E_\lambda)$, while in the cases (c)--(d) they are subsets of the moderately slow escaping set $M(E_\lambda)$.
 
In \cite[Remark 2]{sixsmith} the author stated a question, whether the condition in (d) can be weakened. In this paper we answer this question, extending the results described in (a)--(d) and proving a number of facts concerning the dimension of points with given escape rate. The results are presented in a more general settings of non-autonomous iteration 
\[
E_\ul = (E_{\lambda_n} \circ \cdots \circ E_{\lambda_1})_{n=1}^\infty
\]
of exponential maps, with an arbitrary choice of $\lambda_n \in \C \setminus \{0\}$.
Furthermore, the points under consideration are not necessarily escaping. Generally, we only assume that $(a_n)_{n=1}^\infty$ is admissible, $a_n > a$ for large $a$, $(a_1 \cdots a_n)^{1/n} \to \infty$ as $n \to \infty$,  and $b_n \ge c a_n$ for $c > 1$.

Let us summarize the main results of the paper. The exact formulations are contained in Section~\ref{sec:results}.

\begin{itemize}
\item 
In Theorem~\ref{thm:A} we present a general condition which implies that the Hausdorff dimension of $I_\aa^\bb(E_\ul)$ is at most $1$.

\item
In Theorem~\ref{thm:B} and Corollary~\ref{cor:dim} we provide basic estimates for the Hausdorff and packing dimension of $I_\aa^\bb(E_\ul)$ in terms of the growth of moduli of the annuli $\{z\in \C: \{a_n \le |z| \le b_n\}$  compared to the mean geometric growth of the sequences $(a_n)_{n=1}^\infty$, $(b_n)_{n=1}^\infty$.

\item
Corollary~\ref{cor:1-2} provides conditions under which the dimensions achieve extremal values $1$ or $2$.

\item
In Theorem~\ref{thm:any_a}, generalizing the results of \cite{sixsmith}, we show that the sets $I_\aa^\bb(E_\ul)$ with moderately slow escape rate have Hausdorff dimension $1$.

\item Theorem~\ref{thm:rate} shows the same for the sets of points with any given exact growth rate.

\item In Theorem~\ref{thm:thin} we provide exact formulas for the Hausdorff and packing dimension of $I_\aa^\bb(E_\ul)$  in the case $\sup_n |\lambda_n| < \infty$, $\frac{\log b_n}{\log a_n} \to 1$.
 
\item
In Theorem~\ref{thm:examples} we show that the packing dimension of $I_\aa^\bb(E_\ul)$ can achieve any value in the interval $[1,2]$, with the Hausdorff dimension being equal to $1$. 
\end{itemize}

At the end of Section~\ref{sec:results} we state a question, which we find interesting, whether there exists a set $I_\aa^\bb(E_\ul)$ with the Hausdorff dimension between $1$ and $2$.

In \cite{sixsmith}, the mentioned result~(d) was described in the language of \emph{annular itineraries}, which are the sequences of non-negative integers $s_n$ defined by the condition $f^n(z) \in \AAA_{s_n}$ for a partition of the plane by a sequence of concentric annuli $\AAA_s$, $s \ge 0$, with radii growing to infinity as $s \to \infty$. In \cite{RS-annular} Rippon and Stallard proved that for all transcendental entire maps $f$ there exist escaping points with any given admissible annular itinerary.
In Section~\ref{sec:annular} of this paper we also take up this approach, determining the dimension of sets of points sharing given annular itinerary under non-autonomous exponential iteration for various sequences of the annuli $\AAA_s$ (Theorems~\ref{thm:annular} and~\ref{thm:annular2}).

\section{Results}\label{sec:results}

\subsection{Preliminaries}\label{subsec:prelim}

We consider a non-autonomous exponential iteration 
\[
E_\ul = (E_{\lambda_n} \circ \cdots \circ E_{\lambda_1})_{n=1}^\infty
\]
for $\ul = (\lambda_n)_{n=1}^\infty$, where $\lambda_n \in \C \setminus \{0\}$. 
We extend the definition of the sets $I_\aa^\bb(f)$ to the non-autonomous setup, setting
\[
I_\aa^\bb(E_\ul) = \{z \in \C: a_n \le |E_{\lambda_n} \circ \cdots \circ E_{\lambda_1}(z)| \le b_n \text{ for every sufficiently large } n \in \N\}.
\]
for $\aa = (a_n)_{n=1}^\infty$, $\bb = (b_n)_{n=1}^\infty$ with $0 < a_n < b_n$. Note that, in general, the sequences $a_n$ and $b_n$ need not be increasing and need not tend to infinity. We denote by $\Delta_n$ the (suitably normalized) modulus of the annulus $\{z \in \C: a_n \le |z| \le b_n\}$, i.e.
\[
\Delta_n = \log \frac{b_n}{a_n}.
\]
Our results concern the Hausdorff and packing dimension (see e.g.~\cite{falconer,mattila} for definitions), which are denoted, respectively, by $\dim_H$ and $\dim_P$. Recall that 
\[
\dim_H \le \dim_P.
\]

\subsection{General estimates}\label{subsec:estimates}

Our first result provides an upper estimate of the Hausdorff dimension
of the sets $I_\aa^\bb(E_\ul)$. Geometrically, it states that $\dim_H I_\aa^\bb(E_\ul)$ can be larger than $1$ only if the moduli $\Delta_n$ grow quickly enough compared to the mean geometric growth of the sequence $\aa$.
The proof is contained in Section~\ref{sec:proof-A}.

\begin{thm} \label{thm:A} Let $\aa = (a_n)_{n=1}^\infty$, $\bb = (b_n)_{n=1}^\infty$ be such that $\inf_{n\in\N} a_n > 0$ and
\begin{equation}\label{eq:assA}
\liminf_{n\to\infty}\left(\frac{\Delta_{n+1}}{a_1 \cdots a_n}\right)^{\frac 1 n} =0.
\end{equation}
Then $\dim_H I_\aa^\bb(E_\ul) \leq  1$.
\end{thm}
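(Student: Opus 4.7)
The plan is a direct Hausdorff-dimension covering argument. To show $\dim_H I_\aa^\bb(E_\ul)\le 1$, I would argue that for every $s > 1$ the $s$-Hausdorff measure vanishes, by constructing covers whose sums $\sum(\mathrm{diam})^s$ tend to zero along the subsequence $n_k$ realising the $\liminf$ hypothesis. As a preliminary reduction, I would write $I_\aa^\bb(E_\ul) = \bigcup_{N\ge 1}\{z : |f^n(z)|\in [a_n,b_n]\text{ for all }n\ge N\}$ where $f^n = E_{\lambda_n}\circ\cdots\circ E_{\lambda_1}$; by countable union and the fact that $f^{N-1}$ is entire and locally biholomorphic off a discrete set, it is enough to handle $N=1$, for which the shifted sequences still satisfy the $\liminf$ condition.

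For the main construction, I would set up cells at level $n$: for $\sigma=(m_1,\ldots,m_n)\in\Z^n$, consider the inverse branches $\psi_{k,m}(w) = \log(w/\lambda_k)+2\pi i m$ of $E_{\lambda_k}$ and their composition $\Psi_\sigma = \psi_{1,m_1}\circ\cdots\circ\psi_{n,m_n}$, which maps $A_n$ biholomorphically onto a cell $R_\sigma\subset\C$ with $f^n\circ\Psi_\sigma = \id$ and $|\Psi_\sigma'(w)| = \prod_{k=1}^n |v_k|^{-1}$, where the $v_k\in A_k$ trace the orbit. After fixing the initial branch $m_1$ (so as to work inside the bounded strip $T_{1,m_1}$, with a countable union over $m_1$ at the end), I would cover $I_\aa^\bb\cap T_{1,m_1}$ by pullbacks under the $\Psi_\sigma$ of a grid of squares of side $\Delta_{n+1}$ covering the narrow region $A_n\cap S_{n+1}$, where $S_{n+1} = E_{\lambda_{n+1}}^{-1}(A_{n+1})$ is the vertical strip of width $\Delta_{n+1}$ encoding the next-level annular constraint. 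Koebe distortion, applied on slight enlargements of the cells so that $\Psi_\sigma$ extends univalently, then bounds each pulled-back square by $C\,\Delta_{n+1}/(a_1\cdots a_n)$.

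Next I would count and sum. Using that at each step the admissible $m_k$ correspond to an imaginary-part range of total length at most $2\sqrt{b_{k-1}^2 - a_{k-1}^2}$, and using the cover just described, the hope is to arrive at a bound of the schematic form $\sum(\mathrm{diam})^s \le C^n\,E_n\,\eta_n^{n(s-1)}$, where $\eta_n = (\Delta_{n+1}/(a_1\cdots a_n))^{1/n}\to 0$ along the subsequence and $E_n$ collects the remaining geometric factors from the counting. Since $\eta_n^{n(s-1)}$ decays faster than exponentially, for $s>1$ fixed the sum tends to $0$ along the subsequence, giving $\dim_H(I_\aa^\bb\cap T_{1,m_1})\le s$ and, after the countable union over $m_1$ and $s\downarrow 1$, $\dim_H I_\aa^\bb(E_\ul)\le 1$.

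The hard part, I expect, is to arrange the counting step so that the factor $E_n$ does not overwhelm the decay $\eta_n^{n(s-1)}$. A crude product bound $\prod_{k<n} b_k/\pi$ on the number of cells is easily derailed by $e^{\sum \Delta_k}$ when the moduli $\Delta_k$ are permitted to grow, whereas the $\liminf$ hypothesis by itself controls only the single ratio at step $n+1$. The sharp count must exploit the coupling between successive inverse-branch choices --- the constraint $v_{k-1}\in A_{k-1}$ cutting down the admissible $m_k$ well below what a naive $b_{k-1}$-style bound sees --- or, equivalently, replace the pointwise supremum bound on $|\Psi_\sigma'|$ by an averaged one, perhaps through a transfer-operator estimate, a Perron--Frobenius-type argument, or an area-distortion computation on the iterated preimage structure. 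This is where I anticipate the real technical work of the proof to lie.
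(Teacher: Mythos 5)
Your skeleton is the same as the paper's: reduce by countable union to a tail set, cover it by pullbacks, under compositions of inverse logarithm branches along the orbit, of a grid of squares of side $\Delta_{n+1}$ filling the strip $S_{n+1}$, bound diameters by $c^n\Delta_{n+1}/(a_1\cdots a_n)$ via bounded distortion, and let the $\liminf$ hypothesis kill $\sum(\diam)^D$ along a subsequence for each fixed $D>1$. But the step you explicitly leave open --- making the branch count not overwhelm the decay --- is the entire content of the proof, and your diagnosis of what is needed overshoots: no transfer operator, Perron--Frobenius argument, or averaged derivative estimate is required. The resolution is an exact bookkeeping device. Each cell at level $m$ is indexed by a \emph{radial} index $j_m$ (the cell maps onto a half-annulus of modulus $\asymp e^{j_m\delta}$, with $e^{-\delta}a_m<e^{j_m\delta}<b_m$) and an \emph{angular} index $k_m$. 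For \emph{fixed} radial indices $j_1,\dots,j_n$, the number of admissible angular indices at step $m$ is $\lesssim e^{j_{m-1}\delta}$ (the imaginary part of the previous orbit point is bounded by its modulus, not by $b_{m-1}$), and this cancels exactly one of the $D$ powers of $e^{-j_{m-1}\delta}$ contributed by $(\diam)^D$; together with the count $\lesssim e^{j_n\delta}/\Delta_{n+1}$ of final squares one gets
\[
\sum_{k_1,\dots,k_n,l}(\diam Q)^D \;\le\; c^n\left(\frac{\Delta_{n+1}}{e^{(j_1+\cdots+j_n)\delta}}\right)^{D-1}.
\]
The remaining sum over each radial index $j_m$ is then a geometric series with ratio $e^{-(D-1)\delta}<1$, hence dominated by its largest term $\asymp a_m^{-(D-1)}$ --- so the number of radial shells at step $m$, i.e.\ the size of $\Delta_m$ for $m\le n$, never enters the bound at all, and one arrives at $P^{(n)}\le c^n\bigl(\Delta_{n+1}/(a_1\cdots a_n)\bigr)^{D-1}$, which is $<2^{-n}$ infinitely often by hypothesis.

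In short, the condition $D>1$ is used exactly twice: once so that the angular count $e^{j_{m-1}\delta}$ is absorbed by the $D$-th power of the contraction with a factor $e^{-(D-1)j_{m-1}\delta}$ to spare, and once so that the radial geometric series converges. Your instinct that the ``coupling between successive inverse-branch choices'' must be exploited is correct --- the admissible angular count at step $m$ equals the modulus of the orbit at step $m-1$, which is precisely the local expansion factor --- but the cancellation is pointwise and combinatorial, not an averaged or spectral estimate. Without supplying this stratified count, your proposal does not yet prove the theorem; with it, the rest of your outline (distortion on enlarged domains, reduction to a fixed initial cell, letting $D\downarrow 1$) goes through as you describe. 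One further small caution: the inverse branches $\psi_{k,m}$ are not single-valued on a full annulus, so the cells must be half-annuli (or sectors) as in the paper's sets $U_{j,k}$, which is also what makes the angular count at each step well defined.
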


\begin{rem}\label{rem:A} 
It is straightforward to check that \eqref{eq:assA} holds provided
\[
\lim_{n\to\infty} (a_1 \cdots a_n)^{1/n} = \infty
\quad\text{and} \quad
\liminf_{n\to\infty} \frac{\log \Delta_{n+1}}{\log(a_1 \cdots a_n)}<1
\]
or
\[
\limsup_{n\to\infty} (a_1 \cdots a_n)^{1/n} = \infty
\quad\text{and} \quad
\limsup_{n\to\infty} \frac{\log \Delta_{n+1}}{\log(a_1 \cdots a_n)}<1.
\]
\end{rem}

Before formulating next results, we introduce the notion of admissibility used in our context. Recall that this condition, bounding the growth of the sequences $(a_n)_{n=1}^\infty$, $(b_n)_{n=1}^\infty$, is introduced to ensure that the sets $I_\aa^\bb(E_\ul)$ under consideration are non-empty. 

\begin{defn}\label{defn:admis}
We say that sequences $\aa= (a_n)_{n=1}^\infty$, $\bb = (b_n)_{n=1}^\infty$ are \emph{admissible}, if for sufficiently large $n$ we have 
\[
a_{n+1} \le |\lambda_{n+1}|e^{q a_n}, \qquad b_{n+1} \ge|\lambda_{n+1}| e^{-q a_n}
\]
for a constant $0 < q < 1$. 
\end{defn}

From now on, our general assumptions will be the following.

\begin{ass}

\

\begin{itemize}
\item[(a)]  The sequences $\aa = (a_n)_{n=1}^\infty, \bb = (b_n)_{n=1}^\infty$ are admissible.
\item[(b)] $(a_1 \cdots a_n)^{1/n} \to \infty$ as $n \to \infty$.
\item[(c)] $\Delta_n > \Delta > 0$ for $n \in \N$.
\item[(d)] $\liminf_{n \to \infty} a_n > a$, where $a$ is a sufficiently large constant, depending on $\Delta$ and $q$ from Definition~\ref{defn:admis}. 
\end{itemize}
\end{ass}
\begin{rem}

Note that if $a_n \to \infty$ and the sequence $|\lambda_n|$ is bounded away from $0$ and $\infty$, then the assumptions reduce to $\Delta_n > \Delta > 0$ and $a_{n+1} \le e^{q a_n}$, $0 < q < 1$, for large $n$. 
\end{rem}

The next result provides general lower and upper estimates of the Hausdorff and packing dimension of the sets $I_\aa^\bb(E_\ul)$ in terms of the growth of the moduli $\Delta_n$  compared to the growth of the sequences $\aa$ and $\bb$. The proof is contained in Sections~\ref{sec:proof-B-prelim}--\ref{sec:proof-B-below}.

\begin{thm}\label{thm:B} Suppose that the assumptions $($a$)$--$($d$)$ are satisfied. Then
\begin{alignat*}{3}
&1 + \inf_x\liminf_{n \to \infty}\phi_n(x) &\le \dim_H I_\aa^\bb(E_\ul) &\le 1 + \sup_x\liminf_{n \to \infty}\phi_n(x),\\
&1 + \inf_x \limsup_{n \to \infty}\psi_n(x) &\le \dim_P I_\aa^\bb(E_\ul) &\le 1 + \sup_x \limsup_{n \to \infty}\psi_n(x),
\end{alignat*}
where $x = (x_1, x_2 , \ldots) \in [a_1, b_1] \times [a_2, b_2] \times \cdots$ and
\begin{align*}
\phi_n(x) &= \frac{\log\big(\min(\Delta_2, x_1) \cdots \min(\Delta_n, x_{n-1})\big)}{\log(x_1 \cdots x_n) - \log\min(\Delta_{n+1}, x_n)},\\
\psi_n(x) &= \frac{\log\big( \min(\Delta_2, x_1) \cdots \min(\Delta_{n+1}, x_n)\big)}{\log(x_1  \cdots x_n)}.
\end{align*}
\end{thm}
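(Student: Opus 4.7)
The plan is to realize $I_\aa^\bb(E_\ul)$ as a Cantor-like fractal generated by nested pullbacks of the annuli $A_n=\{a_n\le|w|\le b_n\}$. The preimage of $A_{n+1}$ under $E_{\lambda_{n+1}}$ is the vertical strip $S_{n+1}=\{\log(a_{n+1}/|\lambda_{n+1}|)\le\Re w\le\log(b_{n+1}/|\lambda_{n+1}|)\}$ of width $\Delta_{n+1}$ in the $w_n$-plane, and restricting to one fundamental strip of $E_{\lambda_{n+1}}$ yields a rectangle of shape $\Delta_{n+1}\times 2\pi$. An \emph{itinerary} is a sequence of such fundamental-strip choices, and pulling back the resulting nested rectangles through $F_n=E_{\lambda_n}\circ\cdots\circ E_{\lambda_1}$ produces level-$n$ cells in the $z$-plane. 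Using $|E_\lambda'|=|E_\lambda|$ together with the chain rule, $|F_n'(z)|=|w_1|\cdots|w_n|$ with $w_j=F_j(z)$, so every level-$n$ cell is a nearly conformal copy of its level-$n$ rectangle contracted by $(x_1\cdots x_{n-1})^{-1}$, where $x_j=|w_j|\in[a_j,b_j]$.

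The factor $\min(\Delta_{j+1},x_j)$ captures the effective geometry of the level-$(j+1)$ rectangle inside $A_j$ near radius $x_j$: for $\Delta_{j+1}\le x_j$ the strip $S_{j+1}$ sits inside the annulus and the cell has full horizontal extent $\Delta_{j+1}$, while for $\Delta_{j+1}>x_j$ the strip overshoots $A_j$ and only a portion of size $\asymp x_j$ remains usable near the circle $|w|=x_j$. A parallel count shows that the branching factor (the number of fundamental-strip indices whose rectangle meets $A_j$ near radius $x_j$) is $\asymp x_j$. Consequently each level-$(n+1)$ cell in the $z$-plane is, up to multiplicative constants absorbed by admissibility, a rectangle with short side $\asymp\min(\Delta_{n+1},x_n)/(x_1\cdots x_n)$ and long side $\asymp 2\pi/(x_1\cdots x_n)$, with branching factor $\asymp x_n$ at step $n+1$. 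The denominator of $\phi_n(x)$ is then $-\log$ of the short side, the numerator is the total log-branching, and the leading $+1$ comes from the $2\pi$-lattice which always contributes full dimension in the vertical direction.

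For the lower bounds I would, for each $x$, single out the sub-Cantor set of orbits with $|F_n(\cdot)|\approx x_n$, equip it with a Borel probability measure distributing mass approximately uniformly over admissible sub-cells at each step, and invoke the mass distribution principle to obtain $\dim_H\ge 1+\liminf_n\phi_n(x)$, which implies $\dim_H\ge 1+\inf_x\liminf_n\phi_n(x)$. For the upper bounds I would cover $I_\aa^\bb(E_\ul)$ by the collection of level-$n$ cells stratified by approximate itinerary type $x$, estimate the $s$-dimensional Hausdorff (respectively packing) pre-measure using the elongated shape of the cells, and note that $\sup_x$ over itineraries controls the bound. The packing-dimension estimates follow the same scheme with $\limsup_n\psi_n(x)$ in place of $\liminf_n\phi_n(x)$, consistently with the standard dichotomy between the two dimensions.

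The main obstacle is the careful counting in the $w_n$-plane: one needs estimates, uniform in $x_n$, for the number of fundamental-strip indices whose rectangle meets $A_n$ near radius $x_n$, and must control edge effects both when $S_{n+1}$ extends near or beyond $A_n$ and when $x_n$ approaches the inner radius $a_n$. A secondary technical task is to make the continuous optimization over $x\in\prod[a_j,b_j]$ precise: I would discretize each factor in log-scale and construct orbits with radii lying inside each resulting product cell, so that $\inf_x$ and $\sup_x$ are realized (in the limit) by genuinely admissible orbits.
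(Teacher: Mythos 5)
Your outline follows the same route as the paper in its broad strokes: pass to logarithmic coordinates where the preimage of the annulus $\{a_{n+1}\le|w|\le b_{n+1}\}$ is a vertical strip of width $\Delta_{n+1}$, tile it by fundamental domains of the exponential, pull back along inverse branches whose derivatives are $\asymp (x_1\cdots x_n)^{-1}$, count $\asymp x_n$ choices in the imaginary direction and $\asymp\min(\Delta_{n+1},x_n)$ in the real direction, and then use a mass distribution argument. However, there is a genuine gap at the quantitative heart of the theorem. Your sketch only accounts for ball radii comparable to cell diameters, and at those scales the natural ratio $\log\mu(\D(z,r))/\log r$ produces $1+\psi_n(x)$, not $1+\phi_n(x)$: nothing in your argument generates the term $-\log\min(\Delta_{n+1},x_n)$ in the denominator of $\phi_n$. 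That term comes from \emph{intermediate} radii between the level-$n$ and level-$(n+1)$ cell sizes. Because the admissible level-$(n+1)$ cells inside a level-$n$ cell cluster in a strip of relative width $\min(\Delta_{n+1},x_n)/x_n$, the count of sub-cells inside $\D(z,r)$ switches from quadratic to linear in $r$ at the crossover radius $r\asymp\min(\Delta_{n+1},x_n)/(x_1\cdots x_n)$, and it is exactly there that the lower local density of the measure is minimized. The paper isolates this in the piecewise function $h_n$ and Lemma~\ref{lem:h}, whose infimum over the scale range yields $\phi_n$ and whose supremum yields $\psi_n$; this is also the only reason the Hausdorff and packing formulas differ. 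Without this step your mass distribution argument would prove a lower bound that is false in general (too large), and your upper bound would not match the stated $\phi_n$.

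A second, related problem is your treatment of the upper bounds. Covering $I_\aa^\bb(E_\ul)$ by level-$n$ cells and summing $(\diam)^s$ is how the paper proves Theorem~\ref{thm:A}, but for Theorem~\ref{thm:B} a single global cover at level $n$ can only control $\liminf_n\sup_x$, whereas the claim involves $\sup_x\liminf_n$, where the subsequence of good levels depends on the itinerary $x$; and ``covering and estimating the packing pre-measure'' is not a coherent way to bound packing dimension from above, since packing pre-measure is defined via packings, not covers. The paper instead constructs, for each initial box, a Frostman-type measure $\mu$ whose support contains the corresponding piece of $I_\aa^\bb(E_\ul)$, bounds $\mu(\D(z,r))$ from \emph{below} for every point of the set and every small $r$ (again via the intermediate-scale analysis), and applies the standard local-density criteria for both $\dim_H$ and $\dim_P$; the lower bounds use a second, separated construction (even indices only, boxes compactly contained in their targets) with the reverse inequalities. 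You would also need the bounded-distortion statement of Lemma~\ref{lem:branch} and the observation that the resulting $c^{\pm n}$ multiplicative errors are killed by assumption (b), $(a_1\cdots a_n)^{1/n}\to\infty$ --- not by admissibility, as you suggest.
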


Theorem~\ref{thm:A} and \ref{thm:B}  
imply a number of corollaries, presented below. The first one shows, among others, that the Hausdorff dimension of the considered sets $I_\aa^\bb(E_\ul)$ is at least $1$.

\begin{cor}\label{cor:dim} Under the assumptions of Theorem~{\rm\ref{thm:B}}, 
\begin{align*}
1 &\le \dim_H I_\aa^\bb(E_\ul) \le 1 + \liminf_{n\to\infty}\frac{\log\big(\Delta_1\cdots  \Delta_n\big)}{\log(a_1\cdots a_{n-1}) +\log^+ (a_n/\Delta_{n+1})},\\
1 &\le \dim_P I_\aa^\bb(E_\ul) \le 1 + \limsup_{n\to\infty}\frac{\log\big(\Delta_1\cdots  \Delta_{n+1}\big)}{\log(a_1\cdots a_n)}.
\end{align*}
If, additionally, 
\begin{equation}\label{eq:dimm}
\sup_{n\in\N} \frac{\Delta_{n+1}}{a_n} < \infty,
\end{equation}
then
\[
\dim_H I_\aa^\bb(E_\ul) = 1,\qquad
\dim_P I_\aa^\bb(E_\ul) \ge 1 + \limsup_{n\to\infty}\frac{\log\big(\Delta_1\cdots \Delta_{n+1}\big)}{\log(b_1\cdots b_n)}.
\]
\end{cor}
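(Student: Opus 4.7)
The plan is to derive all four bounds from Theorems~\ref{thm:A} and~\ref{thm:B}: the upper bounds by bounding the numerators of $\phi_n(x), \psi_n(x)$ above and their denominators below uniformly in $x$; the lower bounds $\dim \ge 1$ by a superlinearity argument using~(b) and~(c); and the two statements under~\eqref{eq:dimm} by verifying the hypothesis of Theorem~\ref{thm:A} and running a subsequence argument in Theorem~\ref{thm:B}.

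\textbf{Upper bounds.} Since $\min(\Delta_i, x_{i-1}) \le \Delta_i$, the numerator of $\phi_n(x)$ is at most $\log(\Delta_2\cdots\Delta_n)$ and that of $\psi_n(x)$ at most $\log(\Delta_2\cdots\Delta_{n+1})$. I would rewrite the denominator of $\phi_n(x)$ as $\log(x_1\cdots x_{n-1}) + \log^+(x_n/\Delta_{n+1})$ and minimise each summand at $x_i = a_i$, obtaining the lower bound $\log(a_1\cdots a_{n-1}) + \log^+(a_n/\Delta_{n+1})$; the denominator of $\psi_n(x)$ is trivially $\ge \log(a_1\cdots a_n)$. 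A case split on the sign of the numerator then yields $\phi_n(x) \le \max(0, R_n^\phi)$ uniformly in $x$, where $R_n^\phi$ is the ratio of the above estimates, and analogously $\psi_n(x) \le \max(0, R_n^\psi)$. Combining $\sup\liminf \le \liminf\sup$ with the identities $\liminf \max(0, c_n) = \max(0, \liminf c_n)$ and $\limsup \max(0, c_n) = \max(0, \limsup c_n)$, and the observation (verified in the next paragraph) that the relevant liminf/limsup of $R_n^\phi, R_n^\psi$ is $\ge 0$, produces the claimed upper bounds. The additive constant $\log\Delta_1$ in the numerator is absorbed because the denominators tend to infinity superlinearly by~(b).

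\textbf{Lower bounds $\dim_H, \dim_P \ge 1$.} Fix any $x$. By~(c) and~(d), for $i$ sufficiently large $\min(\Delta_i, x_{i-1}) \ge \min(\Delta, a_{i-1}) = \Delta > 0$, so the numerators of $\phi_n(x), \psi_n(x)$ are $O(n)$ in absolute value, while by~(b) the denominators are $\omega(n)$. Hence $\phi_n(x), \psi_n(x) \to 0$ (possibly from below when $\Delta < 1$), giving $\liminf \phi_n(x) \ge 0$ and $\limsup \psi_n(x) \ge 0$ for every $x$. Taking $\inf_x$ and invoking Theorem~\ref{thm:B} yields $\dim_H, \dim_P \ge 1$.

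\textbf{Refinements under~\eqref{eq:dimm}.} Set $C' = \max(1, \sup_n \Delta_{n+1}/a_n) < \infty$. Then $(\Delta_{n+1}/(a_1\cdots a_n))^{1/n} \le (C')^{1/n}/(a_1\cdots a_{n-1})^{1/n} \to 0$ by~(b), so Theorem~\ref{thm:A} applies and gives $\dim_H \le 1$, hence $\dim_H = 1$. For the packing-dimension refinement, set $L = \limsup \log(\Delta_1\cdots\Delta_{n+1})/\log(b_1\cdots b_n)$. The case $L \le 0$ is absorbed in $\dim_P \ge 1$. If $L > 0$, \eqref{eq:dimm} yields $\min(\Delta_i, x_{i-1}) \ge \Delta_i/C'$ for every $x$ (distinguishing $\Delta_i \le x_{i-1}$ from $\Delta_i > x_{i-1}$, the latter using $x_{i-1} \ge a_{i-1} \ge \Delta_i/C'$), so the numerator of $\psi_n(x)$ is at least $\log(\Delta_2\cdots\Delta_{n+1}) - n\log C'$. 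Along a subsequence $n_k$ realising $L$, the correction $(n_k\log C')/\log(b_1\cdots b_{n_k}) \to 0$ by~(b), so this lower bound is eventually positive; combined with $\log(x_1\cdots x_{n_k}) \le \log(b_1\cdots b_{n_k})$, this gives $\psi_{n_k}(x) \ge L - o(1)$ uniformly in $x$. Hence $\inf_x \limsup \psi_n \ge L$, and Theorem~\ref{thm:B} yields $\dim_P \ge 1 + L$. The principal subtlety is the separate treatment of $L \le 0$: without it, the additive $n\log C'$ correction would prevent the ratio $\log(\Delta_2\cdots\Delta_{n+1})/\log(b_1\cdots b_n)$ from appearing cleanly as a lower bound on $\psi_n$.
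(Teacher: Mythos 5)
Your proposal is correct and follows essentially the same route as the paper: the bound $\dim \ge 1$ comes from the numerators of $\phi_n,\psi_n$ being bounded below by $Cn$ while the (positive) denominators grow superlinearly, and the remaining assertions are obtained from Theorems~\ref{thm:A} and~\ref{thm:B} by exactly the kind of uniform-in-$x$ estimates you supply (the paper declares these ``straightforward''; your direct verification of \eqref{eq:assA} under \eqref{eq:dimm} even bypasses Remark~\ref{rem:A} cleanly). One harmless overstatement: the numerators of $\phi_n(x),\psi_n(x)$ are not $O(n)$ in absolute value in general (only bounded below by $Cn$, since $\min(\Delta_i,x_{i-1})$ can be large), so $\phi_n(x),\psi_n(x)$ need not tend to $0$; but the one-sided bound is all you actually use, and the conclusions $\liminf_n\phi_n(x)\ge 0$ and $\limsup_n\psi_n(x)\ge 0$ still follow.
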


\begin{proof} Note first that by assumptions, $\log(a_1 \cdots a_{n-1}) > 0$ and 
$\min(\Delta_n, a_{n-1}) \ge c$ for large $n$ and some constant $c > 0$. Hence, the numerator in the expression for $\phi_n$ in Theorem~\ref{thm:B} is larger than $C n$ for a constant $C \in \R$, while the denominator is not smaller than $\log(a_1 \cdots a_{n-1})$, which is positive. Thus, 
\[
\phi_n \ge -\frac{|C|n}{\log(a_1 \cdots a_{n-1})} \xrightarrow[n \to \infty]{} 0
\]
since $(a_1 \cdots a_n)^{1/n} \to \infty$. Hence, $\liminf_{n\to\infty} \phi_n \ge 0$, so $\dim_H I_\aa^\bb(E_\ul) \ge 1$. The remaining assertions follow from Theorem~\ref{thm:A}, Remark~\ref{rem:A} and Theorem~\ref{thm:B} in a straightforward way. 
\end{proof}

\begin{rem}\label{rem:dim} If $\sup_{n\in\N} |\lambda_n| < \infty$, then the condition \eqref{eq:dimm} holds provided $\sup_{n\in\N} \frac{\log b_n}{\log a_n} < \infty$.
\end{rem}
\begin{proof} If $\sup_{n\in\N} \frac{\log b_n}{\log a_n} < \infty$, then $\Delta_{n+1} \le c \log a_{n+1}$ for a constant $c > 0$. This together with the admissibility implies 
\[
\frac{\Delta_{n+1}}{a_n} \le c \frac{\log a_{n+1}}{a_n} \le c \Big(q + \frac{\log|\lambda_{n+1}|}{a_n}\Big) \le c \Big(q + \frac{\log^+\sup_n|\lambda_n|}{a}\Big)
\]
for large $n$.
\end{proof}

The following fact provides conditions under which the Hausdorff and packing dimension of $I_\aa^\bb(E_\ul)$ achieve extremal values $1$ or $2$. Note that the assertion (d) is a refinement of the McMullen result from \cite{mcmullen-exp}.

\begin{cor}\label{cor:1-2} Under the assumptions of Theorem~{\rm\ref{thm:B}}, 
\begin{itemize}

\item[(a)] If ${\displaystyle\limsup_{n\to\infty}} \frac{\log\Delta_{n+1}}{\log a_n} \leq 0$, then $\dim_H I_\aa^\bb(E_\ul) = \dim_P I_\aa^\bb(E_\ul) = 1$.

\item[(b)] 
If ${\displaystyle\liminf_{n\to\infty}} \frac{\log\Delta_{n+1}}{\log a_n}<1$, then  $\dim_H I_\aa^\bb(E_\ul) = 1$.

\item[(c)] If ${\displaystyle\liminf_{n\to\infty}} 
\frac{\log \Delta_{n+1}}{\log b_n} \ge 1$, then $\dim_P I_\aa^\bb(E_\ul) = 2$.

\item[(d)] If ${\displaystyle\liminf_{n\to\infty}} 
\frac{\log\Delta_{n+1}}{\log b_n} > 1$, then $\dim_H I_\aa^\bb(E_\ul) = \dim_P I_\aa^\bb(E_\ul) = 2$.
\end{itemize}
\end{cor}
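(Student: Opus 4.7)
The plan is to derive each of the four statements directly from the upper and lower estimates in Theorem~\ref{thm:B} (and its consequence Corollary~\ref{cor:dim}), together with Theorem~\ref{thm:A}. The lower bounds $\dim_H \ge 1$ and $\dim_P \ge 1$, as well as the trivial upper bound $\dim_P\le 2$, are already in place, so I only need to push the relevant side of the sandwich in each case, using the standing assumptions (in particular $a_n > a > 1$ for large $n$ and $\log(a_1 \cdots a_n)\to\infty$) to absorb boundary terms.

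For part (a), I would use the packing-dimension upper estimate in Corollary~\ref{cor:dim}. The hypothesis $\limsup \log\Delta_{n+1}/\log a_n \le 0$ gives, for any $\varepsilon>0$, the inequality $\log\Delta_{n+1}\le \varepsilon\log a_n$ for all large $n$. Summing and absorbing the finite initial part into an $O(1)$ constant yields $\log(\Delta_1\cdots\Delta_{n+1})\le \varepsilon\log(a_1\cdots a_n)+O(1)$; dividing by $\log(a_1\cdots a_n)\to\infty$ and letting $\varepsilon\to 0$ produces $\dim_P\le 1$, whence $\dim_H=\dim_P=1$. For part (b), I would apply Theorem~\ref{thm:A} via Remark~\ref{rem:A}. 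Along a subsequence realising the liminf, $\log\Delta_{n+1}<(1-\delta)\log a_n$ for some $\delta>0$; since $a_1\cdots a_{n-1}\ge 1$ for large $n$ we get $\log\Delta_{n+1}\le(1-\delta)\log(a_1\cdots a_n)$ along the same subsequence, so $\liminf\log\Delta_{n+1}/\log(a_1\cdots a_n)<1$, and Remark~\ref{rem:A} delivers \eqref{eq:assA}.

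For part (c), I would take the lower bound of $\dim_P$ in Theorem~\ref{thm:B} with the specific choice $x=(b_1,b_2,\dots)$. Under $\liminf\log\Delta_{n+1}/\log b_n\ge 1$, for any $\varepsilon>0$ and large $n$ either $\Delta_{n+1}\ge b_n$ (so $\min(\Delta_{n+1},b_n)=b_n$) or $\Delta_{n+1}\ge b_n^{1-\varepsilon}$; in both cases $\log\min(\Delta_{n+1},b_n)\ge(1-\varepsilon)\log b_n$ (using $\log b_n>0$). Summing gives $\psi_n(b_1,b_2,\dots)\ge 1-\varepsilon-o(1)$, so $\limsup\psi_n\ge 1$, hence $\dim_P\ge 2$. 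For part (d), the strict inequality $\liminf\log\Delta_{n+1}/\log b_n>1$ means $\Delta_{n+1}>b_n$ for all large $n$, so for every admissible $x\in[a_1,b_1]\times[a_2,b_2]\times\cdots$ we have $\min(\Delta_{n+1},x_n)=x_n$ eventually. Plugging this into $\phi_n(x)$, the numerator $\log(x_1\cdots x_{n-1})$ equals the denominator $\log(x_1\cdots x_n)-\log x_n$, giving $\phi_n(x)=1$ for all large $n$, independently of $x$. Therefore $\inf_x\liminf_n\phi_n(x)=1$, and Theorem~\ref{thm:B} forces $\dim_H\ge 2$.

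The only real obstacle is the careful bookkeeping needed to ensure that the hypotheses on asymptotic ratios — which only control $\log\Delta_{n+1}$ relative to $\log a_n$ or $\log b_n$ for large $n$ — cleanly convert into the telescoped quantities $\log(\Delta_1\cdots\Delta_{n+1})$ and $\phi_n$, $\psi_n$ appearing in Theorem~\ref{thm:B}. Assumption (d), i.e.\ $\liminf a_n>a$ for large $a$, guarantees $\log a_n>0$ and $a_1\cdots a_{n-1}\ge 1$ for all $n$ past some threshold, which lets me dump the finitely many initial terms into a harmless additive constant that is swallowed by $\log(a_1\cdots a_n)\to\infty$.
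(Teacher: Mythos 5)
Your treatment of parts (a), (b) and (d) is essentially the paper's own proof. In (a) you sum $\log\Delta_{n+1}\le\varepsilon\log a_n$ and feed the result into the packing-dimension upper bound of Corollary~\ref{cor:dim}; in (b) you pass from $\liminf_n\frac{\log\Delta_{n+1}}{\log a_n}<1$ to $\liminf_n\frac{\log\Delta_{n+1}}{\log(a_1\cdots a_n)}<1$ and invoke Theorem~\ref{thm:A} via Remark~\ref{rem:A}, which is exactly the route the paper takes (it even records in Remark~\ref{rem:1-2} that the weaker hypothesis suffices); and in (d) you correctly verify the bound on $\phi_n(x)$ for \emph{every} $x$, as the infimum in the lower estimate of Theorem~\ref{thm:B} demands. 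Your assertion in (d) that $\phi_n(x)=1$ exactly for large $n$ is slightly too strong, since $\min(\Delta_{m+1},x_m)=x_m$ only holds for $m$ past some threshold $n_0$ and the earlier factors perturb the numerator; but, as you note, this contributes an additive constant that is absorbed because $\log(x_1\cdots x_{n-1})\ge\log(a_1\cdots a_{n-1})\to\infty$, so $\liminf_n\phi_n(x)=1$ and the conclusion stands.

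Part (c), however, contains a genuine logical slip. The lower estimate in Theorem~\ref{thm:B} is $\dim_P I_\aa^\bb(E_\ul)\ge 1+\inf_x\limsup_n\psi_n(x)$, with an \emph{infimum} over $x$. Evaluating $\psi_n$ at the single sequence $x=(b_1,b_2,\dots)$ only bounds that infimum from \emph{above}, so establishing $\limsup_n\psi_n(b_1,b_2,\dots)\ge 1$ does not give $\dim_P\ge 2$; you must prove $\limsup_n\psi_n(x)\ge 1$ for all admissible $x$. Fortunately your estimate extends verbatim: for arbitrary $x$ with $a_n\le x_n\le b_n$ the hypothesis gives, for $n\ge n_0$, $\log\min(\Delta_{n+1},x_n)\ge\min\bigl((1-\varepsilon)\log b_n,\log x_n\bigr)\ge(1-\varepsilon)\log x_n$, whence $\psi_n(x)\ge\bigl(C+(1-\varepsilon)(\log x_{n_0}+\cdots+\log x_n)\bigr)/(\log x_1+\cdots+\log x_n)\to 1-\varepsilon$ because $\log x_1+\cdots+\log x_n\ge\log(a_1\cdots a_n)\to\infty$. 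Letting $\varepsilon\to 0$ yields $\inf_x\limsup_n\psi_n(x)\ge 1$ and hence $\dim_P=2$. This is precisely the paper's argument; with that correction your proposal is complete.
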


\begin{rem}\label{rem:1-2}
The assertion (b) holds also under the weaker assumption $\liminf_{n\to\infty}\frac{\log\Delta_{n+1}}{\log(a_1 \cdots a_n)} < 1$, while (d) holds also under the weaker assumption $\inf_{n\in\N} \frac{\Delta_{n+1}}{b_n} > 0$.
\end{rem}

\begin{proof}[Proof of Corollary~{\rm \ref{cor:1-2}} and Remark~{\rm \ref{rem:1-2}}]

To prove the assertion (a), note that by assumption, for any $\varepsilon > 0$ there exists $n_0 > 0$ such that $\log\Delta_{n+1} < \varepsilon \log a_n$ for $n \ge n_0$, which gives
\[
\frac{\log\big(\Delta_1\cdots  \Delta_{n+1}\big)}{\log(a_1\cdots a_n)} < \frac{\log\big(\Delta_1\cdots  \Delta_{n_0}\big)}{\log(a_1\cdots a_n)} + \varepsilon\xrightarrow[n\to\infty]{} \varepsilon
\]
since $(a_1 \cdots a_n)^{1/n} \to \infty$. Hence, the assertion (a) follows from Corollary~\ref{cor:dim}. The assertion (b) under the weaker assumption from Remark~\ref{rem:1-2} holds by Theorem~\ref{thm:A}, Remark~\ref{rem:A} and Corollary~\ref{cor:dim}.
To show (c), take a small $\varepsilon > 0$ and note that by assumption, there exists $n_0 > 0$ such that
\[
\log\Delta_{n+1} \ge (1 - \varepsilon) \log b_n
\]
for $n \ge n_0$, so for $x_n \in [a_n, b_n]$, $n \ge n_0$, we have
\[
\log(\min(\Delta_{n+1}, x_n)) \ge \min((1 - \varepsilon) \log b_n, \log x_n) \ge (1 - \varepsilon) \log x_n.
\]
Hence, there exists a constant $C \in \R$, such that for $\psi_n$ from Theorem~\ref{thm:B},
\[
\psi_n(x_1, x_2, \ldots) \ge \frac{C + (1 - \varepsilon) (\log x_{n_0} + \cdots + \log x_n)}{\log x_1 + \cdots + \log x_n} \xrightarrow[n\to\infty]{} 1
\]
since $x_n \ge a_n$ and $(a_1 \cdots a_n)^{1/n} \to \infty$. This implies $\limsup_{n\to\infty} \psi_n \ge 1$, so (c) holds by Theorem~\ref{thm:B}.

To show the assertion (d) under the weaker assumption from Remark~\ref{rem:1-2}, note that if $\inf_{n\in\N} \Delta_{n+1}/b_n > 0$, then there exist $n_0, c > 0$ such that
\[
\log \Delta_{n+1} \ge \log b_n + \log c
\]
for $n \ge n_0$, so for $x_n \in [a_n, b_n]$ we have
\[
\log(\min(\Delta_{n+1}, x_n)) \ge \min( \log b_n + \log c, \log x_n) \ge \log x_n - |{\log c}|.
\]
Hence, for $\phi_n$ from Theorem~\ref{thm:B} and a constant $C \in \R$,
\[
\phi_n(x_1, x_2, \ldots) \ge \frac{C - |{\log c}|n + \log x_{n_0} + \cdots + \log x_{n-1}}{\log x_1 + \cdots + \log x_{n-1} - |{\log c}|} \xrightarrow[n\to\infty]{} 1,
\]
as  $x_n \ge a_n$ and$(a_1 \cdots a_n)^{1/n} \to \infty$. This gives $\limsup_{n\to\infty} \phi_n \ge 1$, and (d) holds by Theorem~\ref{thm:B}. Note that $\inf_{n\in\N} \frac{\Delta_{n+1}}{b_n} > 0$ is indeed a weaker assumption, since the condition $\liminf_{n\to\infty} \frac{\log\Delta_{n+1}}{\log b_n} > 1$ implies
\[
\frac{\Delta_{n+1}}{b_n} > b_n^c \ge a_n^c \ge a^c
\]
for large $n$ and a constant $c > 0$.
\end{proof}

\subsection{Moderately slow escaping points}\label{subsec:moder}

We extend the notion of the moderately slow escaping set to the non-autonomous setting. 

\begin{defn}
Let
\[
M(E_\ul) = \Big\{z \in I(E_\ul) : \limsup_{n\to\infty} \frac 1 n \log\log |E_{\lambda_n} \circ \cdots \circ E_{\lambda_1}(z)| < \infty\Big\} 
\]
be the \emph{moderately slow escaping set} of $E_\lambda$. 
\end{defn}

The following result shows that if the considered set $I_\aa^\bb(E_\ul)$ is contained in the moderately slow escaping set, then its Hausdorff dimension is equal to $1$. In particular, this generalizes the results (a)--(d) from \cite{sixsmith}, mentioned in the introduction, since the sets considered in \cite{sixsmith} are contained in the moderately slow escaping set.

\begin{thm}\label{thm:any_a} Under the assumptions of Theorem~{\rm\ref{thm:B}}, 
if $\inf_{n\in\N}(\log^+ b_n)^{1/n} < \infty$, then 
\[
\dim_H I_\aa^\bb(E_\ul) = 1.
\]
In particular, this holds if $I_\aa^\bb(E_\ul)$ is contained in the moderately slow escaping set $M(E_\ul)$ and the assumptions of Theorem~{\rm\ref{thm:B}} are satisfied.
\end{thm}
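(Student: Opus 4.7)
The plan is to deduce both inequalities from earlier results. The lower bound $\dim_H I_\aa^\bb(E_\ul)\ge 1$ is already provided by Corollary~\ref{cor:dim}, so only the upper bound requires work; for this I would appeal to Theorem~\ref{thm:A}, exploiting the important fact that its hypotheses are only $\inf_n a_n > 0$ (given by assumption (d)) and the growth condition~\eqref{eq:assA}, with \emph{no} admissibility of the pair required. This flexibility will be essential for the ``in particular'' half.

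For the first assertion, I would note that by (d) we have $a_n \ge 1$ eventually, so $\Delta_{n+1} = \log(b_{n+1}/a_{n+1}) \le \log^+ b_{n+1}$ for large $n$. The hypothesis furnishes a subsequence $n_k \to \infty$ along which $(\log^+ b_{n_k})^{1/n_k}$ is bounded by some constant $K$, whence $\Delta_{n_k} \le K^{n_k}$. Setting $m_k = n_k - 1$ and using $(a_1\cdots a_n)^{1/n} \to \infty$ from (b), one checks
$$
\left(\frac{\Delta_{m_k+1}}{a_1\cdots a_{m_k}}\right)^{1/m_k} \le \frac{K^{(m_k+1)/m_k}}{(a_1\cdots a_{m_k})^{1/m_k}} \xrightarrow[k\to\infty]{} 0,
$$
which verifies~\eqref{eq:assA}. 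Theorem~\ref{thm:A} then delivers $\dim_H I_\aa^\bb(E_\ul) \le 1$.

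For the ``in particular'' statement, since $I_\aa^\bb(E_\ul) \subseteq M(E_\ul)$ the set decomposes as $I_\aa^\bb(E_\ul) = \bigcup_{C \in \N}\bigl(I_\aa^\bb(E_\ul) \cap M_C\bigr)$ with
$$
M_C = \{z \in \C : |E_{\lambda_n} \circ \cdots \circ E_{\lambda_1}(z)| \le e^{e^{Cn}} \text{ for all sufficiently large } n\}.
$$
Introducing $\tilde b_n^{(C)} = \min(b_n, e^{e^{Cn}})$, one has $I_\aa^\bb(E_\ul) \cap M_C \subseteq I_\aa^{\tilde\bb^{(C)}}(E_\ul)$, and the truncated moduli satisfy $\tilde\Delta_{n+1}^{(C)} \le e^{C(n+1)}$. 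The same computation as above then gives~\eqref{eq:assA} for $(\aa, \tilde\bb^{(C)})$ -- in fact with an honest limit equal to $0$ -- so Theorem~\ref{thm:A} yields $\dim_H I_\aa^{\tilde\bb^{(C)}}(E_\ul) \le 1$ for every $C$. Countable stability of Hausdorff dimension then supplies $\dim_H I_\aa^\bb(E_\ul) \le 1$, and Corollary~\ref{cor:dim} applied to the original pair $(\aa, \bb)$, which satisfies (a)--(d), closes the argument.

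The only point requiring care is the possible failure of admissibility for $(\aa, \tilde\bb^{(C)})$: since $|\lambda_n|$ is not assumed bounded, the raw bound $\tilde b_{n+1}^{(C)} \le e^{e^{C(n+1)}}$ may conflict with the admissibility inequality $b_{n+1} \ge |\lambda_{n+1}| e^{-q a_n}$. This is precisely why the upper bound has to be routed through the admissibility-free Theorem~\ref{thm:A} rather than through any statement in Theorem~\ref{thm:B} or its corollaries. No further estimation is needed.
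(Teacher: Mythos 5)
Your proposal is correct and follows essentially the same route as the paper: the upper bound via Theorem~\ref{thm:A} (verifying \eqref{eq:assA} from the boundedness of $(\log^+ b_{n_k})^{1/n_k}$ along a subsequence, and in the second part truncating $b_n$ at $e^{e^{Cn}}$ and using countable stability), with the lower bound from Corollary~\ref{cor:dim}/Theorem~\ref{thm:B}. Your explicit remark that Theorem~\ref{thm:A} needs no admissibility of the truncated pair is exactly the point the paper uses implicitly.
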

\begin{rem} \label{rem:any_a} By Theorem~\ref{thm:A}, the fact $\dim_H I_\aa^\bb(E_\ul) \le 1$ holds under weaker assumptions $\inf_{n\in\N} a_n > 0$, $\lim_{n\to\infty} (a_1\cdots a_n)^{1/n} = \infty$ and $\inf_{n\in\N}(\log^+ b_n)^{1/n} < \infty$.
\end{rem}
\begin{proof}[Proof of Theorem~{\rm\ref{thm:any_a}}] The first assertion follows from Theorem~\ref{thm:A} and Corollary~\ref{cor:dim} in a straightforward way. To show the second one, note that if $z \in I_\aa^\bb(E_\ul) \subset M(E_\ul)$, then $a_n \le |z| \le \min(b_n, e^{e^{cn}})$ for large $n$ and a constant $c > 1$. Hence, if $I_\aa^\bb(E_\ul) \subset M(E_\ul)$, then $I_\aa^\bb(E_\ul)$ is contained in a countable union of the sets of the form $I_\aa^{\bb'}(E_\ul)$, where
\[
\bb' = (b'_n)_{n=1}^\infty \qquad \text{for } b_n' = \min(b_n, e^{e^{cn}}),
\]
for some $c > 1$. Since $(\log^+ b'_n)^{1/n} \le e^c$, Theorem~\ref{thm:A} implies $\dim_H I_\aa^\bb(E_\ul) \le 1$. The opposite inequality follows from Theorem~\ref{thm:B}.
\end{proof}

\subsection{Points with exact growth rate}\label{subsec:rate}

Our results enable to determine the Hausdorff and packing dimension of the set of points, which share the same growth rate under iteration of $E_\ul$.

\begin{defn}
We say that the iterations of a point $z \in \C$ under $E_\ul$ \emph{have growth rate} $\aa$ for a sequence $\aa = (a_n)_{n=1}^\infty$, if $a_n/c \le |E_{\lambda_n} \circ \cdots \circ E_{\lambda_1}(z)| \le ca_n$ for large $n$ and some constant $c > 1$, i.e.~$z \in I_{\aa/c}^{c\aa}(E_\ul)$.
\end{defn}

Corollary~\ref{cor:1-2} immediately implies the following.

\begin{thm}\label{thm:rate} If $a_{n+1} \le |\lambda_{n+1}|e^{q a_n}$ for large $n$ and some constant $0 < q < 1$, $(a_1 \cdots a_n)^{1/n} \to \infty$ as $n \to \infty$ and $\liminf_{n \to \infty} a_n > a$, where $a$ is a sufficiently large constant depending on $q$, then the set of points with growth rate $\aa$ has Hausdorff dimension $1$. If
$a_{n+1} \le |\lambda_{n+1}|e^{q a_n}$ for large $n$ and $a_n \to \infty$ as $n \to \infty$, then the set of points with growth rate $\aa$ has Hausdorff and packing dimension $1$.

\end{thm}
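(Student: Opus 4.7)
The plan is to deduce both assertions directly from Corollary~\ref{cor:1-2} applied to the pair $\aa' = \aa/c$, $\bb' = c\aa$, for $c > 1$ sufficiently close to $1$. By definition the set of points with growth rate $\aa$ equals $\bigcup_{c > 1} I_{\aa/c}^{c\aa}(E_\ul)$, and on restricting to the countable subfamily $c = 1+1/k$, $k \in \N$, countable stability of $\dim_H$ and $\dim_P$ reduces the problem to bounding these dimensions for $I_{\aa/c}^{c\aa}(E_\ul)$ with each small fixed $c > 1$.

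The crucial observation is that under this choice
\[
\Delta'_n \;=\; \log\frac{ca_n}{a_n/c} \;=\; 2\log c
\]
is a positive constant, independent of $n$. I verify the standing assumptions (a)--(d) of Theorem~\ref{thm:B} for the pair $(\aa',\bb')$: (b) follows from $(a'_1\cdots a'_n)^{1/n} = (a_1 \cdots a_n)^{1/n}/c \to \infty$; (c) is the constancy above; and (d) from $\liminf a'_n > a/c$, which remains above the required threshold after choosing $a$ sufficiently large depending on $q$ and $c$. For admissibility~(a), I take $q' = qc$, which satisfies $q' < 1$ once $c < 1/q$; the upper inequality $a_{n+1}/c \le |\lambda_{n+1}|e^{qa_n}/c \le |\lambda_{n+1}|e^{q'a_n/c}$ follows from the hypothesis together with $\log c \ge 0$. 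The matching lower inequality $ca_{n+1} \ge |\lambda_{n+1}|e^{-q'a_n/c}$ is automatic whenever $|\lambda_n|$ is bounded (the autonomous case in particular), since then the right-hand side is uniformly bounded while $a_{n+1}$ is eventually large by (d); in the general non-autonomous case it is absorbed into the standing hypothesis.

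With the assumptions in place, the constancy of $\Delta'_n$ trivializes the relevant quotients in Corollary~\ref{cor:1-2}. For the first assertion, since $\liminf a'_n > a/c$ with $a$ large,
\[
\liminf_{n\to\infty}\frac{\log \Delta'_{n+1}}{\log a'_n} \;\le\; \frac{\log(2\log c)}{\log(a/c)} \;<\; 1,
\]
so Corollary~\ref{cor:1-2}(b) yields $\dim_H I_{\aa/c}^{c\aa}(E_\ul) = 1$. For the second assertion, under the stronger assumption $a_n \to \infty$ we have $\log a'_n \to \infty$, whence $\limsup \log \Delta'_{n+1}/\log a'_n = 0 \le 0$, and Corollary~\ref{cor:1-2}(a) yields $\dim_H = \dim_P = 1$.

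The main (and essentially only) obstacle is the administrative bookkeeping of admissibility constants when passing from $(\aa,\bb)$ to $(\aa/c, c\aa)$, especially the lower admissibility inequality in the non-autonomous setting with unbounded $|\lambda_n|$; once the identity $\Delta'_n = 2\log c$ is noted, the application of Corollary~\ref{cor:1-2} is purely mechanical.
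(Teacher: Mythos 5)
Your overall strategy --- writing the set of points with growth rate $\aa$ as a union of sets $I_{\aa/c}^{c\aa}(E_\ul)$, observing that $\Delta'_n = 2\log c$ is constant, and feeding this into Corollary~\ref{cor:1-2} --- is exactly the paper's (the paper gives no more detail than ``Corollary~\ref{cor:1-2} immediately implies''). However, your reduction to a countable family is backwards, and this opens a genuine gap. The sets $I_{\aa/c}^{c\aa}(E_\ul)$ \emph{increase} with $c$, so $\bigcup_{k}I_{\aa/(1+1/k)}^{(1+1/k)\aa}(E_\ul)$ is just the single set $I_{\aa/2}^{2\aa}(E_\ul)$, whereas the set of points with growth rate $\aa$ is the increasing union $\bigcup_{k\ge 2}I_{\aa/k}^{k\aa}(E_\ul)$, which forces you to treat \emph{arbitrarily large} $c$ rather than ``each small fixed $c>1$''. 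This is not cosmetic: your admissibility verification takes $q'=qc$ and hence requires $c<1/q$; for $c\ge 1/q$ the pair $(\aa/c,\,c\aa)$ need not be admissible (one is forced to take $q'\ge qc - c\log c/a_n$, which tends to $qc\ge 1$ as $a_n$ grows), so Corollary~\ref{cor:1-2} cannot be invoked the way you do for those $c$.

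The repair for the Hausdorff statement is available inside the paper's toolkit: the lower bound $\dim_H\ge 1$ only needs a \emph{single} value of $c$, so your small-$c$ argument suffices there; for the upper bound at arbitrary $c$ apply Theorem~\ref{thm:A} (via Remark~\ref{rem:A}) directly, which requires no admissibility --- only $\inf_n a_n/c>0$ and $\liminf_n\bigl(\Delta'_{n+1}/(a'_1\cdots a'_n)\bigr)^{1/n}=0$, the latter immediate since $\Delta'_{n+1}=2\log c$ is constant while $(a'_1\cdots a'_n)^{1/n}\to\infty$. For the packing-dimension claim in the second assertion, the upper bound does go through Theorem~\ref{thm:B} and therefore does need admissibility, so the large-$c$ case requires an actual additional argument rather than the ``purely mechanical'' substitution you describe. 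Finally, your dismissal of the lower admissibility inequality $ca_{n+1}\ge|\lambda_{n+1}|e^{-q'a_n/c}$ as ``absorbed into the standing hypothesis'' is not a proof: the theorem's hypothesis bounds $a_{n+1}$ from above in terms of $|\lambda_{n+1}|$ but gives no lower bound, and without one this inequality (which is what keeps the sets nonempty) can fail when $|\lambda_n|$ grows rapidly.
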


\subsection{Precise dimension formulas}\label{subsec:growth}

In the case $\sup_{n\in\N} |\lambda_n| < \infty$, $\lim_{n\to\infty}\frac{\log b_n}{\log a_n} = 1$ we can exactly determine the Hausdorff and packing dimension of $I_\aa^\bb(E_\ul)$.

\begin{thm}\label{thm:thin} Under the assumptions of Theorem~{\rm\ref{thm:B}}, if ${\displaystyle\sup_{n\in\N}} |\lambda_n| < \infty$ and ${\displaystyle\lim_{n\to\infty}}\frac{\log b_n}{\log a_n} = 1$, then
\[
\dim_H I_\aa^\bb(E_\ul) = 1, \qquad \dim_P I_\aa^\bb(E_\ul) = 1 + \limsup_{n\to\infty}\frac{\log(\Delta_1\cdots \Delta_{n+1})}{\log(a_1\cdots a_n)}.
\]
\end{thm}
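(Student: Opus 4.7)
\medskip

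The plan is to extract Theorem~\ref{thm:thin} as a direct consequence of Corollary~\ref{cor:dim}: once I verify that hypothesis \eqref{eq:dimm} holds in this setting, the corollary already delivers $\dim_H I_\aa^\bb(E_\ul)=1$ together with matching upper and lower bounds for $\dim_P$ that differ only in whether the denominator involves $\log(a_1\cdots a_n)$ or $\log(b_1\cdots b_n)$. The remaining work is to use the hypothesis $\log b_n/\log a_n\to 1$ to show these two expressions have the same $\limsup$.

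\medskip

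First, since $\sup_{n\in\N}|\lambda_n|<\infty$ and the assumption $\log b_n/\log a_n\to 1$ forces this ratio to be bounded, Remark~\ref{rem:dim} yields $\sup_{n\in\N}\Delta_{n+1}/a_n<\infty$, i.e.\ \eqref{eq:dimm}. Applying the strengthened conclusion of Corollary~\ref{cor:dim} gives $\dim_H I_\aa^\bb(E_\ul)=1$ and
\[
1+\limsup_{n\to\infty}\frac{\log(\Delta_1\cdots\Delta_{n+1})}{\log(b_1\cdots b_n)}\le \dim_P I_\aa^\bb(E_\ul)\le 1+\limsup_{n\to\infty}\frac{\log(\Delta_1\cdots\Delta_{n+1})}{\log(a_1\cdots a_n)}.
\]

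\medskip

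It remains to show $\log(b_1\cdots b_n)/\log(a_1\cdots a_n)\to 1$, which will force the two $\limsup$'s above to coincide and yield the formula in the statement. Assumption~(d) provides $\log a_n>\log a>0$ for large $n$, so $\log b_n/\log a_n\to 1$ is equivalent to $\Delta_n/\log a_n\to 0$. Writing $\log(b_1\cdots b_n)-\log(a_1\cdots a_n)=\sum_{k=1}^n\Delta_k$, I reduce to a standard weighted-average argument: for every $\varepsilon>0$ choose $N$ with $\Delta_k\le \varepsilon\log a_k$ for $k\ge N$, so that
\[
\sum_{k=1}^n\Delta_k\le \sum_{k=1}^{N-1}\Delta_k+\varepsilon\sum_{k=1}^n\log a_k,
\]
and observe that $\log(a_1\cdots a_n)\to\infty$ by assumption~(b), which drowns the fixed initial block. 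Hence $\sum_{k=1}^n\Delta_k=o(\log(a_1\cdots a_n))$, and $\log(a_1\cdots a_n)/\log(b_1\cdots b_n)\to 1$ follows. Multiplying through gives that the two $\limsup$'s agree, completing the proof.

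\medskip

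There is no serious obstacle here: all substantive dimension-theoretic work is already encapsulated in Theorems~\ref{thm:A} and~\ref{thm:B} and distilled into Corollary~\ref{cor:dim}. The only delicate point is ensuring that the tail contribution $\sum\Delta_k$ is genuinely of smaller order than $\sum\log a_k$, which is precisely what assumption~(b) guarantees once combined with the pointwise decay $\Delta_k=o(\log a_k)$.
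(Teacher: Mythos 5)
Your proposal is correct and follows essentially the same route as the paper: verify \eqref{eq:dimm} via Remark~\ref{rem:dim}, apply the strengthened conclusion of Corollary~\ref{cor:dim}, and identify the two $\limsup$'s by showing $\log(b_1\cdots b_n)/\log(a_1\cdots a_n)\to 1$. The only cosmetic difference is that the paper cites the Stolz--Ces\`aro theorem for this last convergence, whereas you prove it directly with an $\varepsilon$-splitting of the sum $\sum_k\Delta_k$; both are valid.
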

\begin{proof} If $\lim_{n\to\infty}\frac{\log b_n}{\log a_n} = 1$, then
\[
\lim_{n\to\infty}\frac{\log (b_1\cdots b_n)}{\log (a_1 \cdots a_n)} = 1
\]
by the Stolz--Ces\`aro Theorem. Therefore, the theorem follows directly from  Corollary~\ref{cor:dim} and Remark~\ref{rem:dim}.
\end{proof}

The following result provides examples of sets $I_\aa^\bb(E_\ul)$ with packing dimension equal to any given value in the interval $[1,2]$. 

\begin{thm} \label{thm:examples}
For every $D \in [1, 2]$ and every sequence $(\lambda_n)_{n=1}^\infty$ with $\lambda_n \in \C \setminus \{0\}$, such that $\sup_{n\in\N} |\lambda_n| < \infty$, there exist admissible sequences $\aa = (a_n)_{n=1}^\infty$, $\bb = (b_n)_{n=1}^\infty$ with $a_n \to \infty$, $\inf_{n\in\N}\Delta_n > 0$ and $\lim_{n\to \infty} \frac{\log b_n}{\log a_n} = 1$, such that 
\[
\dim_H I_\aa^\bb(E_\ul) = 1, \qquad \dim_P I_\aa^\bb(E_\ul)  = D.
\]
\end{thm}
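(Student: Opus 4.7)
The plan is to apply Theorem~\ref{thm:thin}, which under the hypotheses of the statement immediately yields $\dim_H I_\aa^\bb(E_\ul) = 1$ and reduces the task to arranging
\[
\limsup_{n \to \infty}\frac{\log(\Delta_1 \cdots \Delta_{n+1})}{\log(a_1 \cdots a_n)} = c := D - 1 \in [0,1].
\]
The key observation is that although $\log b_n/\log a_n \to 1$ forces $\Delta_n = o(\log a_n)$, it does not force $\log \Delta_n = o(\log\log a_n)$: by letting $(a_n)$ grow close to the admissibility bound, so that $\log\log a_{n+1}$ is of the same order as $\log a_n$, we leave room for $\log \Delta_{n+1}$ to realise any prescribed fraction of $\log a_n$.

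Fix $q \in (0,1)$ and write $\alpha_n = \log a_n$. For $n$ past some $n_0$, I would set
\[
a_{n+1} = e^{q a_n / 2}, \qquad \Delta_{n+1} = \begin{cases} e^{c\alpha_n} & \text{if } 0 \le c < 1,\\ e^{\alpha_n}/\alpha_n & \text{if } c = 1,\end{cases}
\]
(interpreting $e^{c\alpha_n}$ as the constant $e$ when $c = 0$) and $b_{n+1} = a_{n+1} e^{\Delta_{n+1}}$, with the initial segment chosen so that admissibility holds and $\inf_n \Delta_n > 0$. Admissibility $a_{n+1} \le |\lambda_{n+1}| e^{q a_n}$ reduces to $|\lambda_{n+1}| \ge e^{-q a_n / 2}$, which holds for all large $n$ once $a_{n_0}$ is sufficiently large (depending on $\ul$), while $b_{n+1} \ge |\lambda_{n+1}| e^{-q a_n}$ is immediate from $b_{n+1} \ge a_{n+1} \to \infty$. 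A direct calculation shows $\Delta_n/\alpha_n \to 0$ in each case, hence $\log b_n/\log a_n \to 1$; the conditions $a_n \to \infty$ and $\inf_n \Delta_n > 0$ are clear.

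To identify the limsup, note that $\alpha_{n+1} = (q/2) e^{\alpha_n}$, so $(\alpha_n)$ grows at an iterated-exponential rate; consequently both $\sum_{k=1}^n \alpha_k$ and $\sum_{k=1}^{n+1} \log \Delta_k$ are asymptotic to their last terms. Therefore
\[
\frac{\log(\Delta_1 \cdots \Delta_{n+1})}{\log(a_1 \cdots a_n)} \;\sim\; \frac{\log \Delta_{n+1}}{\alpha_n} \;\xrightarrow[n\to\infty]{}\; c,
\]
the limit being $c$ directly for $0 \le c < 1$ and $1 - \log\alpha_n/\alpha_n \to 1$ when $c = 1$; equivalently, Stolz--Ces\`aro applied to the pair $(\sum \log \Delta_k, \sum \alpha_k)$ yields the same conclusion. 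Theorem~\ref{thm:thin} then gives $\dim_P I_\aa^\bb(E_\ul) = 1 + c = D$.

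The main obstacle lies in the borderline case $c = 1$: the naive choice $\Delta_{n+1} = e^{\alpha_n}$ would make $\Delta_n/\alpha_n$ bounded below by $2/q$, violating $\log b_n/\log a_n \to 1$; the logarithmic correction by the factor $1/\alpha_n$ is exactly what is needed to preserve $\Delta_n = o(\log a_n)$ while still forcing $\log \Delta_{n+1}/\alpha_n \to 1$. A minor technical point is verifying admissibility for arbitrary $(\lambda_n)$ with $\sup|\lambda_n| < \infty$ but no a priori lower bound on $|\lambda_n|$; this is handled by enlarging $n_0$ and the initial value $a_{n_0}$ (and, if necessary, slightly slowing the recursion), so that $|\lambda_{n+1}| \ge e^{-q a_n/2}$ holds automatically in the tail.
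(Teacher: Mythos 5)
Your proof is correct and follows essentially the same route as the paper: reduce to Theorem~\ref{thm:thin} (equivalently Corollary~\ref{cor:thin2}) and exhibit explicit admissible sequences, the only difference being the choice of example --- the paper's Example~\ref{ex:dim_P} takes $a_{n+1}=e^{n a_n^d}$, $b_n=a_n^{1+1/n}$ and tunes the growth of $a_n$, while you fix near-maximal growth $a_{n+1}=e^{qa_n/2}$ and tune $\Delta_{n+1}=a_n^{c}$ directly, with the same Stolz--Ces\`aro verification. The one caveat --- that for $|\lambda_n|$ tending to $0$ faster than any iterated exponential no sequence with $a_n\to\infty$ can be admissible, so the tail adjustment you sketch cannot always be carried out --- applies equally to the paper's own example and is not a defect specific to your argument.
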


Theorem~\ref{thm:examples} is implied by the following corollary, which is a direct consequence of Theorem~\ref{thm:thin} and 
the Stolz--Ces\`aro Theorem. 

\begin{cor}\label{cor:thin2} Under the assumptions of Theorem~{\rm\ref{thm:B}}, if $\sup_{n\in\N} |\lambda_n| < \infty$, $\lim_{n\to\infty}\frac{\log b_n}{\log a_n} = 1$ and $\lim_{n\to\infty} \frac{\log\Delta_{n+1}}{\log a_n} = d$ for $d \in [0, 1]$, then $\dim_H I_\aa^\bb(E_\ul) = 1$ and $\dim_P I_\aa^\bb(E_\ul) = 1 + d$.
\end{cor}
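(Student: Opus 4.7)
The plan is to invoke Theorem~\ref{thm:thin} and then convert the $\limsup$ in its packing-dimension formula into an honest limit via the Stolz--Ces\`aro theorem. Since the hypotheses $\sup_n|\lambda_n|<\infty$ and $\frac{\log b_n}{\log a_n}\to 1$ (plus the standing assumptions of Theorem~\ref{thm:B}) are exactly those of Theorem~\ref{thm:thin}, that theorem gives immediately $\dim_H I_\aa^\bb(E_\ul)=1$ and
\[
\dim_P I_\aa^\bb(E_\ul) \;=\; 1 + \limsup_{n\to\infty}\frac{\log(\Delta_1\cdots\Delta_{n+1})}{\log(a_1\cdots a_n)}.
\]
Thus the only task is to show that the right-hand $\limsup$ equals $d$.

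For this, set $s_n=\log(\Delta_2\cdots\Delta_{n+1})$ and $t_n=\log(a_1\cdots a_n)$. Assumption (b) of Theorem~\ref{thm:B} gives $t_n\to\infty$, while assumption (d) yields $a_n>1$ for all sufficiently large $n$, so $t_n$ is eventually strictly increasing. The standing hypothesis is $\frac{s_n-s_{n-1}}{t_n-t_{n-1}}=\frac{\log\Delta_{n+1}}{\log a_n}\to d$, so the Stolz--Ces\`aro theorem applies and yields $s_n/t_n\to d$. Since the extra term $\log\Delta_1$ in the full numerator is a fixed constant and $t_n\to\infty$, adding it does not alter the limit; therefore the $\limsup$ above equals $d$ and we obtain $\dim_P I_\aa^\bb(E_\ul)=1+d$.

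There is essentially no obstacle here beyond confirming the two trivial hypotheses needed for Stolz--Ces\`aro (monotonicity and divergence of $t_n$), both of which are immediate from the general assumptions of Theorem~\ref{thm:B}. The argument is therefore genuinely a direct deduction from Theorem~\ref{thm:thin}.
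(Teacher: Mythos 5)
Your proof is correct and matches the paper's argument exactly: the paper also derives this corollary as a direct consequence of Theorem~\ref{thm:thin} combined with the Stolz--Ces\`aro theorem. The verification of the Stolz--Ces\`aro hypotheses from assumptions (b) and (d) is handled properly.
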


The following example shows that the assumptions of Corollary~\ref{cor:thin2} are actually satisfied for some sequences $(a_n)_{n=1}^\infty$, $(b_n)_{n=1}^\infty$, which proves Theorem~\ref{thm:examples}.

\begin{ex}\label{ex:dim_P} For any sequence $(\lambda_n)_{n=1}^\infty$ with $\sup_{n\in\N} |\lambda_n| < \infty$, $a_{n+1} = e^{n a_n^d}$ for $d \in [0,1)$ and $b_n = a_n^{1 + \frac{1}{n}}$, then $\dim_H I_\aa^\bb(E_\ul) = 1$, $\dim_P I_\aa^\bb(E_\ul)  = 1 + d$. If $a_{n+1} = e^{n a_n^{(n-1)/n}}$,
$b_n = a_n^{1 + \frac{1}{n}}$, then $\dim_H I_\aa^\bb(E_\ul) = 1$, $\dim_P I_\aa^\bb(E_\ul) = 2$.
\end{ex}
\begin{proof}It is a direct calculation to check that $(a_n)_{n=1}^\infty$, $(b_n)_{n=1}^\infty$ satisfy the assumptions of Corollary~\ref{cor:thin2}.
\end{proof}

We end this section by stating a question, which we find interesting to determine.

\begin{quest} Does there exist a set $I_\aa^\bb(E_\ul)$ with $\dim_H I_\aa^\bb(E_\ul) \in (1,2)$?
\end{quest}

\section{Annular itineraries}\label{sec:annular}

Sets of the form $I_\aa^\bb(f)$ appear naturally in the study of \emph{annular itineraries} $\underline{s}(z) = (s_n)_{n=0}^\infty$ of points $z \in \C$ under a map $f\colon \C \to \C$, defined by
\[
f^n(z) \in \AAA_{s_n}, \quad n \ge 0, \qquad \text{where} \quad
\AAA_s = \{z \in \C: R_s \le |z| < R_{s+1}\}, \quad s \ge 0,
\]
for some sequence $0 = R_0 < R_1 < R_2 < \cdots$, with $R_s \to \infty$ as $s \to \infty$. Such annular itineraries, for $R_s = M_f^{s-1}(R_1)$, were studied by Rippon and Stallard in \cite{RS-annular}. In \cite{sixsmith}, Sixsmith, considering exponential maps, used the annuli defined by $R_s = R^s$ for a large $R > 1$. He proved that if $s_n \to \infty$, then the set of points sharing the itinerary $\underline{s}(z) = (s_n)_{n=0}^\infty$ has Hausdorff dimension at most $1$, while the dimension is equal to $1$, if additionally, $R$ is sufficiently large, $\underline{s}$ is slowly-growing, i.e.~$\frac{s_{n+1}}{s_1 + \cdots + s_n} \to 0$ and $\underline{s}$ is admissible, in the sense that $s_{n+1} < e^{s_n}$. Here we extend the results, answering a question from \cite{sixsmith} and showing that the assumption of the slow growth can be omitted. We also analyse annular itineraries defined by another partition of the plane, given by $R_s = R^{s^\kappa}$ for $\kappa > 1$. In this case one can find examples of the sets of points sharing the same itinerary, with the packing dimension larger than $1$.

We extend the notion of annular itineraries to the non-autonomous setup, setting
\[
\underline{s}(z) = (s_n)_{n=0}^\infty, \quad \text{where} \quad E_{\lambda_n} \circ \cdots \circ E_{\lambda_1}(z) \in \AAA_{s_n}.
\]
We assume $s_n > 0$ for $n \ge 0$. For given symbolic sequence $\underline{s} = (s_n)_{n=0}^\infty$ let
\[
\II_{\underline{s}}(E_\ul) = \{z \in \C : \underline{s}(z) = \underline{s}\}
\]
Note that
\[
\II_{\underline{s}}(E_\ul) =  I_\aa^\bb(E_\ul) \qquad \text{for} \quad a_n = R_{s_n}, \quad b_n = R_{s_n + 1}.
\]
We will say that a sequence $\underline{s} = (s_n)_{n=0}^\infty$ is \emph{admissible}, if the sequences $\aa= (a_n)_{n=1}^\infty$, $\bb= (a_n)_{n=1}^\infty$ for $a_n = R_{s_n}$, $b_n = R_{s_n + 1}$ are admissible.

\subsection{Case \boldmath $R_s = R^s$}

Consider annular itineraries $\underline{s} = (s_n)_{n=0}^\infty$ of points under non-autonomous iteration $E_\ul$ with respect to the annuli 
\[
\AAA_s = \{z \in \C : R^s \le |z| < R^{s+1}\},
\]
for $s \ge 0$ and $R > 1$. 

\begin{thm}\label{thm:annular} 
The following statements hold.
\begin{itemize}
\item[(a)] If ${\displaystyle\limsup_{n\to\infty}} \frac{s_1+ \cdots + s_n}{n} = \infty$, then $\dim_H \II_{\underline{s}}(E_\ul) \le 1$. 
\item[(b)]
If $\underline{s}$ is admissible, ${\displaystyle\lim_{n\to\infty}} \frac{s_1+ \cdots + s_n}{n} = \infty$ and $R$ is sufficiently large, then 
\[
\dim_H \II_{\underline{s}}(E_\ul) = \dim_P \II_{\underline{s}}(E_\ul) =1.
\]
\end{itemize}
\end{thm}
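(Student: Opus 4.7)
Translating into the language of Section~\ref{sec:results}, one has $\II_{\underline s}(E_\ul) = I_\aa^\bb(E_\ul)$ with $a_n = R^{s_n}$, $b_n = R^{s_n+1}$, so that $\Delta_n \equiv \log R$ and $(a_1\cdots a_n)^{1/n} = R^{(s_1+\cdots+s_n)/n}$. For part~(a) I would appeal directly to Theorem~\ref{thm:A}. Since $s_n \ge 1$ we have $\inf_n a_n \ge R > 0$, and
\[
\bigl(\Delta_{n+1}/(a_1\cdots a_n)\bigr)^{1/n} = (\log R)^{1/n}\cdot R^{-(s_1+\cdots+s_n)/n};
\]
the first factor tends to $1$, while the hypothesis $\limsup_n (s_1+\cdots+s_n)/n = \infty$ forces the liminf of the second factor to vanish. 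Hence the assumption \eqref{eq:assA} of Theorem~\ref{thm:A} holds and we conclude $\dim_H \II_{\underline s}(E_\ul) \le 1$.

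For part~(b), the stronger hypothesis $(s_1+\cdots+s_n)/n \to \infty$ upgrades this to $(a_1\cdots a_n)^{1/n} \to \infty$; admissibility of $\underline{s}$ is, by definition, admissibility of $(\aa,\bb)$; and $\Delta_n = \log R$ is a uniform positive constant. Taking $R$ large enough that $R$ exceeds the threshold $a = a(\Delta,q)$ of assumption~(d) in Theorem~\ref{thm:B}, we also obtain $\liminf_n a_n \ge R > a$. With all four hypotheses of Theorem~\ref{thm:B} in force, the upper bound for the packing dimension from Corollary~\ref{cor:dim} becomes
\[
\dim_P \II_{\underline s}(E_\ul) \le 1 + \limsup_{n\to\infty}\frac{(n+1)\log\log R}{(s_1+\cdots+s_n)\log R} = 1,
\]
because $\log R$ and $\log\log R$ are constants while $(s_1+\cdots+s_n)/n \to \infty$. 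Combined with the lower bound $\dim_H \ge 1$ from Corollary~\ref{cor:dim} and the universal inequality $\dim_H \le \dim_P$, we arrive at $\dim_H \II_{\underline s}(E_\ul) = \dim_P \II_{\underline s}(E_\ul) = 1$.

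The only delicate point is making the ``$R$ sufficiently large'' clause in~(b) precise: one has to verify that the threshold $a = a(\Delta,q)$ in Theorem~\ref{thm:B}(d), where $\Delta = \log R$ and $q \in (0,1)$ is fixed by the admissibility of $\underline{s}$, can be outgrown by $R$ itself. Tracking the role of the constant $a$ in the (yet to be given) proof of Theorem~\ref{thm:B}, one expects it to grow at most polynomially in $\log R$, so $a(\log R,q) \le R$ for all $R$ above some explicit threshold. Beyond this bookkeeping, both parts are essentially immediate consequences of the general estimates of Section~\ref{subsec:estimates} applied to the very special situation in which $\Delta_n$ is constant, which trivialises all the limsup/liminf expressions as soon as $(a_1\cdots a_n)^{1/n}\to\infty$. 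This is precisely the reason why, compared with \cite{sixsmith}, the slow-growth condition $s_{n+1}/(s_1+\cdots+s_n) \to 0$ can be dropped.
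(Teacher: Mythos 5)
Your proposal is correct and follows essentially the same route as the paper, which simply observes that $a_n=R^{s_n}$, $b_n=R^{s_n+1}$ give $a_n\ge R$, $a_1\cdots a_n=R^{s_1+\cdots+s_n}$, $\Delta_n=\log R$, and then invokes Theorem~\ref{thm:A} for (a) and Corollary~\ref{cor:dim} for (b). Your extra care about the threshold $a(\Delta,q)$ in assumption~(d) is a detail the paper glosses over, and your resolution (for large $R$ one has $\delta=\min(1,\log(1/\sqrt q))$ fixed, so $a$ depends only on $q$ and is eventually exceeded by $R$) is consistent with how the constant $a$ is actually used in Sections~\ref{sec:proof-B-prelim}--\ref{sec:proof-B-below}.
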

\begin{proof}
As noted above, we have $\dim \II_{\underline{s}}(E_\ul) = \dim I_\aa^\bb(E_\ul)$ for 
\[
a_n = R^{s_n}, \qquad b_n = R^{s_n + 1}.
\]
In particular, $a_n \ge R$, $a_1 \cdots a_n = R^{s_1+ \cdots + s_n}$ and $\Delta_n =\log R$. Hence, the assertions follow immediately from Theorem~\ref{thm:A} and Corollary~\ref{cor:dim}. 
\end{proof}

Note that the admissibility condition according to Definition~\ref{defn:admis} has the form
\[
s_{n+1} \le \frac{q R^{s_n} + \log|\lambda_{n+1}|}{\log R}
\]
for $0 < q < 1$, and in the non-autonomous case is satisfied provided $s_{n+1} \leq R^{qs_n}$, if $R$ is sufficiently large.
\subsection{Case \boldmath $R_s = R^{s^\kappa}$}

Consider now annular itineraries $\underline{s} = (s_n)_{n=0}^\infty$ with respect to the annuli 
\[
\AAA_s = \{z \in \C : R^{s^\kappa} \le |z| < R^{(s+1)^\kappa}\},
\]
for $s \ge 0$ and $R > 1$, $\kappa > 1$. 

\begin{thm}\label{thm:annular2} Suppose $\sup_{n\in\N} |\lambda_n| < \infty$, $\underline{s}$ is admissible and $R$ is sufficiently large. Then the following statements hold.
\begin{itemize} 
\item[(a)] 
If ${\displaystyle\limsup_{n\to\infty}}\frac{s_1^\kappa + \cdots + s_n^\kappa}{n} = \infty$, then $\dim_H \II_{\underline{s}}(E_\ul) \le 1$.

\item[(b)] 
If ${\displaystyle\lim_{n\to\infty}}s_n = \infty$, then:
\begin{align*}
\dim_H \II_{\underline{s}}(E_\ul) &= 1,\\
\dim_P \II_{\underline{s}}(E_\ul) &= 1 + \frac{\kappa-1}{\log R}\limsup_{n\to\infty}
\frac{\log s_{n+1}}{s_1^\kappa + \cdots + s_n^\kappa},\\
\dim_P \II_{\underline{s}}(E_\ul) &< 2 - \frac 1 \kappa.
\end{align*}

\end{itemize}
\end{thm}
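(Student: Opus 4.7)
The plan is to convert the annular-itinerary problem to the $I_\aa^\bb$ framework with $a_n = R^{s_n^\kappa}$, $b_n = R^{(s_n+1)^\kappa}$ and then invoke Theorems~\ref{thm:A} and~\ref{thm:thin}. Writing $\sigma_n := s_1^\kappa + \cdots + s_n^\kappa$, I have $\log(a_1\cdots a_n) = \sigma_n \log R$, and by the mean value theorem $\Delta_n = ((s_n+1)^\kappa - s_n^\kappa)\log R = \kappa s_n^{\kappa-1}\log R \cdot (1 + O(1/s_n))$ for large $s_n$, so $\log \Delta_n = (\kappa-1)\log s_n + \log(\kappa\log R) + o(1)$. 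Admissibility together with $\sup_n |\lambda_n| < \infty$ gives $s_{n+1}^\kappa \log R \le qR^{s_n^\kappa} + O(1)$, whence $\log s_{n+1} \le s_n^\kappa \log R/\kappa + O(1)$ and $\log \Delta_{n+1} \le (\kappa-1) s_n^\kappa \log R/\kappa + O(1)$.

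For part (a), I would verify the hypothesis of Theorem~\ref{thm:A} directly. Since $\log(a_1\cdots a_n) = \sigma_n \log R \ge s_n^\kappa \log R$, the estimate
\[
\frac{\log(a_1\cdots a_n) - \log \Delta_{n+1}}{n} \ge \frac{\sigma_n \log R}{\kappa n} - O(1/n)
\]
holds. Along a subsequence with $\sigma_n/n \to \infty$ (which exists by $\limsup\sigma_n/n = \infty$), the right-hand side tends to $\infty$, forcing $(\Delta_{n+1}/(a_1\cdots a_n))^{1/n} \to 0$; condition~\eqref{eq:assA} holds and Theorem~\ref{thm:A} yields $\dim_H \II_{\underline{s}}(E_\ul) \le 1$.

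For part (b), the assumption $s_n \to \infty$ yields $\log b_n/\log a_n = ((s_n+1)/s_n)^\kappa \to 1$, so Theorem~\ref{thm:thin} applies and gives $\dim_H = 1$ and $\dim_P = 1 + \limsup \log(\Delta_1\cdots\Delta_{n+1})/\log(a_1\cdots a_n)$. Expanding $\log(\Delta_1\cdots\Delta_{n+1}) = (\kappa-1)\sum_{k=1}^{n+1}\log s_k + O(n)$ and applying the Stolz--Ces\`aro Theorem twice---first via $\log s_n/s_n^\kappa \to 0$ to deduce $\sum_{k=1}^n \log s_k/\sigma_n \to 0$, and second via $1/s_n^\kappa \to 0$ to deduce $n/\sigma_n \to 0$---the limsup reduces to $\tfrac{\kappa-1}{\log R}\limsup_{n\to\infty} \log s_{n+1}/\sigma_n$, matching the claimed formula.

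The strict bound $\dim_P < 2 - 1/\kappa$ is where I expect the main difficulty. The plan is to exploit admissibility in both directions: forwards it refines to $\log s_{n+1} \le s_n^\kappa\log R/\kappa - D + o(1)$ with $D := (\log\log R - \log q)/\kappa > 0$ (for $R$ sufficiently large), while reversing admissibility yields $s_k^\kappa\log R \ge \kappa\log s_{k+1} + L' - O(1)$ with $L' := \log\log R - \log q > 0$. Summing the reversed inequalities over $k = 1, \dots, n$ gives the refined lower bound $\sigma_n\log R \ge \kappa\log s_{n+1} + nL' - O(n)$, and hence
\[
\frac{\log s_{n+1}}{\sigma_n} \le \frac{\log R}{\kappa + nL'/\log s_{n+1} + o(1)}.
\]
The hard part will be promoting this pointwise strict inequality to a strict bound on the limsup: if admissibility is nearly tight along a subsequence, $\log s_{n+1}$ blows up super-exponentially and $nL'/\log s_{n+1} \to 0$, so this estimate alone yields only $\limsup \le \log R/\kappa$. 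Extracting the required uniform positive gap will require iterating the reversed admissibility through a chain of iterated logarithms of $s_{n+1}$ and combining it with the forward bound to absorb a further $D$-sized contribution into the denominator of the final limsup.
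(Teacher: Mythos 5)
Your treatment of part (a) and of the first two assertions of part (b) is correct and follows essentially the same route as the paper: translate to $a_n = R^{s_n^\kappa}$, $b_n = R^{(s_n+1)^\kappa}$, verify condition~\eqref{eq:assA} (the paper routes this through Remark~\ref{rem:A} by showing $\limsup_n \frac{\log\Delta_{n+1}}{\log(a_1\cdots a_n)} \le \frac{\kappa-1}{\kappa} < 1$, while you check the condition directly along a subsequence with $\sigma_n/n \to \infty$; the content is identical), and then apply Theorem~\ref{thm:thin} together with the mean value estimate $\log\Delta_{n+1} = (\kappa-1)\log s_{n+1} + O(1)$ and Stolz--Ces\`aro to obtain the packing dimension formula. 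No issues there.

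The gap is the third assertion of (b), $\dim_P \II_{\underline{s}}(E_\ul) < 2 - \frac1\kappa$, which you explicitly leave open. You correctly observe that the admissibility bound $\log s_{n+1} \le \frac{s_n^\kappa}{\kappa}\log R + c_2$ combined with $\sigma_n \ge s_n^\kappa$ only yields $\limsup_n \frac{\log s_{n+1}}{\sigma_n} \le \frac{\log R}{\kappa}$, hence only the non-strict bound $\dim_P \le 2 - \frac1\kappa$, and that when admissibility is nearly saturated along a rapidly growing subsequence the correction terms ($nL'$, the constant $c_2$, and $\sigma_{n-1}/s_n^\kappa$) all wash out. Your proposed remedy --- ``iterating the reversed admissibility through a chain of iterated logarithms'' --- is not carried out, and it is doubtful it can succeed as stated: for the admissible sequence $s_{n+1} = \bigl\lfloor (q/\log R)^{1/\kappa} R^{s_n^\kappa/\kappa} \bigr\rfloor$ one has $\sigma_{n-1}/s_n^\kappa \to 0$ and $\frac{\log s_{n+1}}{\sigma_n} \to \frac{\log R}{\kappa}$, so the dimension formula you just established evaluates to exactly $2 - \frac1\kappa$. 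Thus no refinement of these particular estimates can produce a uniform strict gap, and your argument as written proves only $\dim_P \le 2 - \frac1\kappa$. For comparison, the paper disposes of this point in one sentence, deducing the strict inequality directly from \eqref{eq:ann_adm2}; that derivation likewise delivers only the non-strict bound, so your instinct that the strictness is the delicate point is sound --- but the proposal does not close it, and you should either supply a genuinely new argument for the strict inequality or state the conclusion with $\le$.
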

\begin{proof}
In this case we have $\dim \II_{\underline{s}}(E_\ul) = \dim I_\aa^\bb(E_\ul)$ for 
\[
a_n = R^{s_n^\kappa}, \qquad b_n = R^{(s_n + 1)^\kappa}.
\]
In particular, $a_n \ge R^\kappa$ and $a_1 \cdots a_n = R^{s_1^\kappa+ \cdots + s_n^\kappa}$. By the assumption $\sup_{n\in\N} |\lambda_n| < \infty$, the admissibility condition is equivalent to
\begin{equation}\label{eq:ann_adm}
s_{n+1} \le \Big(\frac{q}{\log R}\Big)^\frac{1}{\kappa}R^\frac{s_n^\kappa}{\kappa}
\end{equation}
for large $n$ and a constant $0 < q < 1$. Moreover,
\[
\frac{\log b_n}{\log a_n} = 
\bigg(1 + \frac{1}{s_n}\bigg)^\kappa.
\]
and
\[
\Delta_n  =  ((s_n+1)^\kappa - s_n^\kappa) \log R \ge (\kappa s_n^{\kappa -1})\log R \ge  \kappa \log R.
\]
By the Mean Value Theorem,
\begin{equation}\label{eq:<s<}
(\kappa - 1) \log s_{n+1} - c_1 \le \log\Delta_{n+1} \le (\kappa - 1) \log s_{n+1} + c_1
\end{equation}
for a constant $c_1 > 0$ and by \eqref{eq:ann_adm},
\begin{equation}\label{eq:ann_adm2}
\log s_{n+1} \le \frac{s_n^\kappa}{\kappa} \log R + c_2
\end{equation}
for a constant $c_2 > 0$. Furthermore, \eqref{eq:<s<} and \eqref{eq:ann_adm2} imply
\[
\limsup_{n\to\infty} \frac{\log\Delta_{n+1}}{\log(a_1 \cdots a_n)} \le \frac{\kappa - 1}{\kappa}\limsup_{n\to\infty} \frac{s_n^\kappa + \frac{\kappa}{\kappa - 1}\frac{c_1 + c_2(\kappa -1)}{\log R}}{s_1^\kappa + \cdots + s_n^\kappa}  \le\frac{\kappa - 1}{\kappa} < 1,
\]
which proves (a) by Theorem~\ref{thm:A}, since $\limsup_{n\to\infty} (a_1 \cdots a_n)^{1/n} = \infty$ by assumptions.

The first assertion of (b) follows from (a) and Corollary~\ref{cor:dim}.
To prove the other ones, note that if $s_n \to \infty$, then 
$\frac{\log b_n}{\log a_n} \to 1$, so by Theorem~\ref{thm:thin} and \eqref{eq:<s<},
\[
\dim_P \II_{\underline{s}}(E_\ul) = 1 + \frac{\kappa-1}{\log R}\limsup_{n\to\infty}
\frac{\log s_1 + \cdots + \log s_{n+1}}{s_1^\kappa + \cdots + s_n^\kappa}.
\]
Since
\[
\frac{\log s_1 + \cdots + \log s_n}{s_1^\kappa + \cdots + s_n^\kappa} \to 0,
\]
we have
\[
\dim_P \II_{\underline{s}}(E_\ul) = 1 + \frac{\kappa-1}{\log R}\limsup_{n\to\infty}
\frac{\log s_{n+1}}{s_1^\kappa + \cdots + s_n^\kappa},
\]
and by \eqref{eq:ann_adm2}, $\dim_P \II_{\underline{s}}(E_\ul) < 2 - \frac 1 \kappa$, which proves the second and third assertion of (b). 
\end{proof}

Finally, we provide examples of sets $\II_{\underline{s}}(E_\ul)$ with packing dimension larger than $1$.

\begin{cor}\label{cor:ann_dim_P-ex} Suppose $\sup_{n\in\N} |\lambda_n| < \infty$, $\lim_{n\to\infty}\frac{\log s_{n+1}}{s_n^\kappa} = \frac{d\log R}{\kappa - 1}$ for $d \in [0, 1 - \frac 1 \kappa)$ and $R$ is sufficiently large. Then $\underline{s}$ is admissible and $\dim_H \II_{\underline{s}}(E_\ul) = 1$, $\dim_P \II_{\underline{s}}(E_\ul) = 1 + d$. 
\end{cor}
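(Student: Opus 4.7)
The plan is to deduce Corollary~\ref{cor:ann_dim_P-ex} from Theorem~\ref{thm:annular2}(b), after verifying admissibility of $\underline{s}$ and computing the $\limsup$ that appears in the packing-dimension formula using the prescribed asymptotic of $\log s_{n+1}/s_n^\kappa$.

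First I would check admissibility. By the reformulation \eqref{eq:ann_adm} used in the proof of Theorem~\ref{thm:annular2}, since $\sup_n|\lambda_n|<\infty$ it suffices to exhibit some $q\in(0,1)$ with $\log s_{n+1}\le\frac{q}{\kappa}(\log R)\,s_n^\kappa+O(1)$ for large $n$. The hypothesis $d<1-\frac{1}{\kappa}$ rewrites as $\frac{d}{\kappa-1}<\frac{1}{\kappa}$, so I can pick $q\in(0,1)$ and a small $\varepsilon>0$ with $\frac{d}{\kappa-1}+\varepsilon<\frac{q}{\kappa}$. The assumed limit then delivers $\log s_{n+1}\le\bigl(\frac{d}{\kappa-1}+\varepsilon\bigr)(\log R)\,s_n^\kappa$ for large $n$, which is dominated by $\frac{q}{\kappa}(\log R)\,s_n^\kappa$ as soon as $R$ is large enough. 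Moreover, in the nontrivial case $d>0$ the same limit (together with $s_n\ge 1$) gives $\log s_{n+1}\ge c>0$ eventually, and iterating this forces $s_n\to\infty$ doubly-exponentially fast, so that the hypothesis of Theorem~\ref{thm:annular2}(b) is met.

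With $s_n\to\infty$ in hand, Theorem~\ref{thm:annular2}(b) immediately gives $\dim_H\II_{\underline{s}}(E_\ul)=1$ and reduces the packing dimension to $\dim_P\II_{\underline{s}}(E_\ul)=1+\frac{\kappa-1}{\log R}L$, where $L:=\limsup_{n\to\infty}\frac{\log s_{n+1}}{s_1^\kappa+\cdots+s_n^\kappa}$. The trivial bound $s_1^\kappa+\cdots+s_n^\kappa\ge s_n^\kappa$ combined with the hypothesis yields at once $L\le\lim_n\frac{\log s_{n+1}}{s_n^\kappa}=\frac{d\log R}{\kappa-1}$, which already settles the case $d=0$ (and proves $\dim_P\le 1+d$ in general). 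For $d>0$ the matching lower bound comes from the key observation that $\log s_n\sim\frac{d\log R}{\kappa-1}\,s_{n-1}^\kappa$ forces $s_n^\kappa/s_{n-1}^\kappa\to\infty$ doubly-exponentially, so the partial sum is asymptotic to its last term, $s_1^\kappa+\cdots+s_n^\kappa\sim s_n^\kappa$. Substituting gives $L=\frac{d\log R}{\kappa-1}$ and hence $\dim_P=1+d$.

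The main obstacle, modest as it is, is the last asymptotic --- that the partial sum of $s_k^\kappa$ is dominated by its final summand. It is a direct consequence of the doubly-exponential growth imposed by the hypothesis when $d>0$, after which the corollary reduces to a direct substitution into the formula of Theorem~\ref{thm:annular2}(b).
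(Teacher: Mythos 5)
Your proof is correct and takes the same route as the paper, whose own proof is the one-line remark that the corollary ``follows directly from Theorem~\ref{thm:annular2}(b) and \eqref{eq:ann_adm}''; you simply supply the details that the paper leaves implicit. In particular, your verification of \eqref{eq:ann_adm} from $\frac{d}{\kappa-1}<\frac{1}{\kappa}$ and your observation that doubly-exponential growth of $s_n$ forces $s_1^\kappa+\cdots+s_n^\kappa\sim s_n^\kappa$ (so that the $\limsup$ in the packing-dimension formula equals $\frac{d\log R}{\kappa-1}$ exactly, not just at most that) are precisely the computations needed.
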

\begin{proof}
Follows directly from the assertion (b) of Theorem~\ref{thm:annular2} and \eqref{eq:ann_adm}.
\end{proof}

The conditions of Corollary~\ref{cor:ann_dim_P-ex} are actually satisfied for some sequences $(s_n)_{n=0}^\infty$, as shown in the following example.

\begin{ex}\label{ex:ann_dim_P} If $s_{n+1} = R^{\frac d {\kappa - 1}s_n^\kappa}$ for $d \in [0, 1 - \frac 1 \kappa)$, then $\dim_H \II_{\underline{s}}(E_\ul) = 1$, $\dim_P \II_{\underline{s}}(E_\ul) = 1 + d$.
\end{ex}

\section{Proofs of Theorems~\ref{thm:A} and~\ref{thm:B} -- preliminaries}\label{sec:prelim}

We will use the notation
\[
\diam X = \sup\{|x-y|: x, y \in X\}
\]
and
\[
\dist(z, X) = \inf\{|z - x|: x \in X\}, \qquad \dist(X, Y) = \inf\{|x - y|: x \in X, y \in Y\}
\]
for $z \in \C$, $X, Y \subset \C$.

Let
\[
J_N = \{z \in \C: a_{N+n} \le |E_{\lambda_{N+n}} \circ \cdots \circ E_{\lambda_N}(z)| \le b_{N+n} \text{ for every } n \ge 0\}
\]
for $N \in \N$. By definition, 
\[
I_\aa^\bb(E_\ul) = J_1 \cup \bigcup_{N =2}^\infty (E_{\lambda_{N-1}} \circ \cdots \circ E_{\lambda_1})^{-1}(J_N)
\]
and
\[
J_{N_1} \subset (E_{\lambda_{N_2-1}} \circ \cdots \circ E_{\lambda_{N_1}})^{-1}(J_{N_2})
\]
for every $1 \le N_1 < N_2$. As $E_{\lambda_n}$ are non-constant holomorphic maps, we have
$\dim J_{N_1} \le \dim J_{N_2}$ for $N_1 < N_2$ and 
\begin{equation}\label{eq:dim}
\dim I_\aa^\bb(E_\ul) = \dim \Big(\bigcup_{N = 1}^\infty J_N\Big) = \sup_{N \in \N}\dim  J_N = \lim_{N \to \infty}\dim J_N,
\end{equation}
where $\dim$ denotes the Hausdorff or packing dimension. Therefore, to estimate the dimensions of the sets $I_\aa^\bb(E_\ul)$ it is sufficient to bound the suitable dimensions of $J_N$ for large $N$. 

From now on, we fix a large $N$ and write $J$ for $J_N$. For $n \ge 0$ let
\[
A_n = \log \frac{a_{N+n}}{|\lambda_{N+n}|}, \qquad B_n = \log \frac{b_{N+n}}{|\lambda_{N+n}|}
\]
and
\[
S_n = \{z\in\C: A_n \le \Re(z) \le B_n\}
\]
for $n \in \N$. Recall that
\[
B_n - A_n = \Delta_{N+n} = \log\frac{b_{N+n}}{a_{N+n}}.
\]
Note that 
\[
z \in S_n \iff a_{N+n} \le |E_{\lambda_{N+n}}(z)| \le b_{N+n},
\]
so
\begin{equation}
\label{eq:J}
J = \{z \in \C: z \in S_0, \; E_{\lambda_{N+n}} \circ \cdots \circ E_{\lambda_N}(z) \in S_{n+1} \text{ for every } n \ge 0\}.
\end{equation}
For a small $\delta > 0$ and $j, k \in \Z$ let
\begin{align*}
V_j^{(n)} &= \{z\in\C: j\delta - \log|\lambda_{N+n}| \le \Re(z) < (j+1)\delta - \log|\lambda_{N+n}|\},\\
H^{(n)}_k &= \{z\in\C: k\pi - \Arg (\lambda_{N+n}) \le \Im(z) < (k+1)\pi - \Arg (\lambda_{N+n})\}
\end{align*}
with $\Arg (\lambda_{N+n}) \in [0, 2\pi)$. Set
\[
K^{(n)}_{j,k} = V_j^{(n)} \cap H^{(n)}_k.
\]
Note that 
\begin{equation}\label{eq:<j<}
\text{if} \quad K^{(n)}_{j,k} \cap S_n \ne \emptyset, \quad \text{then} \quad
\frac{\log a_{N + n}}{\delta} - 1 < j < \frac{\log b_{N + n}}{\delta},  \quad \text{so} \quad e^{-\delta} a_{N + n} < e^{j\delta} < b_{N + n}. 
\end{equation}
We have
\[
E_{\lambda_{N+n}}(K^{(n)}_{j,k}) = U_{j,k}
\]
for
\[
U_{j,k} = \{z \in \C: e^{j\delta} \le |z| < e^{(j+1)\delta}, \; k\pi \le \Arg(z) < (k+1)\pi \text{ mod } 2\pi\}. 
\]
Note that
\[
U_{j,k + 2} = U_{j,k}. 
\]
Set
\[
\KK^{(n)} = \{K^{(n)}_{j,k} : j, k \in \Z\}.
\]
for $n \ge 0$ and
\begin{align*}
\KK_{j,k}^{(n)} &= \{K \in \KK^{(n)}: K \cap U_{j,k} \cap S_n \neq \emptyset\},\\
\tilde\KK_{j,k}^{(n)} &= \{K \in \KK^{(n)}: K \subset U_{j,k} \cap S_n\}
\end{align*}
for $n \ge 1$. Obviously,
\[
\tilde\KK_{j,k}^{(n)} \subset \KK_{j,k}^{(n)}.
\]
Let 
\[
Q_k^{(n)} = \{z \in \C: z \in S_n, \; \Delta_{N+n} k \le \Im(z) \le \Delta_{N+n}(k+1)\}
\]
for $n \ge 0$, $k \in \Z$ and
\[
\QQ_{j,k}^{(n)} = \{Q_l^{(n)} : Q_l^{(n)} \cap U_{j, k} \ne \emptyset,\: l \in \Z\}.
\]
for $n \ge 0$, $k, j \in \Z$.
\begin{figure}[!ht]
\begin{center}
\def\svgwidth{0.15\textwidth}
\begingroup%
  \makeatletter%
  \providecommand\color[2][]{%
    \errmessage{(Inkscape) Color is used for the text in Inkscape, but the package 'color.sty' is not loaded}%
    \renewcommand\color[2][]{}%
  }%
  \providecommand\transparent[1]{%
    \errmessage{(Inkscape) Transparency is used (non-zero) for the text in Inkscape, but the package 'transparent.sty' is not loaded}%
    \renewcommand\transparent[1]{}%
  }%
  \providecommand\rotatebox[2]{#2}%
  \newcommand*\fsize{\dimexpr\f@size pt\relax}%
  \newcommand*\lineheight[1]{\fontsize{\fsize}{#1\fsize}\selectfont}%
  \ifx\svgwidth\undefined%
    \setlength{\unitlength}{34.03345376bp}%
    \ifx\svgscale\undefined%
      \relax%
    \else%
      \setlength{\unitlength}{\unitlength * \real{\svgscale}}%
    \fi%
  \else%
    \setlength{\unitlength}{\svgwidth}%
  \fi%
  \global\let\svgwidth\undefined%
  \global\let\svgscale\undefined%
  \makeatother%
  \begin{picture}(1,3.61803777)%
    \lineheight{1}%
    \setlength\tabcolsep{0pt}%
    \put(0.42437878,0.29076926){\color[rgb]{0,0,0}\makebox(0,0)[lt]{\lineheight{1.25}\smash{\begin{tabular}[t]{l}$S_n$\end{tabular}}}}%
    \put(0.57900278,2.07748598){\color[rgb]{0,0,0}\makebox(0,0)[lt]{\lineheight{1.25}\smash{\begin{tabular}[t]{l}$K_{j,k}^{(n)}$\end{tabular}}}}%
    \put(0.41224201,1.47009062){\color[rgb]{0,0,0}\makebox(0,0)[lt]{\lineheight{1.25}\smash{\begin{tabular}[t]{l}$Q_k^{(n)}$\end{tabular}}}}%
    \put(-0.0121806,0.07004962){\color[rgb]{0,0,0}\makebox(0,0)[lt]{\lineheight{1.25}\smash{\begin{tabular}[t]{l}$A_n$\end{tabular}}}}%
    \put(0.86491356,0.06929629){\color[rgb]{0,0,0}\makebox(0,0)[lt]{\lineheight{1.25}\smash{\begin{tabular}[t]{l}$B_n$\end{tabular}}}}%
    \put(0,0){\includegraphics[width=\unitlength,page=1]{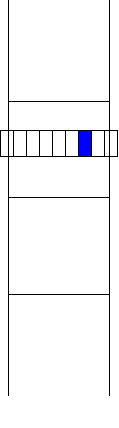}}%
  \end{picture}%
\endgroup%

\caption{The sets $S_n$, $Q_k^{(n)}$ and $K_{j,k}^{(n)}$.}\label{fig:s}
\end{center}
\end{figure}
Finally, let
\[
U_k = \bigcup_{j \in \Z} U_{j,k} =  \{z \in \C \setminus \{0\}: k\pi \le \Arg(z) < (k+1)\pi\}
\]
and 
\begin{equation}\label{eq:g_k}
g^{(n)}_k \colon  U_k \to H^{(n)}_k
\end{equation}
for $k \in \Z$ be inverse branches of $E_{\lambda_{N+n}}$ on $U_k$. Note that $g^{(n)}_k$ can be extended to any simply connected domain in $\C \setminus \{0\}$ containing $U_k$.

\begin{figure}[!ht]
\begin{center}
\def\svgwidth{0.98\textwidth}
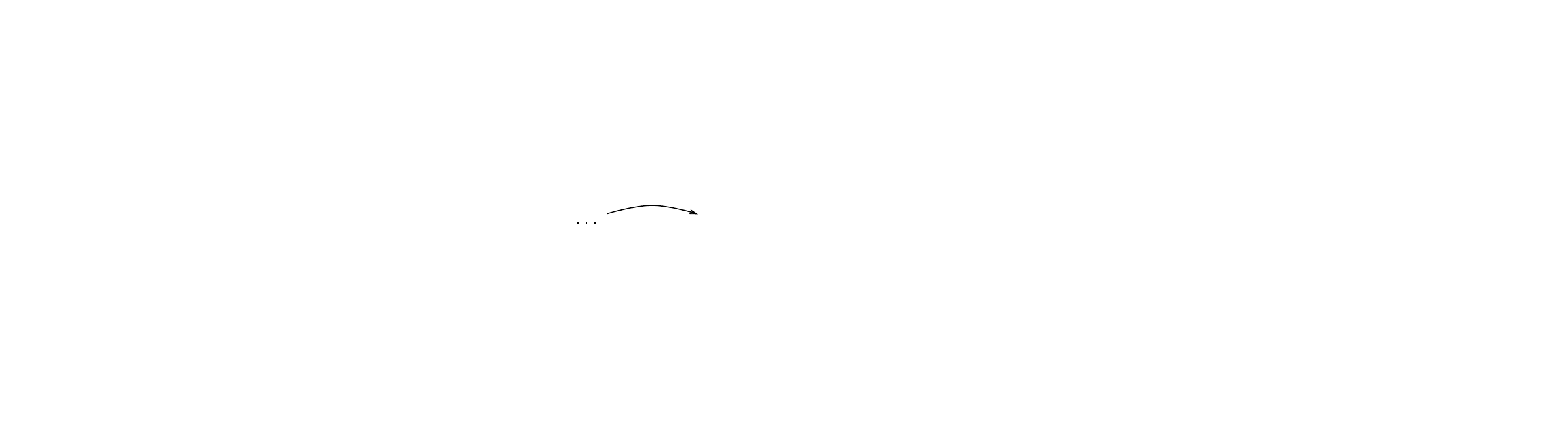
\caption{Successive images of the sets $K_{j,k}^{(n)}$.}\label{fig:k}
\end{center}
\end{figure}

\section{Proof of Theorem~\ref{thm:A}}\label{sec:proof-A}

Fix $j_0, k_0 \in \Z$ and take $j_1, \ldots, j_n \in \Z$, $k_1, \ldots, k_n \in \Z$ and $l \in \Z$ such that 
\[
K^{(1)}_{j_1,k_1} \in \KK^{(1)}_{j_0,k_0}, \ldots, K^{(n)}_{j_n,k_n} \in \KK^{(n)}_{j_{n-1}, k_{n-1}}, Q^{(n+1)}_l  \in \QQ^{(n+1)}_{j_n, k_n}.
\]
Define inductively
\begin{align*}
Q_{j_n, k_n, l} &= g^{(n)}_{k_n}(Q^{(n+1)}_l \cap U_{j_n, k_n}), &&\\
Q_{j_{m-1}, k_{m-1}, \ldots, j_n,k_n,l} &= g^{(m-1)}_{k_{m-1}}(Q_{j_m, k_m, \ldots, j_n,k_n,l} \cap U_{j_{m-1}, k_{m-1}}) &&\text{for } m = n, \ldots, 1.
\end{align*}
and let
\begin{align*}
\EE^{(n)} = \{&Q_{j_0, k_0, \ldots, j_n,k_n,l} : 
K^{(1)}_{j_1,k_1} \in \KK^{(1)}_{j_0,k_0}, \ldots, K^{(n)}_{j_n,k_n} \in \KK^{(n)}_{j_{n-1}, k_{n-1}}, Q^{(n+1)}_l  \in \QQ^{(n+1)}_{j_n, k_n},\\
&j_1, \ldots, j_n \in \Z, \; k_1, \ldots, k_n \in \Z, \; l \in \Z \}.
\end{align*}
for $n \in \N$.

\begin{lem}\label{lem:cover}
For every $n \in\N$ the family $\EE^{(n)}$ is a cover of $J \cap K^{(0)}_{j_0, k_0}$. 
\end{lem}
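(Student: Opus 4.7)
The plan is a direct orbit-following argument: given $z \in J \cap K^{(0)}_{j_0,k_0}$, I produce indices $j_1,k_1,\ldots,j_n,k_n,l$ pinpointing the member of $\EE^{(n)}$ that contains $z$. Set $z_0=z$ and $z_m = E_{\lambda_{N+m-1}} \circ \cdots \circ E_{\lambda_N}(z)$ for $m \ge 1$, so that the characterization \eqref{eq:J} of $J$ is exactly $z_m \in S_m$ for every $m \ge 0$.

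First I pick the indices in the forward direction, inductively. Since $z_0 \in K^{(0)}_{j_0,k_0}$ and $E_{\lambda_N}(K^{(0)}_{j_0,k_0}) = U_{j_0,k_0}$, we have $z_1 \in U_{j_0,k_0} \cap S_1$. As $\KK^{(1)}$ tiles $\C$, there is a unique pair $(j_1,k_1)$ with $z_1 \in K^{(1)}_{j_1,k_1}$; then $K^{(1)}_{j_1,k_1} \cap U_{j_0,k_0} \cap S_1 \ne \emptyset$, so $K^{(1)}_{j_1,k_1} \in \KK^{(1)}_{j_0,k_0}$. Repeating this step $n$ times, using $E_{\lambda_{N+m}}(K^{(m)}_{j_m,k_m}) = U_{j_m,k_m}$ at each stage, yields $(j_m,k_m)$ with $z_m \in K^{(m)}_{j_m,k_m}$ and $K^{(m)}_{j_m,k_m} \in \KK^{(m)}_{j_{m-1},k_{m-1}}$ for $m=1,\ldots,n$. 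One further application of $E_{\lambda_{N+n}}$ gives $z_{n+1} \in U_{j_n,k_n} \cap S_{n+1}$, and since the horizontal strips $Q^{(n+1)}_l$, $l \in \Z$, cover $S_{n+1}$, there is some $l$ with $z_{n+1} \in Q^{(n+1)}_l$, and then $Q^{(n+1)}_l \in \QQ^{(n+1)}_{j_n,k_n}$.

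Next I pull back through the inverse branches. The key observation is that $z_m \in K^{(m)}_{j_m,k_m} \subset H^{(m)}_{k_m}$, so the branch $g^{(m)}_{k_m}$ defined in \eqref{eq:g_k} is precisely the one sending $z_{m+1}$ back to $z_m$. Thus $z_n = g^{(n)}_{k_n}(z_{n+1}) \in g^{(n)}_{k_n}(Q^{(n+1)}_l \cap U_{j_n,k_n}) = Q_{j_n,k_n,l}$; inductively, using that $z_m \in U_{j_{m-1},k_{m-1}}$ (a consequence of $z_m = E_{\lambda_{N+m-1}}(z_{m-1}) \in E_{\lambda_{N+m-1}}(K^{(m-1)}_{j_{m-1},k_{m-1}})$), one obtains $z_{m-1} = g^{(m-1)}_{k_{m-1}}(z_m) \in Q_{j_{m-1},k_{m-1},\ldots,j_n,k_n,l}$. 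Taking $m=1$ gives $z = z_0 \in Q_{j_0,k_0,\ldots,j_n,k_n,l} \in \EE^{(n)}$, which is the desired conclusion.

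The argument is essentially bookkeeping and presents no real obstacle; the only point requiring care is matching each inverse branch $g^{(m)}_{k_m}$ with the horizontal strip $H^{(m)}_{k_m}$ that actually contains $z_m$, but this is automatic from the way the indices $k_m$ were chosen.
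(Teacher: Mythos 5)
Your proof is correct and follows essentially the same route as the paper's: choose the indices $(j_m,k_m)$ forward along the orbit so that $K^{(m)}_{j_m,k_m}\in\KK^{(m)}_{j_{m-1},k_{m-1}}$ and $Q^{(n+1)}_l\in\QQ^{(n+1)}_{j_n,k_n}$, then pull back through the branches $g^{(m)}_{k_m}$ by backward induction. Your explicit justification that $g^{(m)}_{k_m}$ is the branch inverting the orbit (because $z_m\in H^{(m)}_{k_m}$) is exactly the point the paper leaves implicit.
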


\begin{proof}
Take $z \in J \cap K^{(0)}_{j_0, k_0}$. By~\eqref{eq:J}, for every $m \ge 1$, $E_{\lambda_{N+m-1}} \circ \cdots \circ E_{\lambda_N}(z) \in K^{(m)}_{j_m,k_m} \cap S_m$ for some $j_m, k_m \in \Z$. Hence, for given $n \in \N$,
$E_{\lambda_{N+m-1}} \circ \cdots \circ E_{\lambda_N}(z) \in  K^{(m)}_{j_m,k_m} \cap U_{j_{m-1}, k_{m-1}} \cap S_m$ for $m = 1, \ldots, n$ and $E_{\lambda_{N+n}} \circ \cdots \circ E_{\lambda_N}(z) \in Q^{(n+1)}_l \cap U_{j_n, k_n}$ for some $l \in \Z$. Therefore, 
$K^{(m)}_{j_m,k_m} \in \KK^{(m)}_{j_{m-1},k_{m-1}}$ for $m = 1, \ldots, n$ and $Q^{(n+1)}_l  \in \QQ^{(n+1)}_{j_n, k_n}$. By induction, 
\[
g^{(m)}_{k_m}(E_{\lambda_{N+m}} \circ \cdots \circ E_{\lambda_N}(z)) = E_{\lambda_{N+m-1}} \circ \cdots \circ E_{\lambda_N}(z) \in Q_{j_m, k_m, \ldots, j_n,k_n,l} \cap U_{j_{m-1}, k_{m-1}}
\]
for $m = n, \ldots, 1$ and 
\[
g^{(0)}_{k_0}(E_{\lambda_N}(z)) = z \in Q_{j_0, k_0, \ldots, j_n,k_n,l}\in \EE^{(n)}.
\]
\end{proof}

By \eqref{eq:<j<} we can write
\begin{equation}\label{eq:EE_j}
\EE^{(n)} \subset \bigcup_{\frac{\log a_{N+1}}{\delta}-1  < j_1 < \frac{\log b_{N+1}}{\delta}} \cdots\bigcup_{\frac{\log a_{N+n}}{\delta} -1 < j_n < \frac{\log b_{N+n}}{\delta}} 
\EE^{(n)}_{j_1, \ldots, j_n},
\end{equation}
where
\[
\EE^{(n)}_{j_1, \ldots, j_n} = \{Q_{j_0, k_0, \ldots, j_n,k_n,l} \in \EE^{(n)} : 
k_1, \ldots, k_n \in \Z, \; l \in \Z \}.
\]
Take $j_1, \ldots, j_n$ as in \eqref{eq:EE_j}. If $Q_{j_0, k_0, \ldots, j_n,k_n,l} \in \EE^{(n)}_{j_1, \ldots, j_n}$, then $K^{(m)}_{j_m,k_m} \cap U^{(m-1)}_{j_{m-1}, k_{m-1}} \ne \emptyset$ for $m  = 1, \ldots, n$, so 
\[
K^{(m)}_{j_m,k_m} \subset \{z \in \C: |\Im(z)| < e^{(j_{m-1}+1)\delta}\}.
\]
Moreover, by \eqref{eq:<j<} and the assumption $\liminf a_n > 0$, we can assume $e^{j_{m-1}\delta} > a$ for a constant $a > 0$.
This implies
\[
\#\{k_m \in \Z : Q_{j_0, k_0, \ldots, j_n,k_n,l} \in \EE^{(n)}_{j_1, \ldots, j_n}\} \le c_1 e^{j_{m-1}\delta}
\]
for every $k_0, \ldots, k_{m-1}, k_{m+1}, \ldots, k_n, l \in \Z$ and some constant $c_1 > 0$, so
\[
\#\{(k_1, \ldots, k_n) \in \Z^n : Q_{j_0, k_0, \ldots, j_n,k_n,l} \in \EE^{(n)}_{j_1, \ldots, j_n}\} \le c_1^n e^{(j_0+ \cdots + j_{n-1})\delta}
\]
for every $l \in \Z$. Similarly, $Q^{(n+1)}_l \in \QQ^{(n+1)}_{j_n, k_n}$, so $Q^{(n+1)}_l \cap U^{(n)}_{j_n, k_n} \ne \emptyset$ and
\[
Q^{(n+1)}_l \subset \{z \in \C: |\Im(z)| < e^{j_n\delta} + \Delta_{N+n+1}\},
\]
which gives
\[
\#\{l \in \Z : Q_{j_0, k_0, \ldots, j_n,k_n,l} \in \EE^{(n)}_{j_1, \ldots, j_n}\} < \frac{e^{j_n\delta}}{\Delta_{N+n+1}} + 2
\]
for every $k_0, \ldots, k_n \in \Z$. We conclude that
\begin{equation}\label{eq:card}
\#\EE^{(n)}_{j_1, \ldots, j_n} \le c_1^n e^{(j_0 + \cdots +j_n)\delta}\Big(\frac{1}{\Delta_{N+n+1}} + \frac{2}{e^{j_n\delta}}\Big).
\end{equation}

Now we estimate the diameter of the sets $Q_{j_0, k_0, \ldots, j_n,k_n,l} \in \EE^{(n)}_{j_1, \ldots, j_n}$. We have
\[
\diam Q_{j_n, k_n, l} \le \sup_{U_{j_n, k_n}} |(g^{(n)}_{k_n})'| \diam (Q^{(n+1)}_l \cap U_{j_n, k_n}) \le \frac{1}{e^{j_n\delta}} \min(\sqrt{2}\Delta_{N+n+1}, e^{(j_n+1)\delta}). 
\]
Note also that any two points $z_1, z_2$ in $U_{j, k}$ for $j, k \in \Z$, can be joined within $U_{j, k}$ by a circle arc of length at most $2\pi|z_1-z_2|$. Hence,
\begin{align*}
\diam Q_{j_{m-1}, k_{m-1}, \ldots, j_n,k_n,l} &\le 2\pi  \sup_{U_{j_{m-1}, k_{m-1}}} |(g^{(m-1)}_{k_{m-1}})'| \diam Q_{j_m, k_m, \ldots, j_n,k_n,l}\\ &= \frac{2\pi}{e^{j_{m-1}\delta}} \diam Q_{j_m, k_m, \ldots, j_n,k_n,l}
\end{align*}
for $m = 1, \ldots, n$, which implies
\begin{equation} \label{eq:diam2}
\diam Q_{j_0, k_0, \ldots, j_n,k_n,l} \le (2\pi)^n \frac{\min(\sqrt{2}\Delta_{N+n+1}, e^{(j_n+1)\delta})}{e^{(j_0 + \cdots + j_n)\delta}}.
\end{equation}

Fix $D > 1$ and let  
\[
P^{(n)}_{j_1, \ldots, j_n} = \sum_{Q \in \EE^{(n)}_{j_1, \ldots, j_n}} (\diam Q)^D, \qquad P^{(n)} = \sum_{Q \in \EE^{(n)}} (\diam Q)^D
\]
for $n \in \N$.
By \eqref{eq:card} and \eqref{eq:diam2},
\[
P^{(n)}_{j_1, \ldots, j_n} \le c_2^n
\frac{\Big(\frac{1}{\Delta_{N+n+1}} + \frac{2}{e^{j_n\delta}}\Big)\left(\min(\sqrt{2}\Delta_{N+n+1}, e^{(j_n+1)\delta}\right)^D}{e^{(j_0 + \cdots + j_n))\delta(D-1)}} \le c_3^n\bigg(\frac{\Delta_{N+n+1}}{e^{(j_0 + \cdots + j_n)\delta}}\bigg)^{D-1}
\]
for some constants $c_2, c_3 > 0$ (the latter estimate is by a straightforward calculation).
Hence, by \eqref{eq:EE_j},
\begin{align*}
P^{(n)} &\le c_3^n \sum_{\frac{\log a_{N+1}}{\delta}-1  < j_1 < \frac{\log b_{N+1}}{\delta}} 
\cdots \sum_{\frac{\log a_{N+n}}{\delta} -1 < j_n < \frac{\log b_{N+n}}{\delta}}
\bigg(\frac{\Delta_{N+n+1}}{e^{(j_0 + \cdots + j_n)\delta}}\bigg)^{D-1}\\
&\le c_4^n \bigg(\frac{\Delta_{N+n+1}}{a_N \cdots a_{N+n}}\bigg)^{D-1}
\end{align*}
for some constant $c_4 > 0$. By assumption, for infinitely many $n$ we have
\[
\frac{\Delta_{N+n+1}}{a_N \cdots a_{N+n}} < \varepsilon_n^n,
\]
where $\varepsilon_n > 0$, $\varepsilon_n \to 0$,
so
\[
P^{(n)} \le (c_4\varepsilon_n^{D-1})^n < \frac{1}{2^n}
\]
for infinitely many $n$, so $\liminf_{n\to\infty} P^{(n)} = 0$. Recall that by Lemma~\ref{lem:cover}, $\EE^{(n)}$ is a sequence of covers of $J \cap K^{(0)}_{j_0, k_0}$. Hence, by the definition of the Hausdorff measure we have $\dim_H(J \cap K^{(0)}_{j_0, k_0}) \leq D$ for any $j_0, k_0 \in \Z$ and $D > 1$, so in fact 
$\dim_H J \leq 1$. By \eqref{eq:dim}, $\dim_H I_\aa^\bb(E_\ul) \le 1$, which proofs Theorem~\ref{thm:A}. 

\section{Proof of Theorem~\ref{thm:B} -- preliminaries}\label{sec:proof-B-prelim}

Observe first that if $N$ is chosen large enough, then the assumptions of Theorem~\ref{thm:B} can be written as
\begin{align}
\label{eq:ass-adm}&|\lambda_{N+n+1}| e^{-q a_{N+n}} \le a_{N+ n+1}\le |\lambda_{N+n+1}|e^{q a_{N+n}},\\
\label{eq:ass-infty1}&\lim_{n\to\infty} (a_N \cdots a_{N+n})^{1/n} = \infty,\\\label{eq:ass-a>} &a_{N+n} > a,\\ 
\label{eq:ass-Delta>}&\Delta_{N+n} > \Delta
\end{align}
for $n \ge 0$ and some constants $0 < q < 1$, $a > 0$, $\Delta > 0$, where $a$ is sufficiently large depending on $q$ and $\Delta$ (to be specified later). We fix $\delta$ used in the definition of the sets $K^{(n)}_{j,k}$ to be a positive number such that
\begin{equation}\label{eq:delta<}
\delta < \min(\Delta/4, 1), \qquad \sqrt{q} e^\delta < 1.
\end{equation}

For $n \ge 0$, $j \in \Z$ let
\[
D^{(n)}_{j} = \min (B_n, e^{(j+1)\delta}) - \max (A_n, -e^{(j+1)\delta}).
\]

The following lemma estimates the size of sets $U_{j,k} \cap S_n$. 

\begin{lem}\label{lem:D} There exist $c_1, c_2, c_3 > 0$ such that for every $n\ge 1$, $j, k \in \Z$, 
\begin{itemize}
\item[(a)]
$U_{j,k} \cap S_n$ is contained in a rectangle with horizontal and vertical sides, of width $D^{(n)}_{j}$ and height $e^{(j+1)\delta}$.
\item[(b)] If $K^{(n-1)}_{j, k} \cap S_{n-1} \ne \emptyset$, then 
$U_{j,k} \cap S_n$ contains a rectangle with horizontal and vertical sides, of width $c_1 D^{(n)}_{j}$ and height $c_1 e^{j\delta}$. Moreover, $\KK^{(n)}_{j, k}$ is non-empty and contains a set $K^{(n)}_{j', k'}$ with $j', k' \in 2\Z$.
\item[(c)] If $K^{(n-1)}_{j, k} \cap S_{n-1} \ne \emptyset$, then 
\[
c_3 \le \frac{1}{c_2} \min(\Delta_{N+n}, e^{j\delta}) \le  D^{(n)}_j \le c_2 \min(\Delta_{N+n}, e^{j\delta})  \le c_2e^{j\delta}.
\]
\end{itemize}
\end{lem}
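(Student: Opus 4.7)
My plan is to establish each part in turn, with (b) as the most involved. The common thread is that, under the hypothesis $K^{(n-1)}_{j,k}\cap S_{n-1}\ne\emptyset$, admissibility \eqref{eq:ass-adm} together with the choice $\sqrt{q}e^\delta<1$ from \eqref{eq:delta<} forces the strip $S_n$ to meet the half-annulus $U_{j,k}$ generously.

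Part (a) is immediate from the definitions: for $z\in U_{j,k}\cap S_n$ we have $\Re z\in[A_n,B_n]$ and $|\Re z|\le|z|<e^{(j+1)\delta}$, so $\Re z$ ranges over an interval of length $D^{(n)}_j$; while $|\Im z|\le|z|<e^{(j+1)\delta}$ together with the sign restriction forced by $k\pi\le\Arg(z)<(k+1)\pi\pmod{2\pi}$ yields the height bound. For (c), the upper bound follows from $D^{(n)}_j\le B_n-A_n=\Delta_{N+n}$ and $D^{(n)}_j\le 2e^{(j+1)\delta}$. For the lower bound, \eqref{eq:<j<} gives $a_{N+n-1}<e^{(j+1)\delta}$, and then \eqref{eq:ass-adm} yields $A_n\le qe^{(j+1)\delta}$ and $B_n\ge-qe^{(j+1)\delta}$. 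A four-way case split on whether $A_n\ge-e^{(j+1)\delta}$ and $B_n\le e^{(j+1)\delta}$ produces $D^{(n)}_j\ge\min(\Delta_{N+n},(1-q)e^{(j+1)\delta})$, and hence $D^{(n)}_j\ge\min((1-q)e^\delta,1)\min(\Delta_{N+n},e^{j\delta})$; the constant lower bound $c_3$ then follows from $\Delta_{N+n}>\Delta$ in \eqref{eq:ass-Delta>} and $e^{j\delta}>e^{-\delta}a$ via \eqref{eq:<j<} and \eqref{eq:ass-a>}.

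For (b), I would first use the sharper consequence $qe^\delta<e^{-\delta}$ of $\sqrt{q}e^\delta<1$ to upgrade the above to $A_n<e^{(j-1)\delta}$ and $B_n>-e^{(j-1)\delta}$, so that $S_n$ reaches well into the middle of $U_{j,k}$. After reflecting so that $k=0$, I would inscribe an axis-aligned rectangle with vertical range $[\alpha e^{j\delta},\beta e^{j\delta}]$ for suitable constants $0<\alpha<1<\beta<e^\delta$ depending only on $\delta$, and with horizontal range a sub-interval of length $c_1 D^{(n)}_j$ contained in $[\max(A_n,-e^{(j+1)\delta}),\min(B_n,e^{(j+1)\delta})]$. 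The check $x^2+y^2\in[e^{2j\delta},e^{2(j+1)\delta}]$ at every corner reduces to a short case analysis on the position of $[A_n,B_n]$ relative to $[-e^{(j+1)\delta},e^{(j+1)\delta}]$; I expect arranging a single $c_1$ that works uniformly across these configurations to be the main technical obstacle, but it can be resolved by shrinking $c_1$ further. For the cell-containment claim, part (c) and \eqref{eq:ass-a>} give that the constructed rectangle has dimensions $\ge c_1 c_3$ and $\ge c_1 e^{-\delta}a$, so choosing $a$ large enough (depending on $q$ and $\Delta$) makes both dimensions exceed $3\delta$ and $3\pi$ respectively, and the rectangle then contains a grid cell $K^{(n)}_{j',k'}$ with both indices $j',k'$ even.
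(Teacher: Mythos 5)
Your overall route is the same as the paper's. Parts (a) and (c) match: the paper also truncates $[A_n,B_n]$ to $[\tilde A_n,\tilde B_n]=[\max(A_n,-e^{(j+1)\delta}),\min(B_n,e^{(j+1)\delta})]$, derives $\tilde A_n<\sqrt q\,e^{j\delta}<e^{j\delta}$ and $\tilde B_n>-e^{j\delta}$ from admissibility and $\sqrt q\,e^\delta<1$, and your four-way case split is a correct filling-in of the step the paper leaves terse. The problems are in (b). First, checking $x^2+y^2\ge e^{2j\delta}$ only at the corners is not valid: the minimum of $x^2+y^2$ over an axis-aligned rectangle in the upper half-plane is attained at the point nearest the origin, which lies in the interior of the bottom edge (at $(0,y_1)$) whenever $0$ belongs to the horizontal range --- precisely the dangerous case. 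Second, and more seriously, constants $\alpha<1<\beta$ depending \emph{only} on $\delta$ cannot work. Nothing prevents $[\tilde A_n,\tilde B_n]$ from being an interval of bounded length around $0$ (only $\Delta_{N+n}>\Delta$ is assumed, and $|\lambda_{N+n}|$ may be large) while $e^{j\delta}\ge e^{-\delta}a$ is huge; then the whole bottom edge $\{(x,\alpha e^{j\delta}):x\in[\tilde A_n,\tilde B_n]\}$ lies strictly inside the inner circle. The vertical placement of the rectangle must depend on where the strip sits. The paper gets this adaptivity for free by taking $z_0$ on the mid-circle $|z|=\frac{e^\delta+1}{2}e^{j\delta}$ above the midpoint $(\tilde A_n+\tilde B_n)/2$ and inscribing the rectangle in the upper half of $\D\bigl(z_0,\frac{e^\delta-1}{2}e^{j\delta}\bigr)$, a disc contained in the annulus by construction. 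Your ``case analysis on the position of $[A_n,B_n]$'' can be pushed through to the same effect, but then $\alpha,\beta$ are no longer constants and that is where the actual work lies.

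One more point on the cell-containment claim: the width of the rectangle is bounded below by (a constant times) $\min(\Delta_{N+n},e^{j\delta})$, which for large $a$ reduces to a multiple of $\Delta$ --- a quantity that does \emph{not} grow with $a$. That the width exceeds $4\delta$ is a consequence of the choice $\delta<\Delta/4$ in \eqref{eq:delta<}, not of enlarging $a$; enlarging $a$ is what makes the height $\sim e^{j\delta}$ exceed $4\pi$. This is also why the paper records the width as $\min\bigl(D^{(n)}_{j},\sqrt2\,\tfrac{e^\delta-1}{2}e^{j\delta}\bigr)\ge\min\bigl(\Delta_{N+n},\sqrt2\,\tfrac{e^\delta-1}{2}e^{j\delta}\bigr)$ rather than as $c_1D^{(n)}_j$ before making this comparison.
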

\begin{proof} Let
\[
\tilde A_n = \max (A_n, -e^{(j+1)\delta}), \qquad \tilde B_n = \min (B_n, e^{(j+1)\delta})
\]
for $n \ge 0$, $j \in \Z$.
By the definition of $D^{(n)}_{j}$, 
\begin{equation}\label{eq:tildeAB}
D^{(n)}_{j} = \tilde B_n - \tilde A_n, \qquad \{z \in \C: \Re(z) \in [\tilde A_n, \tilde B_n]\} \subset S_n
\end{equation}
and
\[
U_{j,k} \cap S_n \subset 
\begin{cases}\{z \in \C: \Re(z) \in [\tilde A_n, \tilde B_n], \:\Im(z) \in [0, e^{(j+1)\delta}]\}&\text{if $k$ is even} \\
\{z \in \C: \Re(z) \in [\tilde A_n, \tilde B_n], \:\Im(z) \in [-e^{(j+1)\delta}, 0]\}&\text{if $k$ is odd}
\end{cases},
\]
which together with \eqref{eq:<j<}, \eqref{eq:ass-a>} and \eqref{eq:ass-Delta>} gives the assertion (a). Note also that by \eqref{eq:ass-adm} and \eqref{eq:delta<}, we have 
\begin{equation}\label{eq:<A<}
\tilde A_n < q e^{(j+1)\delta} < \sqrt{q} e^{j\delta} < e^{j\delta}, \qquad 
\tilde B_n > -q e^{(j+1)\delta} > -\sqrt{q} e^{j\delta} > - e^{j\delta}.
\end{equation}
This together with \eqref{eq:<j<}, \eqref{eq:ass-a>} and \eqref{eq:ass-Delta>} gives the assertion (c). Moreover, \eqref{eq:<A<} implies that the vertical line $\{z \in \C : \Re(z) = (\tilde A_n + \tilde B_n)/2\}$ intersects the circle $\bd\D(0, ((e^\delta + 1)/2)e^{j\delta})$ at some point $z_0$. Then the upper (resp.~lower) half of the disc $\D(z_0, ((e^\delta - 1)/2)e^{j\delta})$ is contained in $U_{j,k}$ for even (resp.~odd) $k$. It follows that $U_{j,k} \cap S_n$ contains a rectangle of width $\min( D^{(n)}_{j}, \sqrt{2}\frac{e^{\delta}-1}{2}e^{j\delta})$ and height $\frac{\sqrt{2}}{2}\frac{e^{\delta}-1}{2}e^{j\delta}$. This together with the assertion (c) proves the first part of (b). To show the second part of (b), it is enough to notice that by \eqref{eq:<j<}, \eqref{eq:ass-a>}, \eqref{eq:ass-Delta>}, \eqref{eq:delta<}, \eqref{eq:<A<} and the definition of $D^{(n)}_j$,
\[
\min\Big( D^{(n)}_{j}, \sqrt{2}\frac{e^{\delta}-1}{2}e^{j\delta}\Big) \ge \min\Big(\Delta_{N+n}, \sqrt{2}\frac{e^{\delta}-1}{2}e^{j\delta}\Big) > 4\delta, \qquad \frac{\sqrt{2}}{2}\frac{e^{\delta}-1}{2}e^{j\delta} > 4 \pi,
\]
if $a$ is chosen sufficiently large.
\end{proof}

We will also need the following technical lemma. 

\begin{lem}\label{lem:triangle} Suppose $K^{(n-1)}_{j, k} \cap S_{n-1} \ne \emptyset$ for some $n \ge 1$, $j, k \in \Z$ and
\begin{equation}\label{eq:B}
|A_n + e^{j\delta}| > \varepsilon  e^{j\delta}, \qquad |B_n - e^{j\delta}| > \varepsilon e^{j\delta} 
\end{equation}
for some constant $\varepsilon > 0$. Then for every $z \in U_{j,k} \cap S_n$ there exists a right triangle $T \subset U_{j, k} \cap S_n$, with one of the vertices at $z$, a horizontal leg of length $c_1 D^{(n)}_j$ and a vertical leg of length $c_2 e^{j\delta}$, containing at least one element of $\tilde \KK_{j,k}^{(n)}$, where the constants $c_1, c_2 > 0$ depend only on $a, \varepsilon$ and $q$.
\end{lem}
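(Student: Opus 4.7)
The plan is as follows. Applying Lemma~\ref{lem:D}(b), fix an inner rectangle $R \subset U_{j,k} \cap S_n$ of width $c_1 D^{(n)}_j$ and height $c_1 e^{j\delta}$, and let $p^\star$ denote its centre; this point serves as a ``target'' towards which the triangle based at $z$ will be pointed. Given $z = x + iy \in U_{j,k} \cap S_n$, I would set $s_x = \mathrm{sign}(\Re(p^\star) - x)$ and $s_y = \mathrm{sign}(\Im(p^\star) - y)$, both in $\{-1, +1\}$, and take $T$ to be the closed axis-aligned right triangle with vertex at $z$ whose horizontal and vertical legs have lengths $c'_1 D^{(n)}_j$ and $c'_2 e^{j\delta}$ in the directions $s_x$ and $s_y$ respectively. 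The constants $c'_1, c'_2 > 0$ will depend only on $\varepsilon, q, a$ and will be fixed to be small enough that the containments below go through.

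To see $T \subset S_n$, note that $R \subset S_n$ gives $\Re(p^\star) \in [A_n + c_1 D^{(n)}_j/2, \, B_n - c_1 D^{(n)}_j/2]$ while $x \in [A_n, B_n]$; by the choice of $s_x$ the horizontal projection of $T$ fits inside $[A_n, B_n]$ as soon as $c'_1 \le c_1/2$. The more delicate containment $T \subset U_{j,k}$ is where hypothesis \eqref{eq:B} enters: its purpose is precisely to exclude the degenerate tangencies $A_n = -e^{j\delta}$ and $B_n = e^{j\delta}$ at which the strip $S_n$ is tangent to the inner circle $\{|z| = e^{j\delta}\}$ of the half-annulus. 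Outside of those tangent configurations, \eqref{eq:B} furnishes an explicit $\varepsilon e^{j\delta}$ gap between the boundary of $S_n$ and the critical tangent points; a short case analysis on the positions of $A_n$ and $B_n$ relative to $\pm e^{j\delta}$, combined with the elementary observation that the distance from any $z' \in U_{j,k}$ to the inner arc is controlled from below by a constant multiple of both $|\Im z'|$ and $e^{j\delta} - |\Re z'|$ (when the latter is positive), lets one verify that all three vertices of $T$, and consequently the whole convex hull, lie in $U_{j,k}$, provided $c'_1, c'_2$ are chosen sufficiently small depending on $\varepsilon$ and $q$.

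Finally, by Lemma~\ref{lem:D}(c) and \eqref{eq:ass-a>}, the legs of $T$ have lengths at least $c'_1 c_3$ and $c'_2 a$ respectively. By further shrinking $\delta$ in \eqref{eq:delta<} and enlarging $a$ (both permissible adjustments in the standing assumptions), these lengths exceed $4\delta$ and $4\pi$, so $T$ contains an axis-aligned sub-rectangle of dimensions $2\delta \times 2\pi$, which in turn contains at least one complete cell $K^{(n)}_{j', k'} \in \KK^{(n)}$; since $T \subset U_{j,k} \cap S_n$, this cell lies in $\tilde\KK^{(n)}_{j,k}$. \textbf{The main obstacle} is the middle paragraph: the boundary arcs of $\partial U_{j,k}$ are curved, so the hypotenuse of $T$ could in principle escape through an arc, and hypothesis \eqref{eq:B} is what rules out precisely those configurations in which no axis-aligned right triangle with uniformly large legs could fit at $z$.
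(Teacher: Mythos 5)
The paper offers no proof of this lemma (it is explicitly ``left to the reader''), so the only question is whether your sketch actually closes the gap; it does not, and the failure is exactly in the middle paragraph that you yourself flag as the main obstacle. The concrete problem is that your direction rule is wrong. Take $k$ even, write $r=e^{j\delta}$, $R_o=e^{(j+1)\delta}$, and suppose $A_n\le -R_o$, $B_n\ge R_o$ (consistent with \eqref{eq:<A<}, and \eqref{eq:B} holds with $\varepsilon=e^{\delta}-1$; here $D^{(n)}_j=2R_o$). The rectangle of Lemma~\ref{lem:D}(b) sits over the line $\Re(w)=(\tilde A_n+\tilde B_n)/2=0$, so $\Re(p^\star)\approx 0$. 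Now take $z=r$, the point of $U_{j,k}\cap S_n$ on the positive real axis lying on the inner circle. Your rule gives $s_x=-1$, $s_y=+1$. But \emph{every} axis-parallel right triangle with a vertex at $z$ and horizontal orientation $-1$ has an edge meeting the open disc $\D(0,r)$: if a horizontal leg emanates from $z$, it lies on the real axis and reaches abscissae $<r$; in the remaining configuration the hypotenuse runs from $z$ to $z+(-L_1,L_2)$, and $\frac{d}{ds}\bigl|z+s(-L_1+iL_2)\bigr|^2\big|_{s=0}=-2rL_1<0$, so it immediately dips inside the inner circle. At this $z$ the admissible triangle must point horizontally \emph{away} from $p^\star$ (outward, to the right); the correct sign of $s_x$ depends on the position of $z$ relative to the two bounding circles and the straight boundary pieces, not on a single fixed target point, and this case distinction is precisely the tedious content of the lemma.

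A second, related flaw: ``all three vertices of $T$ lie in $U_{j,k}$, and consequently the whole convex hull'' is a non sequitur, since $U_{j,k}$ is not convex --- it lies on the concave side of the inner arc $\{|w|=e^{j\delta}\}$, so a triangle with all vertices in $U_{j,k}$ can still have its hypotenuse (or a horizontal edge near the top of the inner circle) cut through $\D(0,e^{j\delta})$. You acknowledge this but do not resolve it; a correct argument has to bound the modulus along each edge of $T$ from below by $e^{j\delta}$ (containment in the outer disc and in the half-plane really does follow from convexity, but the inner arc must be checked edge by edge). The remaining parts of your outline --- containment in the strip $S_n$ via the width of $[\tilde A_n,\tilde B_n]$, and the final step that a right triangle with legs $\ge 4\delta$ and $\ge 4\pi$ contains a $2\delta\times 2\pi$ rectangle and hence a full cell of $\tilde\KK^{(n)}_{j,k}$ --- are fine, but without a correct choice of the triangle's orientation and a genuine verification against the inner circle, the proof is not complete.
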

The proof of Lemma~\ref{lem:triangle}, using \eqref{eq:<A<} and \eqref{eq:B}, is an elementary, but a bit tedious exercise  and is left to the reader. 

The next lemma provides basic estimates of the derivative of the inverse branches of $E_{\lambda_{N+n}} \circ \cdots \circ E_{\lambda_N}$. 

\begin{lem}\label{lem:branch} For every $n \in \N$ and $j_0, \ldots, j_n \in \Z$, $k_0, \ldots, k_n \in \Z$, such that $K^{(0)}_{j_0,k_0} \cap S_0 \ne \emptyset$, $K^{(1)}_{j_1,k_1} \in \KK^{(1)}_{j_0, k_0}, \ldots, K^{(n)}_{j_n,k_n} \in \KK^{(n)}_{j_{n-1}, k_{n-1}}$, the branch
\[
g^{(0)}_{k_0} \circ \cdots \circ g^{(n-1)}_{k_{n-1}}
\]
is defined on $K^{(n)}_{j_n,k_n}$, for some extensions of the branches from \eqref{eq:g_k}, with the distortion bounded by a constant independent of $n$ and $j_0, \ldots, j_n$, $k_0, \ldots, k_n$. Moreover,
\[
\frac{c^{-n}}{e^{\delta (j_0 + \cdots + j_{n-1})}} < \left|(g^{(0)}_{k_0} \circ \cdots \circ g^{(n-1)}_{k_{n-1}})'|_{K^{(n)}_{j_n,k_n}}\right| < \frac{c^n}{e^{\delta (j_0 + \cdots + j_{n-1})}} < \frac{1}{2^n}
\]
for some constant $c > 0$.
\end{lem}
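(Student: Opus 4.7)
The strategy is a joint induction on $m=n-1,n-2,\ldots,0$: at each step, extend $g_{k_m}^{(m)}$ across the region where the previous steps' image lives, and propagate the derivative and distortion estimates together. Each branch has the closed form $g_{k_m}^{(m)}(w)=\log(w/\lambda_{N+m})$ with imaginary part pinned in the strip $H_{k_m}^{(m)}$, and it extends univalently from $U_{k_m}$ to any simply connected subdomain of $\C\setminus\{0\}$ avoiding a suitable ray from the origin. The chain condition $K_{j_{m+1},k_{m+1}}^{(m+1)}\in\KK_{j_m,k_m}^{(m+1)}$ guarantees that $K_{j_{m+1},k_{m+1}}^{(m+1)}\cap U_{j_m,k_m}\neq\emptyset$, so the successive extensions can be glued consistently along the chain.

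For the derivative estimate, fix $w_n\in K_{j_n,k_n}^{(n)}$ and define $w_m=g_{k_m}^{(m)}(w_{m+1})$ for $m=n-1,\ldots,0$. Assuming inductively that $w_m$ lies in a bounded enlargement of $K_{j_m,k_m}^{(m)}$, one has $\Re(w_m)=j_m\delta-\log|\lambda_{N+m}|+O(1)$, so $|w_{m+1}|=|\lambda_{N+m}|e^{\Re(w_m)}$ is comparable to $e^{j_m\delta}$ up to a multiplicative constant. Since $(g_{k_m}^{(m)})'(w)=1/w$, multiplying the $n$ factors $1/|w_{m+1}|$ yields the claimed two-sided bound with $c=e^\delta$ after absorbing the $O(1)$ errors. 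The extra inequality $c^n/e^{\delta\sum j_m}<1/2^n$ rewrites as $\delta\sum_{m=0}^{n-1}j_m>n(\log 2+\delta+\log c)$, which follows from \eqref{eq:<j<} and \eqref{eq:ass-a>} once $a$ is chosen sufficiently large.

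For the distortion, take $w_n,w_n'\in K_{j_n,k_n}^{(n)}$ and let $(w_m),(w_m')$ denote their iterates. The identity $g_{k_m}^{(m)}(w)-g_{k_m}^{(m)}(w')=\log(w/w')$ together with $|w_{m+1}|\gtrsim e^{j_m\delta}$ gives $|w_m-w_m'|\le 2|w_{m+1}-w_{m+1}'|/|w_{m+1}|$ provided $w_{m+1}'$ is close enough to $w_{m+1}$ relative to $|w_{m+1}|$. Iterating this estimate backwards yields the exponential contraction
\[
|w_m-w_m'|\lesssim \pi\exp\bigl(-\delta(j_m+\cdots+j_{n-1})\bigr).
\]
The log-distortion of the full composition equals $\bigl|\sum_{m=0}^{n-1}\log(w_{m+1}/w_{m+1}')\bigr|$, which by the contraction is dominated by a convergent geometric series of ratio $e^{-\delta J}<1$, where $J=\min_m j_m$ can be made as large as desired by \eqref{eq:<j<} and \eqref{eq:ass-a>}. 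This bound is independent of $n$ and of $(j_m),(k_m)$, as required.

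The main difficulty is the mutual dependence between the three pieces: to know the extensions are well-defined one must first know that the images of $K_{j_n,k_n}^{(n)}$ under the partial compositions remain inside simply connected subdomains of $\C\setminus\{0\}$, but proving this requires precisely the contraction of the last step, which in turn assumes the extensions are already built. The resolution is to propagate all three statements simultaneously in the induction on $m$, with Lemma~\ref{lem:D} providing the geometric inputs — the shape of $U_{j_m,k_m}\cap S_{m+1}$ and the size control on the $K_{j_{m+1},k_{m+1}}^{(m+1)}$ — needed to close the induction step.
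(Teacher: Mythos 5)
Your proposal is correct and follows essentially the same route as the paper: a backward induction that simultaneously establishes well-definedness of the extended branches (on bounded enlargements of the cells, justified by $|w|\gtrsim e^{-\delta}a$ being large), the contraction of diameters, the derivative bound via $|(g^{(m)}_{k_m})'|=1/|w_{m+1}|\asymp e^{-j_m\delta}$, and the distortion bound as a convergent sum of logarithmic increments. The only differences are cosmetic (the paper contracts by the uniform factor $1/2$ and bounds distortion by the product $\prod_m(1+d_0 2^{-m-1})$, while you use the sharper factor $e^{-\delta j_m}$ and sum the logs), so no further comment is needed.
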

\begin{proof}
Take $j_0, \ldots, j_n$, $k_0, \ldots, k_n$ as in the lemma. By assumption, 
\begin{equation}\label{eq:KUS}
K^{(m-1)}_{j_{m-1},k_{m-1}} \cap S_{m-1} \ne \emptyset, \qquad K^{(m)}_{j_m,k_m} \cap U_{j_{m-1},k_{m-1}} \ne \emptyset
\end{equation}
for $m = 1, \ldots, n$. Let 
\[
d_0 = \sqrt{\pi^2 + \delta^2}
\]
be the diameter of the sets $K \in \bigcup_{n=0}^\infty\KK^{(n)}$. The first assertion of \eqref{eq:KUS} together with \eqref{eq:<j<} and \eqref{eq:ass-a>} implies 
\begin{equation}\label{eq:e^a}
e^{j_{m-1}\delta} \ge e^{-\delta} a_{N + m-1} \ge e^{-\delta} a > 2d_0+2,
\end{equation}
if $a$ is chosen sufficiently large. Hence, 
\begin{equation}\label{eq:U>1}
U_{j_{m-1},k_{m-1}} \subset \{z \in \C: |z| \ge e^{-\delta} a\}
\end{equation}
and the branch $g_{k_{m-1}}^{(m-1)}$ on $U_{j_{m-1},k_{m-1}}$ can be extended to
\[
\hat U_{j_{m-1},k_{m-1}} = \{z \in \C: \dist(z, U_{j_{m-1},k_{m-1}}) <  2d_0\}.
\]
Let
\[
V_{m,m} = K^{(m)}_{j_m,k_m}, \qquad  V_{m,s} = g^{(m)}_{k_m} \circ \cdots \circ g^{(s-1)}_{k_{s-1}}(K^{(s)}_{j_s,k_s})
\]
for $m = 0, \ldots, n$, $s = m+1, \ldots, n$. Now we show, by backward induction on $m$, that 
\begin{equation}\label{eq:ind}
\begin{aligned}
&V_{m,s} \text{ are well-defined} &&\text{for }  s = m, \ldots, n,\\
&\diam V_{m,s} \le \frac{d_0}{2^{s-m}}&&\text{for }  s = m, \ldots, n,\\
&V_{m,s} \subset \hat U_{j_{m-1},k_{m-1}}  &&\text{for }  s = m, \ldots, n,\\
&V_{m, s} \cap V_{m, s+1} \ne \emptyset   &&\text{for }  s = m, \ldots, n-1.
\end{aligned}
\end{equation}
For $m = n$, \eqref{eq:ind} follows from \eqref{eq:KUS}. Suppose, by induction, that \eqref{eq:ind} holds for some $1 \le m \leq n$. Then $V_{m-1,s} = g^{(m-1)}_{k_{m-1}}(V_{m,s})$ for $s = m, \ldots, n$ are well defined. Take $s \in \{m-1 , \ldots, n\}$. By \eqref{eq:e^a} and the fourth assertion of \eqref{eq:ind}, 
\[
V_{m,s} \subset \Big\{z \in \C: |z| \ge e^{-\delta} a - d_0\Big(1 + \cdots + \frac{1}{2^m}\Big)\Big\} \subset \{z \in \C: |z| \ge 2\},
\]
so 
\begin{equation}\label{eq:diamV}
\diam V_{m-1,s} \le \sup_{V_{m,s}}|(g^{(m-1)}_{k_{m-1}})'| \diam V_{m,s} \le \frac{\diam V_{m,s}}{2} < \frac{d_0}{2^{s-m+1}}.
\end{equation}
By \eqref{eq:KUS} and the fourth assertion of \eqref{eq:ind}, $V_{m-1, s} \cap V_{m-1, s+1} \ne \emptyset$ for $s = m-1, \ldots, n-1$. Hence, by \eqref{eq:KUS}, to have $V_{m-1,s} \subset \hat U_{j_{m-2},k_{m-2}}$ for $s = m-1 , \ldots, n$, it is enough to check that 
\[
\diam V_{m-1,m-1} + \cdots + \diam V_{m-1, n} < 2d_0,
\]
which follows from \eqref{eq:diamV}. This ends the inductive proof of \eqref{eq:ind}.

By \eqref{eq:ind} for $m = 0, s = n$ we conclude that the branch $g^{(0)}_{k_0} \circ \cdots \circ g^{(n-1)}_{k_{n-1}}$ is defined on $K^{(n)}_{j_n,k_n}$. The distortion of the branch is estimated in a standard way. By \eqref{eq:e^a}, \eqref{eq:U>1} and \eqref{eq:ind}, for $z_1, z_2 \in V_{m, n}$ we have
\[
\frac{|(g^{(m-1)}_{k_{m-1}})'(z_1)|}{|(g^{(m-1)}_{k_{m-1}})'(z_2)|} = \frac{|z_2|}{|z_1|} \le 1 + \frac{|z_1 - z_2|}{|z_1|} \le 1 + \frac{\diam V_{m, n}}{e^{-\delta}a - 2 d_0} \le 1 + \frac{d_0}{2^{n-m+1}}. 
\]
Hence, for $z_1, z_2 \in V_{m, n}$,
\[
\frac{|(g^{(0)}_{k_0} \circ \cdots \circ g^{(n-1)}_{k_{n-1}})'(z_1)|}{|(g^{(0)}_{k_0} \circ \cdots \circ g^{(n-1)}_{k_{n-1}})'(z_2)|} \le \prod_{m=1}^n \left(1 + \frac{d_0}{2^{m+1}}\right)\le \exp \left(\sum_{m=1}^n \frac{d_0}{2^{m+1}}\right) < e^{d_0/2},
\]
so the distortion of the branch is universally bounded. Finally, \eqref{eq:e^a} and the third assertion of \eqref{eq:ind} give
\begin{align*}
\frac{c^{-n}}{e^{ (j_0 + \cdots + j_{n-1})\delta}} \le \prod_{m=1}^n\frac{1}{e^{(j_{m-1}+1)\delta} + 2d_0} &< \left|(g^{(0)}_{k_0} \circ \cdots \circ g^{(n-1)}_{k_{n-1}})'|_{K^{(n)}_{j_n,k_n}}\right|\\
&< \prod_{m=1}^n\frac{1}{e^{j_{m-1}\delta} - d_0} \le \frac{c^n}{e^{ (j_0 + \cdots + j_{n-1})\delta}} < \frac{1}{Q^n}
\end{align*}
for $c= \max(e^\delta(1 + 2d_0/a), 1/(1 - 2d_0 e^{\delta}/ a))$, $Q = e^{-\delta} a/c$. Choosing $a$ sufficiently large, we can assume $Q \ge 2$. 
\end{proof}

\section{Proof of Theorem~\ref{thm:B} -- estimate from above}\label{sec:proof-B-above}

In this section we prove the upper estimate in Theorem~\ref{thm:B}. First, we will do this under an additional technical assumption
\begin{equation}\label{eq:<B<}
|A_n + e^{j\delta}| > \varepsilon  e^{j\delta}, \qquad |B_n - e^{j\delta}| > \varepsilon e^{j\delta} 
\end{equation}
for every $n \ge 0$, $j\in \Z$ and some constant $\varepsilon > 0$. In the last subsection we will show how to reduce the general situation to this case.

\subsection{\boldmath Construction of the measure $\mu$}

Take $j_0, k_0 \in \Z$ such that $J \cap K^{(0)}_{j_0,k_0}\ne \emptyset$. In particular, we have $K^{(0)}_{j_0,k_0} \cap S_0 \ne \emptyset$. By Lemma~\ref{lem:branch}, we can define families $\FF^{(n)}$, $n \ge 0$, setting
\[
\FF^{(0)} = \{K_{j_0, k_0}\} \qquad \text{for} \quad K_{j_0, k_0} = K^{(0)}_{j_0, k_0}
\]
and 
\begin{align*}
\FF^{(n)} = \{&K_{j_0, k_0, \ldots, j_n, k_n} = g^{(0)}_{k_0} \circ \cdots \circ g^{(n-1)}_{k_{n-1}}(K^{(n)}_{j_n, k_n}):\\
&K^{(1)}_{j_1,k_1} \in \KK^{(1)}_{j_0, k_0}, \ldots, K^{(n)}_{j_n,k_n} \in \KK^{(n)}_{j_{n-1}, k_{n-1}}, \; j_1, \ldots, j_n \in \Z, \; k_1, \ldots, k_n \in \Z\}
\end{align*}
for $n \in \N$.

Since, for given $n$, the sets $K^{(n)}_{j, k} \in \KK^{(n)}$ are pairwise disjoint, the sets $K_{j_0, k_0, \ldots, j_n, k_n} \in \FF^{(n)}$ are pairwise disjoint. Moreover, for every set $K_{j_0, k_0, \ldots, j_n, k_n} \in \FF^{(n)}$ and $j_{n+1}, k_{n+1} \in \Z$ we have
\begin{equation}\label{eq:cap}
\begin{aligned}
K_{j_0, k_0, \ldots, j_{n+1}, k_{n+1}} \in \FF^{(n+1)} &\iff  K^{(n+1)}_{j_{n+1},k_{n+1}} \in \KK^{(n+1)}_{j_n, k_n}\\ &\iff K_{j_0, k_0, \ldots, j_{n+1}, k_{n+1}} \cap K_{j_0, k_0, \ldots, j_n, k_n} \ne \emptyset.
\end{aligned}
\end{equation}
For $K_{j_0, k_0, \ldots, j_n, k_n} \in \FF^{(n)}$ let
\[
N_{j_0, k_0, \ldots, j_n, k_n} = \#\{(j_{n+1}, k_{n+1}) : K_{j_0, k_0, \ldots, j_{n+1}, k_{n+1}} \in \FF^{(n+1)}\} = \#\KK^{(n+1)}_{j_n, k_n}.
\]
By \eqref{eq:<j<}, \eqref{eq:ass-a>} and Lemma~\ref{lem:D},  
\begin{equation}\label{eq:N<}
0 < N_{j_0, k_0, \ldots, j_n, k_n} \leq c_1 D^{(n+1)}_{j_n} e^{j_n\delta},
\end{equation}
for some constant $c_1 > 0$, and if $K_{j_0, k_0, \ldots, j_{n+1}, k_{n+1}} \in \FF^{(n+1)}$, then
\begin{equation}\label{eq:diamK'}
\frac{\diam K_{j_0, k_0, \ldots, j_{n+1}, k_{n+1}}}{\diam K_{j_0, k_0, \ldots, j_n, k_n}} \le c \sup_{U_{j_n, k_n}}|(g^{(n)}_{k_n})'| < \frac{ce^\delta}{a} < \frac{1}{2}
\end{equation}
for a constant $c > 0$, provided $a$ is chosen sufficiently large.

Let
\[
K_\infty = \overline{\bigcap_{n=0}^\infty \bigcup\FF^{(n)}}.
\]
In the same way as for Lemma~\ref{lem:cover}, we show
\begin{equation}
\label{lem:JinF}
J \cap K^{(0)}_{j_0,k_0} \subset K_\infty.
\end{equation}

For every $n \ge 0$ and $K_{j_0, k_0, \ldots, j_n, k_n}\in \FF^{(n)}$ choose a point 
\[
z_{j_0, k_0, \ldots, j_n, k_n} \in K_{j_0, k_0, \ldots, j_n, k_n}
\] 
and note that by \eqref{eq:cap} and \eqref{eq:diamK'}, if $K_{j_0, k_0, \ldots, j_m, k_m} \in \FF^{(m)}$ for some $m > n$, then
\begin{equation}\label{eq:u-u<}
|z_{j_0, k_0, \ldots, j_m, k_m} - z_{j_0, k_0, \ldots, j_n, k_n}| < \Big(1 + \cdots + \frac{1}{2^{m-n}}\Big)\diam K_{j_0, k_0, \ldots, j_n, k_n} < \frac{d_0}{2^{n-1}}.
\end{equation}
Define a sequence of Borel probability measures $\mu_n$, $n \ge 0$, setting
\begin{align*}
\mu_0 &= \nu_{z_{j_0, k_0}},\\
\mu_{n+1} &= \sum_{K_{j_0, k_0, \ldots, j_n, k_n} \in \FF^{(n)}} \; \sum_{(j_{n+1}, k_{n+1}) : K_{j_0, k_0, \ldots, j_{n+1}, k_{n+1}} \in \FF^{(n+1)}} \frac{\nu_{z_{j_0, k_0, \ldots, j_{n+1}, k_{n+1}}}}{N_{j_0, k_0} \cdots N_{j_0, k_0, \ldots, j_n, k_n}},
\end{align*}
where $\nu_z$ denotes the Dirac measure at $z$. By definition, for every $K_{j_0, k_0, \ldots, j_n, k_n} \in \FF^{(n)}$,
\[
\mu_{n+1}(\{z_{j_0, k_0, \ldots, j_n, k_n, j_{n+1}, k_{n+1}}: K_{j_0, k_0, \ldots, j_n, k_n, j_{n+1}, k_{n+1}} \in \FF^{(n+1)}\} = \mu_n(\{z_{j_0, k_0, \ldots, j_n, k_n}\}),
\]
so by induction, using \eqref{eq:u-u<}, we obtain
\begin{equation}\label{eq:mu_n+1}
\begin{aligned}
\mu_m(\D(z_{j_0, k_0, \ldots, j_n, k_n}, 2 \diam K_{j_0, k_0, \ldots, j_n, k_n}) &\ge \mu_n(\{z_{j_0, k_0, \ldots, j_n, k_n}\})\\
&= \frac{1}{N_{j_0, k_0} \cdots N_{j_0, k_0, \ldots, j_{n-1}, k_{n-1}}}
\end{aligned}
\end{equation}
for every $m \ge n$. By \eqref{eq:u-u<},
\[
\supp \mu_n = \{z_{j_0, k_0, \ldots, j_n, k_n}: K_{j_0, k_0, \ldots, j_n, k_n} \in \FF^{(n)}\} \subset \D(z_{j_0, k_0}, 2d_0).
\]
Hence, the sequence $\mu_n$ converges weakly along a subsequence to a Borel probability measure $\mu$ with support in $\overline{\D(z_{j_0, k_0}, 2d_0)}$. 

Take $K_{j_0, k_0, \ldots, j_n, k_n} \in \FF^{(n)}$. By \eqref{eq:N<} and \eqref{eq:mu_n+1},
\begin{equation}\label{eq:mu>}
\mu(\hat K_{j_0, k_0, \ldots, j_n, k_n}) \ge  \frac{c_1^{-n}}{D^{(1)}_{j_0} \cdots D^{(n)}_{j_{n-1}} e^{(j_0 + \cdots +j_{n-1})\delta}}
\end{equation} 
for
\[
\hat K_{j_0, k_0, \ldots, j_n, k_n} = \{z \in \C: \dist(z, K_{j_0, k_0, \ldots, j_n, k_n}) \le 2 \diam K_{j_0, k_0, \ldots, j_n, k_n})\}.
\]
 
\subsection{\boldmath Estimate of the local dimension of $\mu$}

Since every point in the support of $\mu$ is a limit of points from $\supp \mu_{n_s}$ for some $n_s \to \infty$, taking a suitable subsequence and using \eqref{eq:u-u<} we obtain
\begin{align*}
\supp \mu \subset \{&z \in \C: z = \lim_{n\to\infty} z_{j_0, k_0, \ldots, j_n, k_n}, \text{ where } j_1, k_1, j_2, k_2, \ldots \in \Z\\
&\text{and } K_{j_0, k_0, \ldots, j_n, k_n} \in \FF^{(n)} \text{ for every } n \ge 0\}.
\end{align*}
The same argument show
\begin{equation}\label{eq:Kinmu}
K_\infty \subset \supp \mu.
\end{equation}

Take $z = \lim_{n\to\infty} z_{j_0, k_0, \ldots, j_n, k_n}\in \supp \mu$, where $K_{j_0, k_0, \ldots, j_n, k_n} \in \FF^{(n)}$ for every $n \ge 0$. For simplicity, denote
\[
d_n = \diam K_{j_0, k_0, \ldots, j_n, k_n}, \qquad z_n = z_{j_0, k_0, \ldots, j_n, k_n}.
\]
By \eqref{eq:u-u<}, we have 
\begin{equation}\label{eq:z-u}
|z - z_n| \le 2 d_n.
\end{equation}
Let
\[
r_n = C d_n
\]
for a large constant $C > 0$. Note that by \eqref{eq:diamK'}, the sequence $r_n$ is strictly decreasing to $0$.

Now we will estimate $\mu(\D(z, r))$ for a small $r$. Let $n$ be such that 
\[
r_{n+1} \le r < r_n
\]
and let
\[
R = \frac{r}{\sqrt{C} d_{n+1}}.
\]
Note that if $r$ varies in $[r_{n+1}, r_n)$, then $R$ varies in $[R_-^{(n)}, R_+^{(n)})$ for
\[
R_-^{(n)} = \sqrt{C}, \qquad
R_+^{(n)} = \sqrt{C}\frac{d_n}{d_{n+1}}.
\]
By Lemma~\ref{lem:branch}, 
\begin{equation}\label{eq:<r<}
\frac{\sqrt{C}}{c_2^{n+1}} \frac{R}{e^{(j_0 + \cdots +j_n)\delta}} < r < c_2^{n+1}\sqrt{C} \frac{R}{e^{(j_0 + \cdots +j_n)\delta}}
\end{equation}
and
\begin{equation}\label{eq:R>1}
\frac{\sqrt{C}}{c_2} e^{j_n\delta} \le R_+^{(n)} \leq  c_2\sqrt{C} e^{j_n\delta}
\end{equation}
for some constant $c_2 > 0$. Enlarging additionally $C$, by Lemma~\ref{lem:D} and \eqref{eq:R>1} we can assume 
\begin{equation}\label{eq:D>R}
\frac{c_3}{\sqrt{C}} R^{(n)}_- < D^{(n+1)}_{j_n} <  R^{(n)}_+
\end{equation}
for some constant $c_3 > 0$. 

Let
\[
w = E_{\lambda_{N+n}} \circ \cdots \circ E_{\lambda_N}(z_{n+1}).
\]
By definition, $w \in K^{(n+1)}_{j_{n+1},k_{n+1}} \in \KK^{(n+1)}_{j_n, k_n}$.
Take $K^{(n+1)}_{j'_{n+1}, k'_{n+1}} \in \KK^{(n+1)}_{j_n,k_n}$ such that $K^{(n+1)}_{j'_{n+1}, k'_{n+1}} \subset \D(w, R)$. By \eqref{eq:cap}, $K_{j_0, k_0, \ldots, j_n, k_n, j'_{n+1}, k'_{n+1}}\in \FF^{(n+1)}$ and by Lemma~\ref{lem:branch}, 
there exists a constant $c_4 > 0$ such that
\[
|z_{j_0, k_0, \ldots, j_n, k_n, j'_{n+1}, k'_{n+1}} - z_{n+1}| < c_4 R d_{n+1}, \qquad \diam K_{j_0, k_0, \ldots, j_n, k_n, j'_{n+1}, k'_{n+1}} < c_4 d_{n+1}. 
\]
Using this together with \eqref{eq:z-u} we obtain
\begin{equation}
\label{eq:hatKin}
\begin{aligned}
\hat K_{j_0, k_0, \ldots, j_n, k_n, j'_{n+1}, k'_{n+1}} &\subset 
\D(z, (c_4R + c_4 +2) d_{n+1})\\ 
&= \D\Big(z, \frac{c_4}{\sqrt{C}}r + \frac{c_4+2}{C}r_{n+1} \Big)\\
&\subset \D\Big(z, \Big(\frac{c_4}{\sqrt{C}} + \frac{c_4+2}{C}\Big)r\Big) \subset \D(z,r).
\end{aligned}
\end{equation}if $C$ is chosen sufficiently large. 

By \eqref{eq:<B<} and Lemma~\ref{lem:D}, there exist $u \in K^{(n+1)}_{j_{n+1},k_{n+1}} \cap U_{j_n, k_n} \cap S_{n+1}$ and a right triangle $T \subset U_{j_n, k_n} \cap S_{n+1}$, with one of the vertices at $u$, a horizontal leg of length $c D^{(n+1)}_{j_n}$ and a vertical leg of length $c' e^{j_n\delta}$, for some constants $c, c' > 0$, containing at least one element of $\KK_{j_n,k_n}^{(n+1)}$. Note also that Lemma~\ref{lem:branch} implies that if $K^{(n+1)}_{j'_{n+1}, k'_{n+1}}, K^{(n+1)}_{j''_{n+1}, k''_{n+1}} \in \KK_{j_n,k_n}^{(n+1)}$ and $\dist(K^{(n+1)}_{j'_{n+1}, k'_{n+1}}, K^{(n+1)}_{j''_{n+1}, k''_{n+1}}) > c_5$ for a sufficiently large constant $c_5 > 0$, then $\hat K_{j_0, k_0, \ldots, j_n, k_n, j'_{n+1}, k'_{n+1}}$ and $\hat K_{j_0, k_0, \ldots, j_n, k_n, j''_{n+1}, k''_{n+1}}$ are disjoint. Using these facts and noting that $R \ge \sqrt{C}$ for a large $C$, we show by elementary geometry considerations that $\D(w, R)$ contains at least $M$ sets $K^{(n+1)}_{j'_{n+1}, k'_{n+1}} \in \KK_{j_n,k_n}^{(n+1)}$, such that $\hat K_{j_0, k_0, \ldots, j_n, k_n, j'_{n+1}, k'_{n+1}}$ are pairwise disjoint, where
\[
M =
\begin{cases}
c_6 R^2& \text{if } R \le D^{(n+1)}_{j_n}\\
c_6 D^{(n+1)}_{j_n} R& \text{if } R > D^{(n+1)}_{j_n}
\end{cases}
\]
for some constant $c_6 > 0$. By \eqref{eq:mu>} and \eqref{eq:hatKin},
\[
\mu(\D(z, r)) \ge 
\begin{cases} 
\frac{c_6c_1^{-(n+1)} R^2}{D^{(1)}_{j_0} \cdots D^{(n+1)}_{j_n}e^{(j_0 + \cdots +j_n)\delta}}& \text{if } R \le D^{(n+1)}_{j_n}\\
\frac{c_6 c_1^{-(n+1)} R}{D^{(1)}_{j_0} \cdots D^{(n)}_{j_{n-1}} e^{(j_0 + \cdots +j_n)\delta}}& \text{if } R > D^{(n+1)}_{j_n}
\end{cases},
\]
so by \eqref{eq:<r<},
\begin{equation}\label{eq:log<}
\frac{\log \mu(\D(z, r))}{\log r}\le 1 + h_n(R),
\end{equation}
where
\begin{equation}\label{eq:h_n}
h_n(x) = 
\begin{cases}\frac{\log(D^{(1)}_{j_0} \cdots D^{(n+1)}_{j_n}) - \log 
x+ c_7 n}{(j_0 + \cdots +j_n)\delta - \log x - c_7 n}& \text{if } x \le D^{(n+1)}_{j_n}\\
\frac{\log(D^{(1)}_{j_0} \cdots D^{(n)}_{j_{n-1}}) + c_7 n}{(j_0 + \cdots +j_n)\delta - \log x - c_7 n}& \text{if } x > D^{(n+1)}_{j_n} 
\end{cases}
\end{equation}
for $x \in [R^{(n)}_-, R^{(n)}_+)$ and some constant $c_7 > 0$, which can be chosen arbitrary large. Note that 
by \eqref{eq:<j<} and \eqref{eq:ass-infty1}, we have
\begin{equation}\label{eq:ass-infty}
\frac{j_0 + \cdots +j_n}{n} \to \infty \qquad \text{as } n \to \infty.
\end{equation}
Together with \eqref{eq:R>1}, this implies that the denominators in \eqref{eq:h_n} are positive for large $n$, so $h_n$ is well-defined.

Now we estimate the infimum and supremum of the function $h_n$.

\begin{lem}\label{lem:h}
\begin{align*}
&\lim_{n\to\infty}\left|\inf_{[R^{(n)}_-, R^{(n)}_+)} h_n -  \frac{\log(D^{(1)}_{j_0} \cdots D^{(n)}_{j_{n-1}})}{(j_0 + \cdots +j_n)\delta - \log D^{(n+1)}_{j_n}}\right| = 0,\\
&\lim_{n\to\infty}\left|\sup_{[R^{(n)}_-, R^{(n)}_+)} h_n - \max\left(\frac{\log(D^{(1)}_{j_0} \cdots D^{(n)}_{j_{n-1}})}{(j_0 + \cdots +j_{n-1})\delta}, \frac{\log(D^{(1)}_{j_0} \cdots D^{(n+1)}_{j_n})}{(j_0 + \cdots +j_n)\delta}\right)\right| = 0.
\end{align*}
\end{lem}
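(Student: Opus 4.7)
The plan is to exploit the piecewise rational-in-$\log x$ structure of $h_n$. Writing $t=\log x$, on the first piece $[R^{(n)}_-, D^{(n+1)}_{j_n}]$ we have $h_n(t)=(A+c_7 n-t)/(B_n-c_7 n-t)$ where $A=\log C_n+T_n$, $C_n=D^{(1)}_{j_0}\cdots D^{(n)}_{j_{n-1}}$, $B_n=(j_0+\cdots+j_n)\delta$ and $T_n=\log D^{(n+1)}_{j_n}$; this is a linear-fractional function of $t$, hence monotone. On the second piece $(D^{(n+1)}_{j_n}, R^{(n)}_+)$, $h_n(t)=(\log C_n+c_7 n)/(B_n-c_7 n-t)$ is clearly increasing in $t$, once $c_7$ is chosen large enough that the numerator is positive (possible by the uniform lower bound $D^{(m)}_{j_{m-1}}\ge c_3$ in Lemma~\ref{lem:D}(c)). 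Consequently, the extrema of $h_n$ on $[R^{(n)}_-, R^{(n)}_+)$ are attained at the three distinguished points $R^{(n)}_-$, $D^{(n+1)}_{j_n}$ and $R^{(n)}_+$.

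Next I would evaluate $h_n$ at each boundary point and identify its limit. Direct substitution gives $h_n(D^{(n+1)}_{j_n})=(\log C_n+c_7 n)/(B_n-T_n-c_7 n)$, which differs from the target $\alpha_n:=\log C_n/(B_n-T_n)$ by a quantity bounded (after elementary manipulation) by a multiple of $n(B_n+|\log C_n|)/[(B_n-T_n)^2]$. Since $B_n-T_n\ge B_{n-1}-O(1)$ by Lemma~\ref{lem:D}(c), and $B_{n-1}/n\to\infty$ by \eqref{eq:ass-infty}, this error tends to zero. At $x=R^{(n)}_-$, where $\log R^{(n)}_-=\tfrac{1}{2}\log C$ is constant, the same kind of estimate yields the limit $\gamma_n:=(\log C_n+T_n)/B_n$. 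Finally, Lemma~\ref{lem:branch} tells us that $d_n/d_{n+1}$ equals $e^{j_n\delta}$ up to a bounded multiplicative factor, so $\log R^{(n)}_+=j_n\delta+O(1)$ and hence $h_n(R^{(n)}_+)$ converges to $\beta_n:=\log C_n/B_{n-1}$.

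It remains to compare $\alpha_n$, $\gamma_n$ and $\beta_n$ and to verify that the infimum of $h_n$ converges to $\alpha_n$ and the supremum to $\max(\beta_n,\gamma_n)$. The key identity is
\[
\alpha_n-\gamma_n=\frac{T_n}{B_n}\,(\alpha_n-1),
\]
together with the uniform upper bound $\alpha_n\le 1+o(1)$, which follows from $\log C_n\le B_{n-1}+n\log c_2$ (via $\log D^{(m)}_{j_{m-1}}\le j_{m-1}\delta+\log c_2$). A similar identity, namely $\alpha_n-\beta_n=\log C_n\,(T_n-j_n\delta)/[(B_n-T_n)B_{n-1}]$, handles the comparison with $\beta_n$. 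Splitting the analysis according to whether $T_n$ is bounded (so that $|T_n/B_n|\to 0$) or unbounded (in which case $T_n>0$ eventually, so the sign of $\alpha_n-\gamma_n$ is controlled by $\alpha_n-1\le o(1)$), I conclude in both cases that $h_n(D^{(n+1)}_{j_n})\le\min(h_n(R^{(n)}_-),h_n(R^{(n)}_+))+o(1)$ and that $\alpha_n\le\max(\gamma_n,\beta_n)+o(1)$, which is exactly what the two lemma statements require.

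The main technical obstacle is precisely this final comparison step: while each of the three endpoint values converges to the expected quantity in isolation, the relative orderings of $\alpha_n$, $\gamma_n$ and $\beta_n$ depend on the sizes of $\log C_n$, $T_n$ and $j_n\delta$, which are only partially constrained. The decisive inputs are the two-sided control $c_3\le D^{(m)}_{j_{m-1}}\le c_2 e^{j_{m-1}\delta}$ from Lemma~\ref{lem:D}(c) and the divergence $B_{n-1}/n\to\infty$ coming from the admissibility assumption together with $(a_1\cdots a_n)^{1/n}\to\infty$; without both of these the cross-terms involving $c_7 n$ would not wash out in the limit.
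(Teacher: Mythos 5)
Your argument reaches the same three candidate values ($\alpha_n$ at the junction, $\gamma_n$ at $R^{(n)}_-$, $\beta_n$ at $R^{(n)}_+$) as the paper, but it resolves the key structural question by a different mechanism. The paper forces monotonicity on the first piece by splitting $h_n=h^{(n)}_1+h^{(n)}_2$ there: the extra $-c_8n$ in the numerator of $h^{(n)}_1$ (legitimate because $\log(D^{(1)}_{j_0}\cdots D^{(n+1)}_{j_n})\le (j_0+\cdots+j_n)\delta+(n+1)\log c_2$ by Lemma~\ref{lem:D}(c)) makes $h^{(n)}_1$ genuinely decreasing, while $h^{(n)}_2$ is a uniform $o(1)$; this immediately places the infimum at $D^{(n+1)}_{j_n}$ and the supremum at one of $R^{(n)}_\pm$. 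You instead leave the direction of monotonicity of the linear-fractional first piece undetermined --- rightly so, since the sign of its derivative is that of $\log C_n+T_n-B_n+2c_7n$, which can go either way --- and recover the correct min/max assignment a posteriori via the identities $\alpha_n-\gamma_n=\tfrac{T_n}{B_n}(\alpha_n-1)$ and $\alpha_n-\beta_n=\log C_n\,(T_n-j_n\delta)/[(B_n-T_n)B_{n-1}]$, combined with the two-sided bounds $-o(1)\le\alpha_n,\beta_n,\gamma_n\le 1+o(1)$. This is a valid substitute for the paper's decomposition, uses the same inputs (Lemma~\ref{lem:D}(c) and \eqref{eq:ass-infty}), and is arguably more transparent about why $\alpha_n$ is, up to $o(1)$, the smallest of the three values. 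Do add a word on the case $D^{(n+1)}_{j_n}<R^{(n)}_-$, where the junction leaves the interval (the paper treats it separately); there $T_n=O(1)$, so $\alpha_n-\gamma_n\to 0$ and your scheme still closes.

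One intermediate bound is wrong as stated: the error $h_n(D^{(n+1)}_{j_n})-\alpha_n$ is \emph{not} controlled by a multiple of $n(B_n+|\log C_n|)/(B_n-T_n)^2$. When $T_n$ is close to $j_n\delta$ and $j_n\delta\gg B_{n-1}^2/n$ (which admissibility permits), that quantity blows up even though $B_n-T_n\ge B_{n-1}-O(1)$. The correct elementary manipulation gives
\[
h_n(D^{(n+1)}_{j_n})-\alpha_n=\frac{c_7 n\,(1+\alpha_n)}{B_n-T_n-c_7 n},
\]
which tends to $0$ because $\alpha_n$ is bounded and $(B_n-T_n)/n\to\infty$. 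The analogous ``$n$ times a bounded ratio over the denominator'' form is what you need at $R^{(n)}_-$ and $R^{(n)}_+$ as well. With that correction the proof is complete.
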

\begin{proof}

We can write
\[
h_n(x) = 
\begin{cases}h^{(n)}_1(x) + h^{(n)}_2(x) &\text{if } x \le D^{(n+1)}_{j_n}\\
h^{(n)}_3(x) &\text{if } x > D^{(n+1)}_{j_n}
\end{cases},
\]
where
\begin{align*}
h^{(n)}_1(x) &= 1 +\frac{\log(D^{(1)}_{j_0} \cdots D^{(n+1)}_{j_n}) - (j_0 + \cdots +j_n)\delta - c_8n}{(j_0 + \cdots +j_n)\delta - \log x - c_7 n},\\
h^{(n)}_2(x) &= \frac{(2c_7 + c_8)n}{(j_0 + \cdots +j_n)\delta - \log x - c_7 n}\\
h^{(n)}_3(x) &= \frac{\log(D^{(1)}_{j_0} \cdots D^{(n)}_{j_{n-1}}) + c_7 n}{(j_0 + \cdots +j_n)\delta - \log x - c_7 n} 
\end{align*}
for $x \in [R^{(n)}_-, R^{(n)}_+)$ and a large constant $c_8 > 0$. Let
\[
\varepsilon_n = \sup_{[R^{(n)}_-, R^{(n)}_+)} |h^{(n)}_2|
\]
and note that by \eqref{eq:R>1} and \eqref{eq:ass-infty}, we have $\varepsilon_n\to 0$ as $n \to \infty$. By Lemma~\ref{lem:D}, $h^{(n)}_1$ is decreasing and $h^{(n)}_3$ is increasing, if $c_7$ and $c_8$ are chosen sufficiently large. This together with \eqref{eq:D>R} implies that if $D^{(n+1)}_{j_n} > R^{(n)}_-$, then
\[
\left|\inf_{[R^{(n)}_-, R^{(n)}_+)} h_n - h^{(n)}_1(D^{(n+1)}_{j_n})\right| \le \varepsilon_n, \qquad 
\left|\sup_{[R^{(n)}_-, R^{(n)}_+)} h_n - \max\left(h^{(n)}_1(R^{(n)}_-), h^{(n)}_3(R^{(n)}_+)\right)\right| \le \varepsilon_n,
\]
and if $D^{(n+1)}_{j_n} < R^{(n)}_-$, then
\[
\left|\inf_{[R^{(n)}_-, R^{(n)}_+)} h_n - h^{(n)}_1(R^{(n)}_-)\right| \le  \varepsilon_n, \qquad
\left|\sup_{[R^{(n)}_-, R^{(n)}_+)} h_n - h^{(n)}_3(R^{(n)}_+)\right| \le  \varepsilon_n.
\]
Furthermore, using \eqref{eq:R>1}, \eqref{eq:D>R} and \eqref{eq:ass-infty} we obtain
\begin{align*}
&\left|h^{(n)}_1(D^{(n+1)}_{j_n}) - \frac{\log(D^{(1)}_{j_0} \cdots D^{(n)}_{j_{n-1}})}{(j_0 + \cdots +j_n)\delta - \log D^{(n+1)}_{j_n}}\right| \to 0,\\
&\left|h^{(n)}_1(R^{(n)}_-) - \frac{\log(D^{(1)}_{j_0} \cdots D^{(n+1)}_{j_n})}{(j_0 + \cdots +j_n)\delta}\right| \to 0,\\
&\left|h^{(n)}_3(R^{(n)}_+) - \frac{\log(D^{(1)}_{j_0} \cdots D^{(n)}_{j_{n-1}})}{(j_0 + \cdots +j_{n-1})\delta}\right| \to 0
\end{align*}
and
\[
|h^{(n)}_1(D^{(n+1)}_{j_n}) - h^{(n)}_1(R^{(n)}_-)| \to 0 \qquad\text{if } D^{(n+1)}_{j_n} < R^{(n)}_-
\]
as $n \to \infty$. This proves the lemma.
\end{proof}

\subsection{Conclusion}

By \eqref{lem:JinF}, \eqref{eq:Kinmu}, \eqref{eq:log<} and Lemma~\ref{lem:h}, for every $j_0, k_0 \in \Z$ such that $J \cap K^{(0)}_{j_0,k_0} \ne \emptyset$ and every $z \in J \cap K^{(0)}_{j_0,k_0}$ there exist $j_1, k_1, j_2, k_2, \ldots \in \Z$ with $z  = \lim_{n\to\infty} z_{j_0, k_0, \ldots, j_n, k_n}$ and  
\begin{align*}
\liminf_{r \to 0} \frac{\log \mu(\D(z, r))}{\log r}&\le
1 + \liminf_{n \to \infty}\frac{\log(D^{(1)}_{j_0} \cdots D^{(n)}_{j_{n-1}})}{(j_0 + \cdots +j_n)\delta - \log D^{(n+1)}_{j_n}},\\
\limsup_{r \to 0} \frac{\log \mu(\D(z, r))}{\log r}&\le 
1 + \limsup_{n \to \infty} \frac{\log(D^{(1)}_{j_0} \cdots D^{(n+1)}_{j_n})}{(j_0 + \cdots +j_n)\delta}.
\end{align*}
This together with Lemma~\ref{lem:D}, \eqref{eq:<j<} and \eqref{eq:ass-infty}  implies
\begin{equation}\label{eq:log_mu<}
\begin{aligned}
\liminf_{r \to 0} \frac{\log \mu(\D(z, r))}{\log r}&\le
1 + \liminf_{n \to \infty}\Phi_n(\delta j_0, \ldots,\delta j_n),\\
\limsup_{r \to 0} \frac{\log \mu(\D(z, r))}{\log r}&\le 
1 + \limsup_{n \to \infty} \Psi_n(\delta j_0, \ldots,\delta j_n),
\end{aligned}
\end{equation}
where 
\begin{equation}\label{eq:PhiPsi}
\begin{aligned}
\Phi_n(x_0, \ldots, x_n) &= \frac{\min(\log \Delta_{N+1}, x_0) + \cdots + \min(\log \Delta_{N+n}, x_{n-1})}{x_0 + \cdots +x_n - \min(\log \Delta_{N+n+1}, x_n)},\\
\Psi_n(x_0, \ldots, x_n) &= \frac{\min(\log \Delta_{N+1}, x_0) + \cdots + \min(\log \Delta_{N+n+1}, x_n)}{x_0 + \cdots +x_n}
\end{aligned}
\end{equation}
for $x_0 \in [\log a_N, \log b_N]$, $x_1 \in [\log a_{N+1}, \log b_{N+1}], \ldots$. By the standard dimension estimates (see e.g.~\cite{mattila,PUbook}), \eqref{eq:log_mu<} gives
\begin{align*}
\dim_H J \le 1 + \sup_x\liminf_{n \to \infty}\Phi_n(x_0, \ldots, x_n),\\
\dim_P J \le 1 + \sup_x\limsup_{n \to \infty}\Psi_n(x_0, \ldots, x_n)
\end{align*}
for $x = (x_0, x_1, \ldots) \in [\log a_N, \log b_N] \times [\log a_{N+1}, \log b_{N+1}] \times \cdots$. Together with \eqref{eq:dim}, this proves the upper estimate in Theorem~\ref{thm:B}.

\subsection{General case}

Suppose now that the assumption \eqref{eq:<B<} does not hold. For $n \ge 0$, 
if $A_n < 0$, then let $\alpha_n \in \Z$ be such that 
\[
-e^{(\alpha_n + 1)\delta} \le A_n < - e^{\alpha_n\delta}.
\]
Similarly, if $B_n > 0$, then let $\beta_n \in \Z$ be such that 
\[
e^{\beta_n\delta} \le B_n < e^{(\beta_n + 1)\delta}.
\]
Set $a'_m = a_m$, $b'_m = b_m$ for $1 \le m < N$ and
\[
a'_{N+n} = |\lambda_{N+n}| e^{A'_n}, \qquad b'_{N+n} = |\lambda_{N+n}| e^{B'_n}
\] 
for $ n \ge 0$, where
\begin{align*}
A'_n &= 
\begin{cases}
-e^{(\alpha_n + 3/2)\delta} 
&\text{if } -e^{(\alpha_n + 1)\delta} \le A_n < -e^{\alpha_n\delta} \\
A_n &\text{if } A_n \ge 0
\end{cases},\\
B'_n &= 
\begin{cases}
e^{(\beta_n+ 3/2)\delta} 
&\text{if }  e^{\beta_n\delta} \le B_n < e^{(\beta_n + 1)\delta}\\
B_n &\text{if } B_n \le 0
\end{cases}.
\end{align*}
By definition, 
\[
-\frac{A'_n}{e^{j\delta}} \le e^{-\delta/2} \quad \text{or} \quad -\frac{A'_n}{e^{j\delta}} \ge e^{\delta/2}, \qquad 
\frac{B'_n}{e^{j\delta}} \le e^{-\delta/2} \quad \text{or} \quad \frac{B'_n}{e^{j\delta}} \ge e^{\delta/2}
\]
for every $n \ge 0$, $j \in \Z$, so the condition \eqref{eq:<B<} is satisfied for $A'_n, B'_n$ instead of $A_n, B_n$. Therefore, we can repeat the proof contained in this section, replacing $a_n$ by $a'_n$ and $b_n$ by $b'_n$, respectively. Since
\[
e^{-3\delta/2} A_n \le A'_n \le A_n, \qquad B_n \le B'_n \le e^{3\delta/2} B_n
\]
for every $n \ge 0$, this replacement does not spoil the assumptions of Theorem~\ref{thm:B}. Moreover, the values of $\log D^{(n+1)}_{j_n}$, $n \ge 0$, change at most by an additive constant. Hence, using \eqref{eq:ass-infty}, we see that the right sides of the inequalities in \eqref{eq:log_mu<} do not change, so the upper estimates of the Hausdorff and packing dimension of $I_{\aa'}^{\bb'}(E_\ul)$ for $\aa' = (a'_n)_{n=1}^\infty$, $\bb' = (b'_n)_{n=1}^\infty$ are the same as for $I_{\aa}^{\bb}(E_\ul)$. But since $a_n' \le a_n$ and $b_n' \ge b_n$, we have $I_{\aa}^{\bb}(E_\ul) \subset I_{\aa'}^{\bb'}(E_\ul)$, so the estimates are valid also for $I_{\aa}^{\bb}(E_\ul)$.

\section{Proof of Theorem~\ref{thm:B} -- estimate from below}\label{sec:proof-B-below}

\subsection{\boldmath Construction of the measure $\tilde \mu$}

By \eqref{eq:delta<}, we can find $j_0, k_0 \in 2\Z$ such that $K^{(0)}_{j_0,k_0} \subset S_0$. Define families $\tilde \FF^{(n)}$, $n \ge 0$, by
\[
\tilde \FF^{(0)} = \{\tilde K_{j_0, k_0}\} \qquad \text{for} \quad \tilde K_{j_0, k_0} = K^{(0)}_{j_0, k_0}
\]
and 
\begin{align*}
\tilde\FF^{(n)} = \{&\tilde K_{j_0, k_0, \ldots, j_n, k_n} = g^{(0)}_{k_0} \circ \cdots \circ g^{(n-1)}_{k_{n-1}}(K^{(n)}_{j_n, k_n}):\\
&K^{(1)}_{j_1,k_1} \in \tilde \KK^{(1)}_{j_0}, \ldots, K^{(n)}_{j_n,k_n} \in \tilde \KK^{(n)}_{j_{n-1}}, \; j_1, \ldots, j_n \in 2\Z, \; k_1, \ldots, k_n \in 2\Z\}
\end{align*}
for $n \ge 1$. Note that here we consider only even values of $j_0, k_0, j_1, k_1, \ldots$. Obviously, for every $\tilde K_{j_0, k_0, \ldots, j_n, k_n} \in \tilde \FF^{(n)}$ and $j_{n+1}, k_{n+1} \in 2\Z$, 
\begin{equation}
\label{eq:cap'}
\begin{aligned}
&\text{if} \qquad \tilde K_{j_0, k_0, \ldots, j_{n+1}, k_{n+1}} \in \tilde \FF^{(n+1)},\\
&\text{then}\qquad  K^{(n+1)}_{j_{n+1},k_{n+1}} \in \tilde \KK^{(n+1)}_{j_n, k_n} \qquad \text{and} \qquad\tilde K_{j_0, k_0, \ldots, j_{n+1}, k_{n+1}} \subset \tilde K_{j_0, k_0, \ldots, j_n, k_n}.
\end{aligned}
\end{equation}

Moreover, the sets $\overline{\tilde K_{j_0, k_0, \ldots, j_n, k_n}}$ are pairwise disjoint for given $n$. 
Let
\[
\tilde K_\infty = \overline{\bigcap_{n=0}^\infty \bigcup\tilde \FF^{(n)}} = \bigcap_{n=0}^\infty \bigcup\{\overline{\tilde K_{j_0, k_0, \ldots, j_n, k_n}} : \tilde K_{j_0, k_0, \ldots, j_n, k_n} \in \tilde \FF^{(n)}\}.
\]
By definition, we have 
\begin{equation}\label{eq:tildeFinJ}
\tilde K_\infty \subset \overline{J \cap K^{(0)}_{j_0,k_0}} = J \cap \overline{K^{(0)}_{j_0,k_0}},
\end{equation}
since $J$ is closed. 

For $\tilde K_{j_0, k_0, \ldots, j_n, k_n} \in \tilde \FF^{(n)}$ let
\[
\tilde N_{j_0, k_0, \ldots, j_n, k_n} = \#\{(j_{n+1}, k_{n+1}) :\tilde K_{j_0, k_0, \ldots, j_{n+1}, k_{n+1}} \in \tilde \FF^{(n+1)}\}.
\]
By \eqref{eq:<j<}, \eqref{eq:ass-a>} and Lemma~\ref{lem:D},
\begin{equation}\label{eq:tildeN>}
\tilde N_{j_0, k_0, \ldots, j_n, k_n}  \ge \tilde c_1 D^{(n+1)}_{j_n} e^{j_n \delta} > 0
\end{equation}
for a constant $\tilde c_1 > 0$.

For every $n \ge 0$ and  $\tilde K_{j_0, k_0, \ldots, j_n, k_n} \in \tilde \FF^{(n)}$ choose a point 
\[
\tilde z_{j_0, k_0, \ldots, j_n, k_n} \in \tilde K_{j_0, k_0, \ldots, j_n, k_n}
\] 
and define a sequence of Borel probability measures $\tilde \mu_n$, $n \ge 0$ setting
\begin{align*}
\tilde \mu_0 &= \nu_{\tilde z_{j_0, k_0}},\\
\tilde \mu_{n+1} &= \sum_{\tilde K_{j_0, k_0, \ldots, j_n, k_n} \in \tilde \FF^{(n)}} \sum_{(j_{n+1}, k_{n+1}) : \tilde K_{j_0, k_0, \ldots, j_{n+1}, k_{n+1}} \in \tilde \FF^{(n+1)}} 
\frac{\nu_{\tilde z_{j_0, k_0, \ldots, j_{n+1}, k_{n+1}}}}
{\tilde N_{j_0, k_0} \cdots \tilde N_{j_0, k_0, \ldots, j_n, k_n}}.
\end{align*}
By definition, if $\tilde K_{j_0, k_0, \ldots, j_n, k_n} \in \tilde\FF^{(n)}$, then
\begin{equation}\label{eq:mu_n+1'}
\tilde \mu_m(\tilde K_{j_0, k_0, \ldots, j_n, k_n}) = \tilde \mu_n(\tilde K_{j_0, k_0, \ldots, j_n, k_n}) = \frac{1}
{\tilde N_{j_0, k_0} \cdots \tilde N_{j_0, k_0, \ldots, j_{n-1}, k_{n-1}}}
\end{equation}
for every $m \ge n$. Hence, taking a weak limit along a subsequent of $\tilde \mu_n$, we find a Borel probability measure $\tilde \mu$, such that
\begin{equation}\label{eq:Kinmu'}
\supp \tilde \mu  \subset \tilde K_\infty
\end{equation}
and 
\[
\tilde\mu(\tilde K_{j_0, k_0, \ldots, j_n, k_n}) \le \tilde\mu_n(\tilde K_{j_0, k_0, \ldots, j_n, k_n})
\]
for $\tilde K_{j_0, k_0, \ldots, j_n, k_n} \in \tilde \FF^{(n)}$, so by \eqref{eq:tildeN>} and \eqref{eq:mu_n+1'},
\begin{equation}\label{eq:mu<}
\tilde \mu(\tilde K_{j_0, k_0, \ldots, j_n, k_n}) \le \frac{\tilde c_1^{-n}}{D^{(1)}_{j_0} \cdots D^{(n)}_{j_{n-1}} e^{(j_0 + \cdots +j_{n-1})\delta}}.
\end{equation}

\subsection{\boldmath Estimate of the local dimension of $\tilde\mu$}

Take a point $z \in \tilde K_\infty$. Then there exist $j_1, k_1, j_2, k_2, \ldots \in 2\Z$ such that $\tilde K_{j_0, k_0, \ldots, j_n, k_n} \in \tilde \FF^{(n)}$ for every $n \ge 0$,
\[
\tilde K_{j_0, k_0, \ldots, j_n, k_n} \subset \cdots \subset \tilde K_{j_0, k_0}.
\]
Set
\[
\tilde d_n = \diam \tilde K_{j_0, k_0, \ldots, j_n, k_n}, \qquad \tilde z_n = \tilde z_{j_0, k_0, \ldots, j_n, k_n}.
\]
In the same way as for \eqref{eq:diamK'}, we show
\begin{equation}\label{eq:diamK''}
\tilde d_{n+1} < \frac{\tilde d_n}{Q},
\end{equation}
where $Q > 0$ is a constant, which can be chosen arbitrarily large, provided $a$ is big enough. In particular, this implies that $z$ is the unique point of
$\bigcap_{n=0}^\infty \overline{\tilde K_{j_0, k_0, \ldots, j_n, k_n}}$.
Since $z, \tilde z_n \in \overline{\tilde K_{j_0, k_0, \ldots, j_n, k_n}}$, we have 
\begin{equation}\label{eq:z-u'}
|z - \tilde z_n| \le \tilde d_n.
\end{equation}
Let
\[
\tilde r_n = \frac{\tilde d_n}{\tilde C} 
\]
for a large constant $\tilde C > 0$. By \eqref{eq:diamK''}, the sequence $\tilde r_n$ is strictly decreasing to $0$. To estimate $\tilde\mu(\D(z, r))$ for a small $r$, take $n$ such that 
\[
\tilde r_{n+1} \le r < \tilde r_n,
\]
let
\[
\tilde R = \frac{\sqrt{\tilde C}r}{\tilde d_{n+1}}
\]
and note that if $r$ varies in $[\tilde r_{n+1}, \tilde r_n)$, then $\tilde R$ varies in $[\tilde R_-^{(n)}, \tilde R_+^{(n)})$ for
\[
\tilde R_-^{(n)} = \frac{1}{\sqrt{\tilde C}}, \qquad
\tilde R_+^{(n)} = \frac{1}{\sqrt{\tilde C}} \frac{\tilde d_n}{\tilde d_{n+1}}.
\]
By Lemma~\ref{lem:branch}, we have
\begin{equation}\label{eq:<r<'}
\frac{1}{\tilde c_2^{n+1}\sqrt{\tilde C}} \frac{\tilde R}{e^{(j_0 + \cdots +j_n)\delta}} < r < \frac{\tilde c_2^{n+1}}{\sqrt{\tilde C}} \frac{\tilde R}{e^{(j_0 + \cdots +j_n)\delta}}
\end{equation}
and
\begin{equation}\label{eq:R>1'}
\frac{e^{j_n\delta}}{\tilde c_2\sqrt{\tilde C}}  \le \tilde R_+^{(n)} \leq  \frac{\tilde c_2}{\sqrt{\tilde C}} e^{j_n\delta}
\end{equation}
for some constant $\tilde c_2 > 0$. Enlarging additionally $\tilde C$, by Lemma~\ref{lem:D} and \eqref{eq:R>1'} we can assume 
\begin{equation}\label{eq:D>R'}
\tilde R^{(n)}_- < D^{(n+1)}_{j_n} < \tilde c_3 \sqrt{\tilde C} \tilde R^{(n)}_+
\end{equation}
for some constant $\tilde c_3 > 0$. 

Let
\[
\tilde w = E_{\lambda_{N+n}} \circ \cdots \circ E_{\lambda_N}(\tilde z_{n+1}).
\]
Then $\tilde w \in K^{(n+1)}_{j_{n+1},k_{n+1}} \in \tilde\KK^{(n+1)}_{j_n, k_n}$.
Take $j'_1, \ldots, j'_{n+1} \in 2\Z$, $k'_1, \ldots, k'_{n+1} \in 2\Z$ such that $\tilde K_{j_0, k_0, j'_1, k'_1,  \ldots, j'_{n+1}, k'_{n+1}}\in \tilde\FF^{(n+1)}$ and $(j'_1, k'_1,  \ldots, j'_{n+1}, k'_{n+1}) \ne (j_1, k_1,  \ldots, j_{n+1}, k_{n+1})$. Let 
\[
m = \min(\{s \in [1, n+1]: (j'_s, k'_s) \neq (j_s, k_s)\}).
\]
We have $\dist(K^{(m)}_{j_m, k_m}, K^{(m)}_{j'_m, k'_m}) = \delta$, so by Lemma~\ref{lem:branch} and \eqref{eq:cap'}, 
\[
\dist(\tilde z_{n+1}, \tilde K_{j_0, k_0, j'_1, k'_1,  \ldots, j'_{n+1}, k'_{n+1}}) \ge \dist(\tilde K_{j_0, k_0, j_1, k_1,  \ldots, j_m, k_m}, \tilde K_{j_0, k_0, j'_1, k'_1,  \ldots, j'_m, k'_m}) > \tilde c_4 \tilde d_m 
\]
for some constant $\tilde c_4 > 0$. Hence, if $m \le n$, then by \eqref{eq:diamK''} and \eqref{eq:z-u'},
\[
\dist(z, \tilde K_{j_0, k_0, j'_1, k'_1,  \ldots, j'_{n+1}, k'_{n+1}}) \ge 
\Big(\tilde c_4 - \frac{1}{Q}\Big) \tilde d_n > \frac{\tilde c_4 \tilde C r}{2} > r,
\]
if $\tilde C$ and $Q$ are chosen sufficiently large. Consequently, if $\tilde K_{j_0, k_0, j'_1, k'_1,  \ldots, j'_{n+1}, k'_{n+1}}$ intersects $\D(\tilde z, r)$, then $(j'_1, k'_1,  \ldots, j'_n, k'_n) = (j_1, k_1,  \ldots, j_n, k_n)$.  Furthermore, if $\D(\tilde w, \tilde R)$ does not intersect $K^{(n+1)}_{j'_{n+1}, k'_{n+1}}$, then by Lemma~\ref{lem:branch}, 
\[
\dist (\tilde z_{n+1}, \tilde K_{j_0, k_0, j'_1, k'_1,  \ldots, j'_{n+1}, k'_{n+1}}) \ge \tilde c_5 \tilde R \tilde d_{n+1} 
\]
for some constant $\tilde c_5 > 0$, so by \eqref{eq:z-u'},
\[
\dist(z, \tilde K_{j_0, k_0, j'_1, k'_1,  \ldots, j'_{n+1}, k'_{n+1}}) \ge 
(\tilde c_5 \tilde R - 1)\tilde d_{n+1} = \tilde c_5 \sqrt{\tilde C}r - \frac{\tilde r_{n+1}}{\tilde C} \ge \Big(c_5 \sqrt{\tilde C} - \frac{1}{\tilde C}\Big) r > r
\]
provided $\tilde C$ is chosen sufficiently large. We conclude that if $\tilde K_{j_0, k_0, j'_1, k'_1,  \ldots, j'_{n+1}, k'_{n+1}}\in \tilde\FF^{(n+1)}$
and $\tilde K_{j_0, k_0, j'_1, k'_1,  \ldots, j'_{n+1}, k'_{n+1}}$ intersects $\D(\tilde z, r)$, then $(j'_1, k'_1,  \ldots, j'_n, k'_n) = (j_1, k_1,  \ldots, j_n, k_n)$ and $\D(\tilde w, \tilde R)$ intersects $K^{(n+1)}_{j'_{n+1}, k'_{n+1}}$. Note also that in this case we have $K^{(n+1)}_{j'_{n+1}, k'_{n+1}} \in \tilde \KK^{(n+1)}_{j_n, k_n}$, which follows from \eqref{eq:R>1'}, if $\tilde C$ is chosen sufficiently large. Since by Lemma~\ref{lem:D}, the set $\bigcup\tilde \KK^{(n+1)}_{j_n, k_n}$ is contained in a vertical strip of width $D^{(n+1)}_{j_n}$ passing through $\tilde w$, the disc $\D(\tilde w, \tilde R)$ intersects at most $\tilde M$ sets $K^{(n+1)}_{j'_{n+1}, k'_{n+1}} \in \tilde \KK_{j_n,k_n}^{(n+1)}$, where
\[
\tilde M =
\begin{cases}
\tilde c_6 \tilde R^2& \text{if } \tilde R \le D^{(n+1)}_{j_n}\\
\tilde c_6 D^{(n+1)}_{j_n} \tilde R& \text{if } \tilde R > D^{(n+1)}_{j_n}
\end{cases}
\]
for some constant $\tilde c_6 > 0$. By \eqref{eq:mu<},
\[
\tilde \mu(\D(\tilde z, r)) \ge 
\begin{cases} 
\frac{\tilde c_6\tilde c_1^{-(n+1)} \tilde R^2}{D^{(1)}_{j_0} \cdots D^{(n+1)}_{j_n}e^{(j_0 + \cdots +j_n)\delta}}& \text{if } \tilde R \le D^{(n+1)}_{j_n}\\
\frac{\tilde c_6 c_1^{-(n+1)} \tilde R}{D^{(1)}_{j_0} \cdots D^{(n)}_{j_{n-1}} e^{(j_0 + \cdots +j_n)\delta}}& \text{if } \tilde R > D^{(n+1)}_{j_n}
\end{cases},
\]
so by \eqref{eq:<r<'},
\begin{equation}\label{eq:log<'}
\frac{\log \tilde \mu(\D(z, r))}{\log r}\le 1 + \tilde h_n(\tilde R),
\end{equation}
where
\[
\tilde h_n(x) = 
\begin{cases}\frac{\log(D^{(1)}_{j_0} \cdots D^{(n+1)}_{j_n}) - \log 
x+ \tilde c_7 n}{(j_0 + \cdots +j_n)\delta - \log x - \tilde c_7 n}& \text{if } x \le D^{(n+1)}_{j_n}\\
\frac{\log(D^{(1)}_{j_0} \cdots D^{(n)}_{j_{n-1}}) + \tilde c_7 n}{(j_0 + \cdots +j_n)\delta - \log x - \tilde c_7 n}& \text{if } x > D^{(n+1)}_{j_n} 
\end{cases}
\]
for $x \in [\tilde R^{(n)}_-, \tilde R^{(n)}_+)$ and some constant $\tilde c_7 > 0$. 
Note that $j_0, j_1, \ldots$ satisfy \eqref{eq:ass-infty}.
In the same way as for Lemma~\ref{lem:h}, using \eqref{eq:D>R'} instead of \eqref{eq:D>R}, we prove the following.

\begin{lem}\label{lem:h'}
\begin{align*}
&\lim_{n\to\infty}\left|\inf_{[R^{(n)}_-, R^{(n)}_+)} \tilde h_n -  \frac{\log(D^{(1)}_{j_0} \cdots D^{(n)}_{j_{n-1}})}{(j_0 + \cdots +j_n)\delta - \log D^{(n+1)}_{j_n}}\right| = 0,\\
&\lim_{n\to\infty}\left|\sup_{[R^{(n)}_-, R^{(n)}_+)} \tilde h_n - \max\left(\frac{\log(D^{(1)}_{j_0} \cdots D^{(n)}_{j_{n-1}})}{(j_0 + \cdots +j_{n-1})\delta}, \frac{\log(D^{(1)}_{j_0} \cdots D^{(n+1)}_{j_n})}{(j_0 + \cdots +j_n)\delta}\right)\right| = 0.
\end{align*}
\end{lem}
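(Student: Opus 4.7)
The plan is to repeat, \emph{mutatis mutandis}, the argument used for Lemma~\ref{lem:h}, since $\tilde h_n$ has exactly the same algebraic form as $h_n$; only the range $[\tilde R_-^{(n)},\tilde R_+^{(n)})$ of the variable and the localization of $D^{(n+1)}_{j_n}$ provided by \eqref{eq:D>R'} differ from their untilded counterparts. As before, I would split
\[
\tilde h_n(x) = \begin{cases}\tilde h_n^{(1)}(x)+\tilde h_n^{(2)}(x) & \text{if } x\le D^{(n+1)}_{j_n},\\ \tilde h_n^{(3)}(x) & \text{if } x> D^{(n+1)}_{j_n},\end{cases}
\]
with
\begin{align*}
\tilde h_n^{(1)}(x) &= 1+\frac{\log(D^{(1)}_{j_0}\cdots D^{(n+1)}_{j_n}) - (j_0+\cdots+j_n)\delta - \tilde c_8 n}{(j_0+\cdots+j_n)\delta - \log x - \tilde c_7 n},\\
\tilde h_n^{(2)}(x) &= \frac{(2\tilde c_7 + \tilde c_8)\,n}{(j_0+\cdots+j_n)\delta - \log x - \tilde c_7 n},\\
\tilde h_n^{(3)}(x) &= \frac{\log(D^{(1)}_{j_0}\cdots D^{(n)}_{j_{n-1}}) + \tilde c_7 n}{(j_0+\cdots+j_n)\delta - \log x - \tilde c_7 n},
\end{align*}
for a large constant $\tilde c_8>0$.

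Next, I would verify the three structural facts used in the previous proof. By \eqref{eq:R>1'} and \eqref{eq:ass-infty}, the denominators are positive for all large $n$, uniformly in $x\in[\tilde R_-^{(n)},\tilde R_+^{(n)})$, so $\tilde h_n$ is well defined. Estimating $\log x$ through \eqref{eq:R>1'} yields $\sup_{[\tilde R_-^{(n)},\tilde R_+^{(n)})}|\tilde h_n^{(2)}|\to 0$. After enlarging $\tilde c_7,\tilde c_8$ if necessary (which is harmless since these are arbitrary large constants coming from Lemma~\ref{lem:branch}), monotonicity in $x$ gives that $\tilde h_n^{(1)}$ is decreasing and $\tilde h_n^{(3)}$ is increasing on the range of interest.

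I would then use \eqref{eq:D>R'}, which plays exactly the role of \eqref{eq:D>R}: it pins $D^{(n+1)}_{j_n}$ strictly above $\tilde R_-^{(n)}$ and within a bounded multiplicative factor $\tilde c_3\sqrt{\tilde C}$ of $\tilde R_+^{(n)}$. Hence if $D^{(n+1)}_{j_n}\le\tilde R_+^{(n)}$, the infimum of $\tilde h_n$ is attained, up to $O(\varepsilon_n)$, at $x=D^{(n+1)}_{j_n}$ (where the two branches meet) and the supremum at one of the endpoints; if $D^{(n+1)}_{j_n}>\tilde R_+^{(n)}$, only the first branch is relevant on the whole range, and the two-sided comparability from \eqref{eq:D>R'} shows that $\tilde h_n^{(1)}(\tilde R_+^{(n)})$ differs from $\tilde h_n^{(1)}(D^{(n+1)}_{j_n})$ by an additive $O(1/n)$ after using \eqref{eq:ass-infty} in the denominator.

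Finally, I would compute the three limits
\[
\tilde h_n^{(1)}(D^{(n+1)}_{j_n})-\frac{\log(D^{(1)}_{j_0}\cdots D^{(n)}_{j_{n-1}})}{(j_0+\cdots+j_n)\delta-\log D^{(n+1)}_{j_n}}\to 0,
\]
\[
\tilde h_n^{(1)}(\tilde R_-^{(n)})-\frac{\log(D^{(1)}_{j_0}\cdots D^{(n+1)}_{j_n})}{(j_0+\cdots+j_n)\delta}\to 0,\qquad \tilde h_n^{(3)}(\tilde R_+^{(n)})-\frac{\log(D^{(1)}_{j_0}\cdots D^{(n)}_{j_{n-1}})}{(j_0+\cdots+j_{n-1})\delta}\to 0,
\]
all of which follow from \eqref{eq:R>1'}, \eqref{eq:D>R'}, and the fact that $(j_0+\cdots+j_n)/n\to\infty$ (\eqref{eq:ass-infty}), which swallows the additive constants $\tilde c_7 n$, $\tilde c_8 n$. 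Combining the monotonicity picture with these asymptotics delivers the two claimed formulas. The only place that could cause discomfort is the boundary case $D^{(n+1)}_{j_n}>\tilde R_+^{(n)}$, but as noted above this is handled uniformly by the two-sided bound in \eqref{eq:D>R'}; everything else is an essentially verbatim transcription of the proof of Lemma~\ref{lem:h}.
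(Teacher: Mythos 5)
Your proposal is correct and follows essentially the same route as the paper, which proves Lemma~\ref{lem:h'} simply by declaring it ``the same as Lemma~\ref{lem:h}, using \eqref{eq:D>R'} instead of \eqref{eq:D>R}.'' You correctly identify the one genuine difference — that \eqref{eq:D>R'} pins $D^{(n+1)}_{j_n}$ above $\tilde R^{(n)}_-$ but only within a bounded factor of $\tilde R^{(n)}_+$, so the case analysis flips relative to Lemma~\ref{lem:h} — and your treatment of the case $D^{(n+1)}_{j_n}>\tilde R^{(n)}_+$ (where $\log D^{(n+1)}_{j_n}=\log\tilde R^{(n)}_+ + O(1)$ makes the two values of $\tilde h^{(n)}_1$ asymptotically equal) is exactly what the paper's cross-reference implicitly requires.
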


\subsection{Conclusion}

By \eqref{eq:tildeFinJ}, \eqref{eq:Kinmu'}, \eqref{eq:log<'} and Lemma~\ref{lem:h'}, we can find $j_0, k_0 \in 2\Z$ such that for $\tilde \mu$-almost every $z \in J \cap K^{(0)}_{j_0,k_0}$ there exist $j_1, k_1, j_2, k_2, \ldots \in 2\Z$ with $z  = \lim_{n\to\infty} z_{j_0, k_0, \ldots, j_n, k_n}$ and  
\begin{align*}
\liminf_{r \to 0} \frac{\log \tilde \mu(\D(z, r))}{\log r}&\ge
1 + \liminf_{n \to \infty}\frac{\log(D^{(1)}_{j_0} \cdots D^{(n)}_{j_{n-1}})}{(j_0 + \cdots +j_n)\delta - \log D^{(n+1)}_{j_n}},\\
\limsup_{r \to 0} \frac{\log \tilde \mu(\D(z, r))}{\log r}&\ge 
1 + \limsup_{n \to \infty} \frac{\log(D^{(1)}_{j_0} \cdots D^{(n+1)}_{j_n})}{(j_0 + \cdots +j_n)\delta}.
\end{align*}
This together with Lemma~\ref{lem:D}, \eqref{eq:<j<} and \eqref{eq:ass-infty}  implies
\begin{equation}\label{eq:log_mu>}
\begin{aligned}
\liminf_{r \to 0} \frac{\log\tilde  \mu(\D(z, r))}{\log r}&\ge
1 + \liminf_{n \to \infty}\Phi_n(\delta j_0, \ldots,\delta j_n),\\
\limsup_{r \to 0} \frac{\log\tilde  \mu(\D(z, r))}{\log r}&\ge 
1 + \limsup_{n \to \infty} \Psi_n(\delta j_0, \ldots,\delta j_n),
\end{aligned}
\end{equation}
for $\Phi, \Psi$ defined in \eqref{eq:PhiPsi}. Again, by the standard dimension estimates, \eqref{eq:log_mu>} shows 
\begin{align*}
\dim_H J \ge 1 + \inf_x\liminf_{n \to \infty}\Phi_n(x_0, \ldots, x_n),\\
\dim_P J \ge 1 + \inf_x\limsup_{n \to \infty}\Psi_n(x_0, \ldots, x_n)
\end{align*}
for $x = (x_0, x_1, \ldots) \in [\log a_N, \log b_N] \times [\log a_{N+1}, \log b_{N+1}] \times \cdots$. Together with \eqref{eq:dim}, this proves the lower estimate in Theorem~\ref{thm:B}.

\bibliography{slow}

\end{document}